\newcommand{\kdot}{{{\,\begin{picture}(1,1)(-1,-2)\circle*{2}\end{picture}\,}}}
\begin{document}
\title{Higher Du Bois and Higher Rational Pairs}
\author{Haoming Ning, Brian Nugent}
\date{\today}

\begin{abstract}
We extend the notions of higher Du Bois and higher rational singularities to pairs in the sense of the minimal model program. We extend numerous results to these higher pairs, including Bertini type theorems, stability under finite maps and that m-rational pairs are m-Du Bois. We prove these using a generalized Kov\'acs--Schwede-type injectivity theorem for pairs, the main technical result of this paper.
\end{abstract}

\maketitle

\section{Introduction}

Rational singularities \cite{Art66} and Du Bois singularities \cite{Ste83} are among the most important singularities studied in algebraic geometry. A key reason is that the cohomological behavior of varieties with rational (resp. Du Bois) singularities is very similar to smooth varieties (resp. normal crossing varieties). Of their many applications, these two classes serve as powerful tools to study the singularities that arise naturally in the minimal model program. A principle of the minimal model program is that one should study singularities via pairs \cite{Kol97}, which allows freedom in applications and inductive arguments. Along this line, rational singularities were extended to pairs in \cite{Kol13, ST08} and Du Bois to pairs in \cite{Kov11}. 

Recently, driven by developments in Hodge theoretic methods, there has been substantial interest in studying the higher analogs of rational and Du Bois singularities (see \cite{MOPW21, JKSY21, FL22, MP25, SVV23, Kov25}). 

The goal of this paper is to unify these two generalizations. We propose the notion of higher rational and Du Bois pairs, in the sense of the minimal model program, and show that various results characteristic of rational and Du Bois singularities generalize to pairs. 

% The goal of this paper is to generalize the notion of higher rational and Du Bois singularities to pairs. Rational and Du Bois singularities have become central in complex algebraic geometry because of their relation to the singularities in the minimal model program. Recently, there has been a lot of interest in their higher analogues (see \cite{MOPW21}, \cite{FL22}, \cite{SVV23}, \cite{PSV24}).

A variety has strict-$m$-Du Bois singularities if the natural map
\[
\Omega_X^p \rightarrow \uOmega_X^p
\]
from the sheaf of K\"ahler differentials to the Du Bois complex (cf. \ref{notation-pair}) is a quasi-isomorphism for all $p \le m$. This definition works well if $X$ is lci but is too restrictive in general. To resolve this, multiple candidates for higher Du Bois singularities were studied in \cite{SVV23} and \cite{Kov25}, to isolate contributions due to bad behavior of K\"ahler differentials in general.
%namely pre $m$-Du Bois, weakly $m$-Du Bois and $m$-Du Bois singularities. 
We generalize each of these notions, as well as their rational analogues, to the pair case, in Definitions \ref{def-sings} and \ref{def-rat-sings}.

A main result of this paper is a generalized Kov\'acs--Schewede-type injectivity theorem in the sense of \cite{Kov16a} and \cite{Kov16b} for the pair case. The generalization of the injectivity theorem to the higher graded pieces of the Du Bois complex was proven in \cite{MP25} assuming $X$ is lci and in \cite{PSV24} assuming $X$ has isolated singularities, and was proven in \cite{Kov25} in general. We extend these recent results to the pair case.

\begin{mythm}{\ref{inj-thm}}
Let $(X, \Sigma)$ be a pair with pre-$(m-1)$-Du Bois singularities (cf. Definition \ref{def-sings}). Then for each $p \le m$, the natural map $\bbD_X(\uOmega_{X,\Sigma}^p) \rightarrow \bbD_X(h^0(\uOmega_{X,\Sigma}^p))$ induces injections on cohomology sheaves,

$$ h^i( \bbD_X(\ou)) \hookrightarrow h^i (\bbD_X(h^0(\uOmega_{X,\Sigma}^p)) $$
for all $i$, where $\bbD_X(-) = R\HOM_X(-, \omega_X^\kdot)[-n]$ is the shifted Grothendieck duality functor.% and $\tOmega_{X,\Sigma}^p = h^0(\uOmega_{X,\Sigma}^p)$.
\end{mythm}

Note that if $m = 0$ we obtain \cite[Theorem B]{Kov16b} and if $\Sigma = 0$, we obtain \cite[Theorem 1.1]{Kov25}.

We use this to show that a number of results generalize to the pair setting. In particular, we prove a splitting criterion for higher Du Bois pairs and use this to show $m$-rational implies $m$-Du Bois for pairs. This is a generalization of \cite[Theorem B]{SVV23} and \cite[Theorem 1.5]{Kov25}.

\begin{mythm}{\ref{splits}}
Let $(X, \Sigma)$ be a complex pair and assume the natural map $h^0(\uOmega^p_{X, \Sigma}) \rightarrow \ou$ admits a left inverse for all $p \le m$. Then $(X, \Sigma)$ is pre-$m$-Du Bois.
\end{mythm}

\begin{mythm}{\ref{pre-Rat-pre-DB}}
Let $(X,\Sigma)$ be a pre-$m$-rational pair. Assume that $X$ has rational singularities. Then $(X,\Sigma)$ is pre-$m$-Du Bois.
\end{mythm}

The injectivity theorem also implies $m$-Du Bois pairs are stable under finite surjective maps, through a generalized version of the splitting in \cite{kim2025traceduboiscomplex}.

\begin{mythm}{\ref{cor-finite-map}}
    Let $(X, \Sigma)$ be a reduced pair with $X$ normal and let $f: Y \rightarrow X$ be a surjective finite map. If $(Y, f^{-1} \Sigma)$ is pre-$m$-Du Bois then $(X, \Sigma)$ is pre-$m$-Du Bois.
\end{mythm}

We also show that these new notions behave well under taking general hyperplane sections, generalizing \cite[Theorem A]{SVV23}.

\begin{mythm}{\ref{thm-hyperplane}}
If $(X,\Sigma)$ has pre-$m$-Du Bois (resp. pre-$m$-rational) singularities, and if $H\subset X$ is a general hyperplane section, then $(H, \Sigma\cap H)$ also has pre-$m$-Du Bois (resp. pre-$m$-rational) singularities.
\end{mythm}

We use this to show the same holds for the other notions of $m$-rational and $m$-Du Bois, under mild additional assumptions, see Corollary \ref{cor-hyperplane}.

\vspace{.1in}

{\bf Acknowledgment} Both authors are partially supported by NSF grant DMS-2100389. The second author was partially supported by NSF research grant DMS-2301374 and by a grant from the Simons Foundation SFI-MPS-MOV-00006719-07. We would like to thank S\'andor Kov\'acs and Jakub Witaszek for helpful discussions and suggestions related to this project.

% \newpage

\section{Preliminaries}\label{section-prelim}

\begin{definition}\label{def-pair}
A \emph{reduced pair} $(X, \Sigma)$ over $\bbC$ is a reduced scheme $X$ of finite type over $\bbC$ and a reduced, closed subscheme $\Sigma\subseteq X$. We say that $(X, \Sigma)$ is a \emph{pure-dimensional} pair if both $X$ and $\Sigma$ are pure-dimensional.
\end{definition}

\begin{notation}\label{notation-pair}
Let $(X, \Sigma)$ be a reduced pair over $\bbC$. We will denote $n=\dim X$ and if $(X,\Sigma)$ is pure-dimensional, we denote $c=\codim_X(\Sigma)$. Denote
\begin{itemize}
    \item $\uOmega^\kdot_X\in D_{\rm filt}(X)$ the Du Bois complex of $X$;

    \item $\uOmega^\kdot_{X,\Sigma}\in D_{\rm filt}(X)$ the Du Bois complex of the pair $(X,\Sigma)$;

    \item $\uOmega^\kdot_X(\log \Sigma)\in D_{\rm filt}(X)$ the logarithmic Du Bois complex of the pair $(X, \Sigma)$.
\end{itemize}
Denote their associated graded quotient in degree $p$ respectively by
\begin{align*}
\uOmega^p_X = \Gr^p_{\rm filt} \uOmega^\kdot_X[p], \quad
\uOmega^p_{X,\Sigma} = \Gr^p_{\rm filt} \uOmega^\kdot_{X,\Sigma}[p], \quad
\uOmega^p_X(\log \Sigma) = \Gr^p_{\rm filt} \uOmega^\kdot_X(\log \Sigma)[p]. \quad
\end{align*}
See \cite[\S 6]{Kol13} for more detail about the Du Bois complex of a pair and \cite{DB81, Kov25} about the log Du Bois complex. The following uses the notation of \cite[\S 3]{Kov25}. Specifically, their respective $0$-th cohomology sheaf are denoted by

\begin{align*}
\tOmega^p_X = h^0(\uOmega^p_X), \quad
\tOmega^p_{X,\Sigma} = h^0(\uOmega^p_{X,\Sigma}), \quad
\tOmega^p_X(\log \Sigma) = h^0(\uOmega^p_X(\log \Sigma)). \\
\end{align*}
And their associated filtration and cofiltration complex are denoted by

\begin{align*}
\uf^p_X = F^p\uOmega^\kdot_X,& \quad
\uf_p^X = \Cone(\uf^{p+1}_X\to \uOmega^\kdot_X),\\
\uf^p_{X, \Sigma} =  F^p\uOmega^\kdot_{X,\Sigma},& \quad
\uf_p^{X,\Sigma} = \Cone(\uf^{p+1}_{X,\Sigma}\to \uOmega^\kdot_{X,\Sigma}).
\end{align*}
The sheaves $\tOmega^p_X$ and $\tOmega^p_{X, \Sigma}$ form a complex with the ``filtration b\^ete". We denote analogously

\begin{align*}
\tf^p_X = F^p\tOmega^\kdot_X,& \quad
\tf_p^X = \Cone(\tf^{p+1}_X\to \tOmega^\kdot_X),\\
\tf^p_{X, \Sigma} =  F^p\tOmega^\kdot_{X,\Sigma},& \quad
\tf_p^{X,\Sigma} = \Cone(\tf^{p+1}_{X,\Sigma}\to \tOmega^\kdot_{X,\Sigma}).
\end{align*}

We denote by $\bbD_X(-)$ the shifted Grothendieck duality functor $R\HOM(-, \omega_X^\kdot)[-n]$.

We will reserve the notation $\pi: \widetilde{X} \to X$ for a strong log resolution of the pair $(X, \Sigma)$ with reduced exceptional divisor $E\subset \widetilde{X}$, and $\widetilde{\Sigma}=\pi^{-1}_*\Sigma$ the strict transform. We will also use hyperresolution to refer specifically to cubical hyperresolution, see \cite[\S 10.6]{Kol13} for more detail and conventions. In particular, an embedded hyperresolution of $\Sigma\subseteq X$ refers to a hyperresolution of the $1$-cubical variety $\Sigma \to X$. A good hyperresolution of a pair $(X, \Sigma)$ refers to a hyperresolution of $X$ such that on each component $X_\alpha$, we have $\Sigma_\alpha$ is either an snc divisor, all of $X_\alpha$ or empty.
\end{notation}

In the following we show two basic facts that will be used later in the paper.

\begin{lemma} \label{S2-seq-trick}
    Let $X$ be a noetherian scheme and let $\Sigma$ be a reduced divisor on $X$. Let

    $$ 0 \rightarrow \mathscr{F} \rightarrow \mathscr{F}_0 \rightarrow \mathscr{F}_1 $$
be an exact sequence of coherent sheaves on $X$. If $\mathscr{F}_0$ is $S_2$ and torsion free on $X$ and $\mathscr{F}_1$ is torsion free on $\Sigma$ then $\mathscr{F}$ is $S_2$ and torsion free on $X$.
\end{lemma}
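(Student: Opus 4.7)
The plan is to prove torsion-freeness of $\mathscr{F}$ immediately from the inclusion $\mathscr{F} \hookrightarrow \mathscr{F}_0$, and to reduce the $S_2$ statement to a local cohomology calculation. First I would introduce the image $\mathscr{G} := \mathrm{image}(\mathscr{F}_0 \to \mathscr{F}_1)$, fitting into a short exact sequence
\[
0 \to \mathscr{F} \to \mathscr{F}_0 \to \mathscr{G} \to 0.
\]
Since $\mathscr{G}$ is a subsheaf of $\mathscr{F}_1$, and $\mathscr{F}_1$ is an $\mathcal{O}_\Sigma$-module that is torsion-free on $\Sigma$, the sheaf $\mathscr{G}$ is itself supported on $\Sigma$ and torsion-free as an $\mathcal{O}_\Sigma$-module.

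Next, for each $x \in X$, I would apply the long exact sequence of local cohomology at the maximal ideal $\mathfrak{m}_x \subset \mathcal{O}_{X,x}$,
\[
\cdots \to H^{i-1}_{\mathfrak{m}_x}(\mathscr{G}_x) \to H^i_{\mathfrak{m}_x}(\mathscr{F}_x) \to H^i_{\mathfrak{m}_x}(\mathscr{F}_{0,x}) \to \cdots,
\]
and use that $\mathscr{F}_0$ is $S_2$ to get $H^i_{\mathfrak{m}_x}(\mathscr{F}_{0,x}) = 0$ for all $i < \min(2, \dim \mathcal{O}_{X,x})$. This reduces the desired depth estimate on $\mathscr{F}_x$ to the single vanishing $H^0_{\mathfrak{m}_x}(\mathscr{G}_x) = 0$ whenever $\dim \mathcal{O}_{X,x} \geq 2$, together with the trivial fact that $\mathscr{F} \subseteq \mathscr{F}_0$ inherits the $S_1$ condition at codimension-one points.

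For the remaining vanishing, if $x \notin \Sigma$ then $\mathscr{G}_x = 0$ and nothing is required. If $x \in \Sigma$, the hypothesis that $\Sigma$ is a divisor gives $\dim \mathcal{O}_{\Sigma, x} \geq \dim \mathcal{O}_{X,x} - 1 \geq 1$, and since $\mathcal{O}_{\Sigma,x}$ is reduced and $\mathscr{G}_x$ is torsion-free over it, a standard associated-primes argument shows that every $\mathfrak{p} \in \mathrm{Ass}_{\mathcal{O}_{\Sigma,x}}(\mathscr{G}_x)$ consists entirely of zero-divisors of $\mathcal{O}_{\Sigma,x}$, so by prime avoidance it lies in some minimal prime and hence is itself minimal. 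In particular $\mathfrak{m}_x \notin \mathrm{Ass}(\mathscr{G}_x)$, giving the required vanishing. The main obstacle is precisely this dimension bookkeeping at points of $\Sigma$: the fact that $\Sigma$ is assumed to be a \emph{divisor} (rather than a general closed subscheme) is what ensures we lose at most one dimension in passing from $X$ to $\Sigma$, so that $\mathscr{G}_x$ still lives over a local ring of positive dimension and torsion-freeness can be upgraded to depth $\geq 1$.
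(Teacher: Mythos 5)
Your argument is correct and takes the same route as the paper: both begin by forming the short exact sequence $0 \to \mathscr{F} \to \mathscr{F}_0 \to \mathscr{G} \to 0$ with $\mathscr{G}$ the image in $\mathscr{F}_1$, observe that $\mathscr{G}$ is torsion-free on $\Sigma$ and hence $S_1$ of dimension $\dim X - 1$, and conclude. The only difference is that the paper then cites \cite[4.10(iii)]{Kov25} as a black box, while you unpack that citation into the explicit local-cohomology computation (the long exact sequence at each $\mathfrak{m}_x$ together with the associated-primes argument showing $H^0_{\mathfrak{m}_x}(\mathscr{G}_x)=0$ at points of codimension $\geq 2$); this is essentially what the referenced lemma establishes, so you have simply made the proof self-contained.
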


\begin{proof}
    The image of $\mathscr{F}_0$ in $\mathscr{F}_1$ is torsion free on $\Sigma$, therefore it is $S_1$ and has dimension $\dim X - 1$ (or it is zero and the statement is trivial). Now the lemma follows from \cite[4.10(iii)]{Kov25}.
\end{proof}

\begin{lemma}\label{lem-rlim}
    Let $D = B \rightarrow A \leftarrow C$ be a diagram of objects of $D(X)$. There is an exact triangle

\begin{equation*}\label{eqn-rlim-triangle}
R\varprojlim D \rightarrow B \oplus C \rightarrow A \xrightarrow{+1} 
\end{equation*}
\end{lemma}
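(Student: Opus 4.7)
The plan is to identify $R\varprojlim D$ with the homotopy pullback of $B$ and $C$ over $A$ and then read off the standard distinguished triangle. Let $\beta \colon B \to A$ and $\gamma \colon C \to A$ denote the two structure maps of the cospan $D$. In any abelian category, the strict limit of a cospan is the fiber product, which may be presented as the kernel of the difference map $\beta - \gamma \colon B \oplus C \to A$. To pass to the derived setting, I would replace $A$, $B$, $C$ with K-injective representatives and apply the same construction termwise; the resulting object represents $R\varprojlim D$ because K-injective resolutions compute all derived limits.

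Once $R\varprojlim D$ is identified with the homotopy fiber of $\beta - \gamma$, the conclusion is immediate from the definition of the mapping cone: any morphism $f \colon U \to V$ in $D(X)$ sits in a distinguished triangle
\[
\mathrm{fib}(f) \to U \xrightarrow{f} V \xrightarrow{+1}.
\]
Applied to $f = \beta - \gamma$ with $U = B \oplus C$ and $V = A$, this yields
\[
R\varprojlim D \to B \oplus C \xrightarrow{\beta-\gamma} A \xrightarrow{+1},
\]
which is the desired exact triangle. Note that the precise form of the middle map is irrelevant to the statement of the lemma, as only the existence of the triangle is asserted; up to sign conventions, one could equally use $(\beta, \gamma)$.

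The only potential obstacle is justifying that $R\varprojlim$ of a cospan is computed as the homotopy fiber of the difference map, but this is entirely standard and can be handled either by invoking the Bousfield--Kan formula for homotopy limits of finite diagrams or by verifying the universal property $\mathrm{Hom}(X, R\varprojlim D) = R\varprojlim \mathrm{Hom}(X, D)$ directly. In fact, many references take the triangle in the statement as the \emph{definition} of the homotopy pullback, in which case the lemma becomes a tautology. Either way, no genuine difficulty arises, and the entire argument is a brief bookkeeping exercise.
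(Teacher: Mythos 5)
Your approach is genuinely different from the paper's, and it is correct in outline, but one step needs repair. You identify $R\varprojlim D$ with the homotopy pullback and read off the triangle from the cone, which is the ``right'' conceptual picture. The paper instead decomposes the cospan as an exact triangle of diagrams $D'' \to D \to D'$ with $D' = (B \to 0 \leftarrow C)$ and $D'' = (0 \to A \leftarrow 0)$, computes $R\varprojlim D' \cong B\oplus C$ and $R\varprojlim D'' \cong A[1]$ directly, and applies $R\varprojlim$ to the triangle. The paper's route is more elementary and avoids invoking any facts about homotopy limits beyond the existence of injective resolutions in the diagram category; your route is shorter once the identification of $R\varprojlim$ with the homotopy fiber is in hand.

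The imprecise step is the sentence claiming that replacing $A$, $B$, $C$ by K-injective representatives and taking the \emph{strict} kernel of $\beta - \gamma$ computes $R\varprojlim D$. That is not true: objectwise fibrant replacement is not the same as an injectively fibrant (or Reedy fibrant) replacement of the diagram, and the strict limit of a diagram of K-injectives can fail to be the homotopy limit. Concretely, injective objects in the cospan diagram category are direct sums of right Kan extensions from single vertices, namely diagrams of the form $J_1 \to 0 \leftarrow J_2$ (and $J \to J \leftarrow J$ from the middle vertex), so making the three entries K-injective is strictly weaker than making the cospan fibrant. To close the gap you must either (i) actually pass to a fibrant replacement of the cospan where one of the two structure maps becomes a termwise surjection, or (ii) invoke the Bousfield--Kan (Roos) complex, as you mention: for this two-step poset the Roos complex is $B\oplus A\oplus C \to A\oplus A$, which visibly quotients by an acyclic subcomplex to give $B\oplus C \xrightarrow{\beta-\gamma} A$. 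Either of those completes the argument; as written, the K-injective sentence alone does not.
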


\begin{proof}

Let $D = B \rightarrow A \leftarrow C$ be a diagram of objects of $D(X)$. Let $D' = B \rightarrow 0 \leftarrow C$ and $D'' = 0 \rightarrow A \leftarrow 0$.

We have the following exact triangle of diagrams,

$$ D'' \rightarrow D \rightarrow D' \xrightarrow{+1}. $$
So we get an exact triangle after applying $R\varprojlim$,

\begin{equation} \label{triangle}
    R\varprojlim D'' \rightarrow R\varprojlim D \rightarrow R\varprojlim D' \xrightarrow{+1}.
\end{equation}
Let $I_1$ be an injective resolution of $B$ and $I_2$ an injective resolution of $C$. Then $I_1 \rightarrow 0 \leftarrow I_2$ is an injective resolution of $D'$ and therefore $R\varprojlim D' \cong \varprojlim (I_1 \rightarrow 0 \leftarrow I_2) \cong I_1 \oplus I_2 \cong B \oplus C$.

Consider the short exact sequence of diagrams

\begin{center}
\begin{tikzcd}[cramped, row sep=small]
0 \arrow[r] \arrow[d] & A \arrow[r] \arrow[d] & A \arrow[d] \\
A \arrow[r]           & A \arrow[r]           & 0           \\
0 \arrow[r] \arrow[u] & 0 \arrow[r] \arrow[u] & 0. \arrow[u]
\end{tikzcd}
\end{center}
Taking $R\varprojlim$, we obtain

$$ R\varprojlim(0 \rightarrow A \leftarrow 0) \rightarrow R\varprojlim(A \rightarrow A \leftarrow 0) \rightarrow R\varprojlim (A \rightarrow 0 \leftarrow 0) \xrightarrow{+1} $$

Let $I$ be an injective resolution of $A$. Then $I \rightarrow I \leftarrow 0$ and $I \rightarrow 0 \leftarrow 0$ are injective resolutions of $A \rightarrow A \leftarrow 0$ and $A \rightarrow 0 \leftarrow 0$ respectively. A similar computation shows that $R\varprojlim(A \rightarrow A \leftarrow 0) \cong 0$ and $R\varprojlim (A \rightarrow 0 \leftarrow 0) \cong A$. Thus $R\varprojlim D'' \cong A[1]$.

Now we can plug these into \ref{triangle} and obtain 

\begin{equation*}
    A[1] \rightarrow R\varprojlim D \rightarrow B \oplus C \xrightarrow{+1} 
\end{equation*}
which is exactly what we want after shifting.
\end{proof}

\section{Logarithmic Du Bois Complex of a Pair}

% \textcolor{red}{Interchange Sigma and Z in the next 2?}

\begin{definition}\label{def_pair_log_poles}
Let $(X, \Sigma)$ be a reduced pair, $Z$ a reduced closed subscheme of $X$. Define $\uOmega^\kdot_{X,\Sigma}(\log Z)\in D_{\rm filt, coh}(X)$ to be the unique filtered object fitting in the following exact triangle
\[
\uOmega^\kdot_{X,\Sigma}(\log Z) \to \uOmega^\kdot_X(\log Z) \to \uOmega^\kdot_{\Sigma}(\log (Z \cap {\Sigma})) \xrightarrow{+1}.
\]
We call $\uOmega^\kdot_{X,\Sigma}(\log Z)$ the \emph{logarithmic Du Bois complex of $(X,\Sigma)$ with poles along $Z$}. This definition makes sense in general, but we will mostly only consider when $Z$ and $\Sigma$ have no common components.
% By the following lemma, when $X$ is smooth and $\Sigma \cup Z$ is snc, this complex is naturally quasi-isomorphic to $\Omega^\kdot_{X,\Sigma}(\log Z):= \Omega^\kdot_{X}(\log(\Sigma+Z))(-\Sigma)$.
\end{definition}

Consider an embedded hyperresolution of $\Sigma \subseteq X$ such that on each pieces $X_\alpha$, we have that $Z_\alpha=\pi_\alpha^*Z\subseteq X_\alpha$ is either an snc divisor, all of $X_\alpha$, or empty. It is easy to see that such a hyperresolution always exists, by simply requiring this property on each successive resolution of $X$ when constructing an embedded hyperresoltion. Furthermore, if $Z$ and $\Sigma$ has no common components, we can always choose $\Sigma_\kdot$ such that $Z_\alpha$ and $\Sigma_\alpha$ share no common components with  for each $\alpha$.

\begin{lemma}\label{lem-pair-log-poles-hyperres}
Suppose $Z$ and $\Sigma$ have no common irreducible components. Let $\pi_\kdot:X_\kdot \to X$ be a hyperresolution for $X, \Sigma, Z$ as above. %Let $\pi_\kdot:(X_\kdot, \Sigma_\kdot)\to (X, \Sigma)$ be a good hyperresolution for the reduced pair $(X,\Sigma\cup Z)$. (In particular, for all $\alpha$, $\Sigma_\alpha\cup Z_\alpha$ is either an snc divisor in $X_\alpha$, empty, or $\Sigma_\alpha\cup Z_\alpha = X_\alpha$). 
Denote 
\[
\Omega^\kdot_{X_\alpha,\Sigma_\alpha}(\log Z_\alpha):=\Omega^\kdot_{X_\alpha}(\log(\Sigma_\alpha+Z_\alpha))(-\Sigma_\alpha),
\]
where it is understood to be %just $\Omega_{X_\alpha}(\log H_\alpha)$ 
$0$ when $\Sigma_\alpha\cup Z_\alpha=X_\alpha$. Then we have a filtered quasi-isomorphism
\[
R{\pi_\kdot}_*\Omega^\kdot_{X_\kdot,\Sigma_\kdot}(\log Z_\kdot)\qis \uOmega^\kdot_{X,\Sigma}(\log Z).
\]
\end{lemma}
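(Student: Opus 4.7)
The plan is to exhibit a short exact sequence of filtered complexes on the hyperresolution $X_\kdot$ whose pushforward realizes the defining distinguished triangle of $\uOmega^\kdot_{X,\Sigma}(\log Z)$ from Definition~\ref{def_pair_log_poles}. First I would verify, on each piece $X_\alpha$ where $\Sigma_\alpha$ and $Z_\alpha$ are snc and share no components, the short exact sequence of complexes
\begin{equation*}
0 \to \Omega^\kdot_{X_\alpha}(\log(\Sigma_\alpha+Z_\alpha))(-\Sigma_\alpha) \to \Omega^\kdot_{X_\alpha}(\log Z_\alpha) \to \Omega^\kdot_{\Sigma_\alpha}(\log(Z_\alpha\cap\Sigma_\alpha)) \to 0,
\end{equation*}
equipped with the filtration b\^ete throughout. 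This is a local computation: in coordinates adapted to the snc structure of $\Sigma_\alpha+Z_\alpha$ the right map is restriction, and its kernel consists precisely of log forms along $\Sigma_\alpha+Z_\alpha$ vanishing along $\Sigma_\alpha$. The degenerate cases where $\Sigma_\alpha=X_\alpha$ or $Z_\alpha=X_\alpha$ are accommodated by the convention stated in the lemma and collapse the sequence to either an identity or a zero map.

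Next I would observe that these local sequences are compatible across the cubical indexing and hence assemble into a short exact sequence of filtered complexes on $X_\kdot$. Applying $R\pi_{\kdot *}$ produces a distinguished triangle in $D_{\rm filt}(X)$. By the standard hyperresolution computation of the log Du Bois complex (see \cite[\S 3]{Kov25}), the middle term of the resulting triangle is filtered quasi-isomorphic to $\uOmega^\kdot_X(\log Z)$; and since $\Sigma_\kdot$ provides a hyperresolution of $\Sigma$ compatible with $Z_\kdot\cap\Sigma_\kdot$, the right term is filtered quasi-isomorphic to $\uOmega^\kdot_\Sigma(\log(Z\cap\Sigma))$. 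Comparing with the defining triangle of $\uOmega^\kdot_{X,\Sigma}(\log Z)$ and invoking the uniqueness of the filtered object fitting into that triangle yields the desired filtered quasi-isomorphism.

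The main obstacle I anticipate is the filtered aspect: the comparison must occur in $D_{\rm filt}(X)$, not merely $D(X)$, so it is important that the sequence on $X_\kdot$ be a genuine short exact sequence of filtered complexes rather than only a distinguished triangle, since only the former passes through $R\pi_{\kdot *}$ while preserving filtered data. A secondary subtlety is carefully tracking the convention $\Omega^\kdot_{X_\alpha,\Sigma_\alpha}(\log Z_\alpha) = 0$ when $\Sigma_\alpha\cup Z_\alpha = X_\alpha$ across each cubical piece to confirm that exactness is preserved in all boundary situations.
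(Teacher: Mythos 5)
Your proposal is correct and follows essentially the same route as the paper: establish the short exact sequence of filtered (b\^ete) complexes on each piece $X_\alpha$ (the paper cites \cite[2.3]{EV92} for this rather than redoing the local computation), apply $R\pi_{\kdot *}$ to get a distinguished triangle whose middle and right terms compute $\uOmega^\kdot_X(\log Z)$ and $\uOmega^\kdot_\Sigma(\log(Z\cap\Sigma))$, and compare with the defining triangle of Definition~\ref{def_pair_log_poles}. Your two flagged subtleties — that the sequence be a genuine short exact sequence of filtered complexes, and that the degenerate cases $\Sigma_\alpha\cup Z_\alpha=X_\alpha$ be handled by convention — are both correctly resolved in your argument and match the (more terse) treatment in the paper.
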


\begin{proof}
On each pieces $\alpha$ of the hyperresolution, $\Sigma_\alpha$ is smooth, and $Z_\alpha|_{\Sigma_\alpha}$ is a divisor on $\Sigma_\alpha$. Therefore \cite[2.3]{EV92} applies and we have short exact sequences:
\[
0 \to \Omega^\kdot_{X_{\alpha}}(\log \Sigma_{\alpha} + Z_{\alpha})(-\Sigma_\alpha) \to \Omega^\kdot_{X_\alpha}(\log Z_\alpha) \to \Omega^\kdot_{\Sigma_\alpha}(\log Z_\alpha|_{\Sigma_\alpha}) \to 0.
\]
Note that this holds regardless of whether $\Sigma_\alpha =X_\alpha$, $Z_\alpha = X_\alpha$, or otherwise. Applying $(R\pi_\kdot)_*$ gives a distinguished triangle
\[
R{\pi_\kdot}_*\Omega^\kdot_{X_\kdot,\Sigma_\kdot}(\log Z_\kdot)\to \uOmega^\kdot_X(\log Z) \to \uOmega^\kdot_{\Sigma}(\log Z|_{\Sigma}) \xrightarrow{+1}.
\]
The desired quasi-isomorphism immediately follows.
\end{proof}

\begin{corollary}
Let $X$ be smooth and $\Sigma \cup Z$ snc such that $\Sigma$ and $Z$ have no common irreducible components. Then
\[
\uOmega^\kdot_{X, \Sigma}(\log Z) \qis \Omega^\kdot_{X}(\log(\Sigma+Z))(-\Sigma).
\]
\end{corollary}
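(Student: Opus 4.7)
The plan is to deduce this statement as an essentially immediate consequence of Lemma \ref{lem-pair-log-poles-hyperres}, by exhibiting a particularly simple embedded hyperresolution of $\Sigma \subseteq X$ that satisfies its hypotheses. Since $X$ is already smooth and $\Sigma\cup Z$ is snc, no actual resolving needs to occur on top of $X$: I would take the embedded hyperresolution whose $0$-th piece is the identity $X_0 = X$ with $\Sigma_0 = \Sigma$ and $Z_0 = Z$, and whose remaining pieces $X_\alpha$ are the strata of the snc divisor $\Sigma$, that is, a cubical hyperresolution of $\Sigma$ itself sitting over $X$ via $\Sigma\hookrightarrow X$.

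Next I would verify this hyperresolution satisfies the conditions of Lemma \ref{lem-pair-log-poles-hyperres}. Each $X_\alpha$ is smooth: at the top level by hypothesis, and at the lower levels because strata of an snc divisor are smooth. The divisor $Z_\alpha$ is snc or empty on each piece: on $X_0=X$ by hypothesis, and on each stratum $X_\alpha$ of $\Sigma$ the snc condition on $\Sigma+Z$ and transversality force $Z|_{X_\alpha}$ to be snc in $X_\alpha$; it is never all of $X_\alpha$ because $\Sigma$ and $Z$ share no components. Finally, $Z_\alpha$ and $\Sigma_\alpha$ share no common components on any piece.

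Applying Lemma \ref{lem-pair-log-poles-hyperres} gives
\[
\uOmega^\kdot_{X,\Sigma}(\log Z) \qis R\pi_{\kdot *}\,\Omega^\kdot_{X_\kdot,\Sigma_\kdot}(\log Z_\kdot).
\]
On the $0$-th piece the integrand is by definition $\Omega^\kdot_X(\log(\Sigma+Z))(-\Sigma)$. For every higher piece $X_\alpha$ arising as a stratum of $\Sigma$, one has $\Sigma_\alpha=X_\alpha$ as sets, so by the convention of the lemma $\Omega^\kdot_{X_\alpha,\Sigma_\alpha}(\log Z_\alpha)=0$. The cosimplicial object therefore collapses to its top piece and the derived pushforward is just $\Omega^\kdot_X(\log(\Sigma+Z))(-\Sigma)$, which yields the desired filtered quasi-isomorphism.

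There is essentially no obstacle beyond bookkeeping; the substance sits entirely in Lemma \ref{lem-pair-log-poles-hyperres}, and the corollary is the observation that on smooth $X$ with snc $\Sigma+Z$ one can take the simplest embedded hyperresolution, on which only the top piece contributes. The one point I would want to double-check is that the convention ``$\Omega^\kdot_{X_\alpha,\Sigma_\alpha}(\log Z_\alpha)=0$ when $\Sigma_\alpha\cup Z_\alpha=X_\alpha$'' really trivializes every stratum contribution, but this is immediate from $\Sigma_\alpha=X_\alpha$ in the hyperresolution I have chosen.
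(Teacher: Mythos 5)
The overall strategy — deduce the corollary from Lemma~\ref{lem-pair-log-poles-hyperres} by choosing the simplest available embedded hyperresolution — is the right idea, and is surely what the paper intends. However, the specific hyperresolution you propose is not a valid one in the sense Lemma~\ref{lem-pair-log-poles-hyperres} requires, and the gap is exactly at the point you did not check.

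An ``embedded hyperresolution of $\Sigma\subseteq X$'' is a hyperresolution of the $1$-cubical variety $\Sigma\to X$, and in such a hyperresolution each $\Sigma_\alpha\subseteq X_\alpha$ is \emph{smooth}; the proof of Lemma~\ref{lem-pair-log-poles-hyperres} uses this explicitly when it invokes \cite[2.3]{EV92}, the residue/restriction sequence, which requires restriction to a \emph{smooth} divisor $\Sigma_\alpha$ (and requires $\Omega^\kdot_{\Sigma_\alpha}$ to be honest K\"ahler log differentials on a smooth variety, so that the cubical pushforward of $\Omega^\kdot_{\Sigma_\kdot}(\log Z_\kdot|_{\Sigma_\kdot})$ computes $\uOmega^\kdot_\Sigma(\log Z|_\Sigma)$). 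Your $0$-th piece has $\Sigma_0=\Sigma$, which is only snc; as soon as $\Sigma$ has more than one component, $\Sigma_0$ is not smooth and the [EV92, 2.3] sequence used in the proof of the lemma does not apply to that piece. You verified the explicitly listed conditions ($X_\alpha$ smooth, $Z_\alpha$ snc/all/empty, no common components) but not the smoothness of $\Sigma_\alpha$, which is the one condition your choice fails when $\Sigma$ is reducible. (In the reducible case a genuine embedded hyperresolution of $\Sigma\to X$ must actually resolve $\Sigma$ — for instance into its disjoint smooth components and their intersections — and then the top piece over $X$ can no longer carry $\Sigma_0 = \Sigma$ as a smooth divisor.)

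A clean way to close the gap is to bypass the lemma and argue directly from Definition~\ref{def_pair_log_poles}. Since $X$ is smooth and $Z$ snc, $\uOmega^\kdot_X(\log Z)\qis\Omega^\kdot_X(\log Z)$. For the snc divisor $\Sigma$ with $Z|_\Sigma$ a divisor on it, one has the short exact sequences
\[
0\to \Omega^p_X(\log(\Sigma+Z))(-\Sigma)\to \Omega^p_X(\log Z)\to \uOmega^p_\Sigma(\log Z|_\Sigma)\to 0,
\]
obtained by iterating \cite[2.3]{EV92} over the smooth components of $\Sigma$ (equivalently, by computing $\uOmega^\kdot_\Sigma(\log Z|_\Sigma)$ from the Mayer--Vietoris hyperresolution of $\Sigma$ by its strata). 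Plugging this into the defining triangle gives the stated filtered quasi-isomorphism. Alternatively, one may apply Lemma~\ref{lem-pair-log-poles-hyperres} with a legitimate embedded hyperresolution whose $\Sigma_\alpha$ are smooth, but then the collapse is no longer ``only the top piece contributes'' as you described, since both the piece over $X$ and the Mayer--Vietoris pieces over $\Sigma$ contribute to the cubical pushforward. When $\Sigma$ is irreducible (hence smooth) your trivial hyperresolution is fine and your argument goes through as written; it is precisely the reducible snc case that needs a different treatment.
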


\begin{remark}
We can view $\uOmega^\kdot_{X,\Sigma}(\log Z)$ as a filtered complex describing the triple $(X, \Sigma, Z)$, where we excise both $\Sigma$ and $Z$ but the complement of $\Sigma$ provides contribution to cohomology with compact support. 

% Notice that the components of $Z$ already present in $\Sigma$ makes no difference. More precisely, if $Z\cup \Sigma = Z'\cup \Sigma$, then $\uOmega^\kdot_{X, \Sigma}(\log Z)\qis \uOmega^\kdot_{X, \Sigma}(\log Z')$. Indeed, we can directly observe this using Lemma \ref{lem-pair-log-poles-hyperres}, since in each piece of a good hyperresolution, $\Sigma_\alpha +Z_\alpha = \Sigma_\alpha +Z'_\alpha$, so they are computed from the same complexes.
\end{remark}

% \subsection{Connection to Higher Rational Singularities}

An important use of the log Du Bois complex of a pair is the following comparison Theorem analogous to \cite[3.15]{FL22}. This will allow for an intrinsic definition (\ref{def-rat-sings}) of $m$-rational singularities.

\begin{theorem}\label{thm-irrtl-pushforward}
Notation as in \ref{notation-pair}. Let $S$ denote the singular locus of the pair $(X, \Sigma)$, that is, the locus where $X$ is singular or $\Sigma$ is not snc. Suppose that $\dim S\le d$. Then for all $p\le n-d$,
    \[
    \bbD_X(\uOmega_X^{n-p}(\log \Sigma))\qis R\pi_* \Omega^p_{\widetilde{X},\widetilde{\Sigma}}(\log E),
    \]
where as in \ref{def_pair_log_poles} we define
    \[
    \Omega^p_{\widetilde{X},\widetilde{\Sigma}}(\log E) = \Omega^p_{\widetilde{X}}(\log \widetilde{\Sigma}+E)(-\widetilde{\Sigma})
    \]
\end{theorem}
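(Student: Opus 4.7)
The plan is to combine Grothendieck duality on the resolution $\pi:\widetilde{X}\to X$ with a pair-analog of the comparison theorem \cite[3.15]{FL22}, which computes the log Du Bois complex of a variety via a single log resolution under a dimension-of-singular-locus hypothesis.

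Setting $D:=\widetilde{\Sigma}+E$, which is snc on the smooth variety $\widetilde{X}$, the wedge pairing
\[
\Omega^p_{\widetilde{X}}(\log D)\otimes \Omega^{n-p}_{\widetilde{X}}(\log D)\longrightarrow \Omega^n_{\widetilde{X}}(\log D)=\omega_{\widetilde{X}}(D)
\]
is perfect, and an appropriate twist yields the canonical isomorphism
\[
\bbD_{\widetilde{X}}\bigl(\Omega^{n-p}_{\widetilde{X}}(\log D)(-E)\bigr)\;\cong\;\Omega^p_{\widetilde{X}}(\log D)(-\widetilde{\Sigma})=\Omega^p_{\widetilde{X},\widetilde{\Sigma}}(\log E).
\]
Applying $R\pi_*$ and invoking Grothendieck duality for the proper morphism $\pi$, so that $R\pi_*\circ \bbD_{\widetilde{X}}\simeq \bbD_X\circ R\pi_*$, identifies the right-hand side of the theorem with $\bbD_X\bigl(R\pi_*\Omega^{n-p}_{\widetilde{X}}(\log D)(-E)\bigr)$. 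Applying $\bbD_X$ to the desired identity, the theorem therefore reduces to the quasi-isomorphism
\[
R\pi_*\Omega^{n-p}_{\widetilde{X}}(\log D)(-E)\;\simeq\;\uOmega^{n-p}_X(\log \Sigma)\quad\text{for all } p\le n-d.
\]

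To prove this reduced statement, I would extend $\pi$ to a good cubical hyperresolution $\pi_\bullet:X_\bullet\to X$ of the pair $(X,\Sigma)$ with $X_0=\widetilde{X}$; such a refinement exists by the construction recalled in \cite[\S10.6]{Kol13}. An adaptation of Lemma~\ref{lem-pair-log-poles-hyperres} then expresses $\uOmega^{n-p}_X(\log \Sigma)$ as $R\pi_{\bullet*}\Omega^{n-p}_{X_\bullet}(\log(\widetilde{\Sigma}_\bullet+E_\bullet))(-E_\bullet)$. The higher strata $X_\alpha$ for $|\alpha|\ge 1$ can be arranged to lie over the singular locus of $(X,\Sigma)$ and therefore have dimension at most $d$; since $\Omega^{n-p}_{X_\alpha}(\log\,\cdot)$ vanishes whenever $n-p>\dim X_\alpha$, every higher-simplicial contribution vanishes in the range $n-p>d$, and the hyperresolution computation collapses to the single-resolution pushforward $R\pi_*\Omega^{n-p}_{\widetilde{X}}(\log D)(-E)$. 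The boundary case $n-p=d$ requires a brief residue-sequence check that the simplicial differential out of $X_0$ vanishes on the relevant graded piece; here the $(-E)$ twist is precisely what kills the residue contribution of the pole along $E$.

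The main obstacle will be this hyperresolution step, specifically matching the $(-E)$ twist and the $\log\widetilde{\Sigma}_\bullet$ poles on the simplicial resolutions with the intrinsic log Du Bois complex $\uOmega^{n-p}_X(\log\Sigma)$ on the base, and handling the boundary dimension $n-p=d$. Once this comparison is in place, the duality reduction in the first step is essentially formal.
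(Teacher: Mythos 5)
Your initial duality reduction is correct and matches the paper's computation exactly: the perfect pairing on $\Omega^{\kdot}_{\widetilde{X}}(\log(\widetilde{\Sigma}+E))$ followed by Grothendieck duality for $\pi$ reduces the theorem to the quasi-isomorphism
\[
R\pi_*\Omega^{n-p}_{\widetilde{X}}(\log(\widetilde{\Sigma}+E))(-E)\;\simeq\;\uOmega^{n-p}_X(\log\Sigma)\qquad\text{for }p\le n-d,
\]
(the paper works on the dual side throughout, but this is equivalent).

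The problem is the hyperresolution step. Your key claim that ``the higher strata $X_\alpha$ for $|\alpha|\ge 1$ \dots\ therefore have dimension at most $d$, \dots\ [so] every higher-simplicial contribution vanishes in the range $n-p>d$'' is false. In the $2$-resolution built from $\pi$, the relevant higher-index pieces are built from \emph{both} the discriminant $S\subset X$ (which does have $\dim\le d$) \emph{and} the exceptional locus $E=\pi^{-1}(S)\subset\widetilde{X}$, which is a divisor of dimension $n-1$. The $E$-contribution does \emph{not} vanish, so the hyperresolution formula does not simply collapse to a single-stratum pushforward, and there is no uniform dimension bound on the pieces of the kind your argument needs. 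Moreover, your proposed ``adaptation of Lemma~\ref{lem-pair-log-poles-hyperres}'' — expressing $\uOmega^{n-p}_X(\log\Sigma)$ as $R\pi_{\bullet*}\Omega^{n-p}_{X_\bullet}(\log(\widetilde{\Sigma}_\bullet+E_\bullet))(-E_\bullet)$ — is not the hyperresolution formula for $\uOmega^{n-p}_X(\log\Sigma)$: the $(-E_\bullet)$ twists and the extra $\log E_\bullet$ poles compute a different object (roughly, $\uOmega^{n-p}_{X,S}(\log\Sigma)$ in the paper's notation), so you are assuming the very identification you need to establish.

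The correct mechanism is what the paper does via Proposition~\ref{lem-irrtl-triangle} and Corollary~\ref{cor-irrtl-psuhforward}: use the \emph{partial} $2$-resolution $(E,\widetilde{X};S,X)$ and the Mayer--Vietoris triangle from Lemma~\ref{lem-rlim}. In that triangle, only the $\uOmega^{n-p}_S(\log\Sigma|_S)$ term vanishes when $n-p>d$; the $E$-term survives, but it gets absorbed: the cone of $\Omega^{n-p}_{\widetilde{X}}(\log\pi^{-1}\Sigma)\to\Omega^{n-p}_E(\log\pi^{-1}\Sigma|_E)$ is precisely $\Omega^{n-p}_{\widetilde{X}}(\log(\widetilde{\Sigma}+E))(-E)$ by the residue sequence of \cite[2.3]{EV92}. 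Your remark that ``the $(-E)$ twist \dots\ kills the residue contribution'' is gesturing at exactly this cancellation, but it has to be implemented as the identification $\uOmega^{n-p}_{X,S}(\log\Sigma)\simeq R\pi_*\Omega^{n-p}_{\widetilde{X}}(\log(\widetilde{\Sigma}+E))(-E)$ followed by the vanishing of $\uOmega^{n-p}_S(\log\Sigma|_S)$; it is not a vanishing of simplicial strata, and it is needed for all $n-p$ in the relevant range, not just the boundary case.
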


We will first develop several facts about the log Du Bois complex of a pair.

\begin{lemma}\label{lem-log-partial-hyperres}
    Let $\pi_\kdot: X_\kdot \to X$ be a cubical scheme over $X$ of cohomological descent (cf. \cite{GNPP88}), and $\Sigma_\kdot = X_\kdot \times \Sigma$. Then
    \[
    \uOmega^\kdot_X(\log \Sigma)\qis R(\pi_\kdot)_* \uOmega^\kdot_{X_\kdot}(\log \Sigma_\kdot)
    \]
\end{lemma}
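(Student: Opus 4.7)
The plan is to reduce both sides to a common refinement by further hyperresolving $X_\kdot$ compatibly with $\Sigma$, and then conclude from the hyperresolution definition of the log Du Bois complex together with functoriality of $R(-)_*$. Since $\pi_\kdot$ already satisfies cohomological descent, the only missing feature is that the simplices $X_\alpha$ need not be smooth with $\Sigma_\alpha$ snc, so the entire strategy is to "finish" the given partial hyperresolution.

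First, I would construct a hyperresolution $\epsilon_{\kdot\kdot}: Y_{\kdot\kdot} \to X_\kdot$ of the cubical scheme $X_\kdot$, built simplex by simplex using \cite[\S 10.6]{Kol13} and compatibly with the face maps of $X_\kdot$, so that on each piece $Y_\beta$ the pullback of $\Sigma$ is either an snc divisor, all of $Y_\beta$, or empty. By composition of cohomological descent, $\pi_\kdot \circ \epsilon_{\kdot\kdot}: Y_{\kdot\kdot} \to X$ is then an embedded hyperresolution of $(X,\Sigma)$ in the sense of \ref{notation-pair}. Applying the hyperresolution definition of the log Du Bois complex (together with Lemma \ref{lem-pair-log-poles-hyperres} on each simplex) yields filtered quasi-isomorphisms
\[
\uOmega^\kdot_X(\log \Sigma) \qis R(\pi_\kdot \circ \epsilon_{\kdot\kdot})_* \Omega^\kdot_{Y_{\kdot\kdot}}(\log \Sigma_{Y_{\kdot\kdot}})
\]
and, applied componentwise on $X_\kdot$,
\[
\uOmega^\kdot_{X_\kdot}(\log \Sigma_\kdot) \qis R(\epsilon_{\kdot\kdot})_* \Omega^\kdot_{Y_{\kdot\kdot}}(\log \Sigma_{Y_{\kdot\kdot}}).
\]
Pushing the second forward by $\pi_\kdot$ and using the natural identification $R(\pi_\kdot)_* \circ R(\epsilon_{\kdot\kdot})_* \qis R(\pi_\kdot \circ \epsilon_{\kdot\kdot})_*$ produces the desired quasi-isomorphism.

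The main obstacle is arranging $\epsilon_{\kdot\kdot}$ to be simultaneously compatible with the cubical structure of $X_\kdot$ and with the snc-type condition on $\Sigma$: naive simplex-by-simplex resolution can destroy the face maps, so one must proceed inductively, resolving minimal faces first and extending by blowing up face-invariant centers. This is standard technology for cubical hyperresolutions, but it is the only nontrivial step; once such $\epsilon_{\kdot\kdot}$ is in hand, all remaining manipulations are formal properties of derived pushforward and the definitions collected in \ref{notation-pair}.
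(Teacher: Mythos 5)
Your argument is essentially the same as the paper's: both reduce to a common refinement of $X_\kdot$ by a good hyperresolution of $(X,\Sigma)$, invoke the independence theorem for the filtered log complex, and conclude by composing pushforwards. The paper carries out the "common refinement" step more explicitly, by reducing to the case where $X_\kdot\to X$ is a $2$-resolution and then enlarging it to a good cubical hyperresolution $X'_\kdot$ through which $X_\kdot$ factors cofinally; your version instead asks for a hyperresolution $\epsilon_{\kdot\kdot}:Y_{\kdot\kdot}\to X_\kdot$ of the entire cubical diagram in one stroke, leaving the compatibility with face maps as "standard technology." That is exactly the nontrivial input, and the $2$-resolution induction in the paper is how one actually carries it out (citing \cite[I.6]{GNPP88}), so it would be cleaner to spell that out rather than defer to it. Two minor points: (i) your appeal to Lemma \ref{lem-pair-log-poles-hyperres} is misdirected — that lemma concerns the pair complex $\uOmega^\kdot_{X,\Sigma}(\log Z)$, whereas what you need componentwise is the independence theorem for the log Du Bois complex \cite[Theorem 6.3]{DB81}; (ii) the terminology "embedded hyperresolution" in \ref{notation-pair} refers to a hyperresolution of the $1$-cubical variety $\Sigma\to X$, whereas what you build is a good hyperresolution of the pair — the notions are related but not identical, so the phrasing should be adjusted.
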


\begin{proof}
    This is a formal consequence of the independence theorem for filtered log complex \cite[Theorem 6.3]{DB81} and properties of cubical descent, see \cite[I.6, V.3.6(5)]{GNPP88}. More concretely, it suffices to prove the statement when $X_\kdot\to X$ is a $2$-resolution
    \[\begin{tikzcd}
    X_{11} \ar[r] \ar[d] & X_{01} \ar[d] \\
    X_{10} \ar[r] & X.
    \end{tikzcd}\]
    Replace $(X_{01}, \Sigma_{01})$ with a good hyperresolution of itself by a $k$-cubical variety, and view the above as a $(k+2)$-cubical variety by taking appropriate pre-images. This allows us to assume that $X_{01}$ is smooth with $\Sigma_{01}$ either a snc divisor, empty, or equal to $X_{01}$. Now, take another good hyperresolution of the $(k+1)$-cubical variety $(X_{11}, \Sigma_{11})\to (X_{10}, \Sigma_{10})$ to obtain a good cubical hyperresolution $\pi'_\kdot: (X'_\kdot, \Sigma'_\kdot) \to (X,\Sigma)$ of the pair $(X, \Sigma)$. The underlying cubical diagram of $X'_\kdot \to X$ (before taking reduction) factor through $X_\kdot \to X$ in which the latter is cofinal. Therefore, the two inverse limits coincides
    \[
    \uOmega^\kdot_X(\log \Sigma)\qis R(\pi'_\kdot)_* \uOmega^\kdot_{X'_\kdot}(\log \Sigma'_\kdot) \qis R(\pi_\kdot)_* \uOmega^\kdot_{X_\kdot}(\log \Sigma_\kdot),
    \]
    and the claim follows by the independence theorem.
\end{proof}

In particular, a cubical partial hyperresolution as defined in \cite[3.2]{Kov25} is of cohomological descent (and note that there is no assumption needed on $\Sigma_\alpha$). This observation allows us to compute $\uOmega^\kdot_X(\log \Sigma)$ from a cubical scheme that is not necessarily smooth. We will use this to show the log version of \cite[Proposition 3.8]{DB81}.

\begin{proposition}\label{lem-irrtl-triangle}
Let $\pi: \widetilde{X}\to X$ be a proper map that is an isomorphism outside a closed subset $S\subseteq X$, and let $E=\pi^{-1}(S)$ with the reduced structure. Then there exists an exact triangle
    \[
    \uOmega^\kdot_{X}(\log \Sigma)\to \uOmega^\kdot_S(\log \Sigma|_S)\oplus R\pi_* \uOmega^\kdot_{\widetilde{X}}(\log \pi^{-1}(\Sigma)) \to R\pi_* \uOmega^\kdot_{E}(\log \pi^{-1}(\Sigma)|_E) \xrightarrow{+1}.
    \]
where the second map is obtained by taking the difference.
\end{proposition}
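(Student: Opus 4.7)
The plan is to recognize the data $(\pi:\widetilde{X}\to X,\,S,\,E)$ as yielding a $2$-resolution of $X$ and then combine Lemma~\ref{lem-log-partial-hyperres} with Lemma~\ref{lem-rlim} to manufacture the Mayer--Vietoris triangle. This is a log version of the classical argument behind \cite[3.8]{DB81}; the key reason it goes through in this generality is that Lemma~\ref{lem-log-partial-hyperres} does not require the pieces of the cubical diagram to be smooth.

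Concretely, I would first assemble the $2$-cubical scheme $X_\kdot$ with $X_{01}=\widetilde{X}$, $X_{10}=S$ (with its reduced structure), and $X_{11}=E$, with structure maps given by the closed immersions $E\hookrightarrow \widetilde{X}$, $E\hookrightarrow S$, $S\hookrightarrow X$, and with augmentation $\widetilde{X}\to X$ given by $\pi$. Since $\pi$ is proper and an isomorphism over $X\setminus S$, the augmentation $\pi_\kdot:X_\kdot\to X$ is a $2$-resolution, and hence is of cohomological descent by the standard theory of \cite{GNPP88}. Setting $\Sigma_\kdot=X_\kdot\times_X \Sigma$ so that $\Sigma_{01}=\pi^{-1}(\Sigma)$, $\Sigma_{10}=\Sigma|_S$, and $\Sigma_{11}=\pi^{-1}(\Sigma)|_E$, Lemma~\ref{lem-log-partial-hyperres} produces a filtered quasi-isomorphism
\[
\uOmega^\kdot_{X}(\log \Sigma)\qis R(\pi_\kdot)_* \uOmega^\kdot_{X_\kdot}(\log \Sigma_\kdot).
\]
For a punctured square, the derived pushforward to the initial vertex is by construction the homotopy limit of the three derived pushforwards from the non-initial vertices, i.e.\ $R\varprojlim$ of the diagram
\[
\uOmega^\kdot_S(\log \Sigma|_S)\longrightarrow R\pi_* \uOmega^\kdot_{E}(\log \pi^{-1}(\Sigma)|_E)\longleftarrow R\pi_* \uOmega^\kdot_{\widetilde{X}}(\log \pi^{-1}(\Sigma)),
\]
with the two arrows induced by the closed immersions $E\hookrightarrow S$ and $E\hookrightarrow\widetilde{X}$. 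Feeding this diagram into Lemma~\ref{lem-rlim} and transporting along the quasi-isomorphism above then yields exactly the triangle claimed, with the second map appearing as the difference of the two natural restriction maps, as required.

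The only genuinely nontrivial step I anticipate is the identification of $R(\pi_\kdot)_* \uOmega^\kdot_{X_\kdot}(\log \Sigma_\kdot)$ with the $R\varprojlim$ of the three pushforwards; this is a formal bookkeeping check about how the totalization of a cubical complex is defined, and reduces to verifying that the differential on the total complex matches the restriction-and-subtract description baked into Lemma~\ref{lem-rlim}. Everything else, namely cohomological descent of a $2$-resolution, compatibility of the log pole divisors with base change along $X_\kdot \to X$, and the filtered refinement of the triangle, is formal from the inputs already assembled in the paper.
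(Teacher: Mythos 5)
Your proposal is correct and follows essentially the same route as the paper: assemble the punctured $2$-cubical scheme from $(\widetilde{X}, S, E)$, apply Lemma~\ref{lem-log-partial-hyperres} to identify $\uOmega^\kdot_X(\log\Sigma)$ with the derived pushforward from the cubical scheme, recognize that pushforward as an $R\varprojlim$ over the three vertices, and conclude by Lemma~\ref{lem-rlim}. The step you flag as nontrivial is exactly the one the paper dispatches by invoking Grothendieck's theorem on composition of derived functors.
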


\begin{proof}
Note that the maps come from the functorial properties of the log Du Bois complex, see \cite[Theorem 1.2.2]{Kov02}. By assumption, we have the partial $2$-resolution diagram
\[\begin{tikzcd}[cramped]
    E \ar[r] \ar[d] & \widetilde{X} \ar[d] \\
    S \ar[r] & X,
\end{tikzcd}\]
which we may use to compute $\uOmega^\kdot_{X}(\log \Sigma)$ by Lemma \ref{lem-log-partial-hyperres}. By definition, $(\pi_\kdot)_*(\scrF_\kdot) = \varprojlim (\pi_\alpha)_*(\scrF_\alpha)$. Therefore, by Grothendieck's theorem, we have
\[
\uOmega^\kdot_{X}(\log \Sigma) \qis R\varprojlim \biggl(\uOmega^\kdot_S(\log \Sigma|_S)\rightarrow
R\pi_* \uOmega^\kdot_{E}(\log \pi^{-1}(\Sigma)|_E) \leftarrow
R\pi_* \uOmega^\kdot_{\widetilde{X}}(\log \pi^{-1}(\Sigma))\biggr).
\]
The proof now follows from Lemma \ref{lem-rlim}.
\end{proof}

\begin{corollary}\label{cor-irrtl-psuhforward}
Let $\pi: \widetilde{X}\to X$ be a proper map that is an isomorphism outside a closed subset $S\subseteq X$, $E=\pi^{-1}(S)$ with the reduced structure. Then
    \[
    \uOmega^p_{X, S}(\log \Sigma)\qis R\pi_* \uOmega^p_{\widetilde{X},E}(\log \pi^{-1}(\Sigma)).
    \]
\end{corollary}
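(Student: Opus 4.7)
The plan is to realize both sides of the asserted quasi-isomorphism as the fibers of parallel horizontal arrows in a homotopy pullback square supplied by Proposition \ref{lem-irrtl-triangle}, and then match those fibers with the pair log complexes via Definition \ref{def_pair_log_poles}.

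First I would apply Proposition \ref{lem-irrtl-triangle} in the filtered derived category and pass to the $p$-th graded piece, obtaining the distinguished triangle
\[
\uOmega^p_X(\log \Sigma) \to \uOmega^p_S(\log \Sigma|_S) \oplus R\pi_* \uOmega^p_{\widetilde{X}}(\log \pi^{-1}(\Sigma)) \to R\pi_* \uOmega^p_E(\log \pi^{-1}(\Sigma)|_E) \xrightarrow{+1}
\]
in which the second arrow is the difference of the natural restriction maps. By Lemma \ref{lem-rlim}, this triangle exhibits $\uOmega^p_X(\log \Sigma)$ as the homotopy pullback of the cospan
\[
\uOmega^p_S(\log \Sigma|_S) \to R\pi_* \uOmega^p_E(\log \pi^{-1}(\Sigma)|_E) \leftarrow R\pi_* \uOmega^p_{\widetilde{X}}(\log \pi^{-1}(\Sigma)),
\]
so the fibers of the two horizontal arrows in the associated pullback square are naturally quasi-isomorphic.

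Next I would invoke Definition \ref{def_pair_log_poles} at both ends of the square. For the pair $(X, S)$ with poles along $\Sigma$, the fiber of the top arrow $\uOmega^p_X(\log \Sigma) \to \uOmega^p_S(\log \Sigma|_S)$ is by definition $\uOmega^p_{X, S}(\log \Sigma)$. Applying $R\pi_*$ to the defining triangle for the pair $(\widetilde{X}, E)$ with poles along $\pi^{-1}(\Sigma)$ identifies the fiber of the bottom arrow with $R\pi_* \uOmega^p_{\widetilde{X}, E}(\log \pi^{-1}(\Sigma))$. Equating these two fibers will give the desired quasi-isomorphism.

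The only real thing to check is that the arrows appearing in the Mayer--Vietoris triangle of Proposition \ref{lem-irrtl-triangle} are in fact the natural functorial restriction maps, so that they line up compatibly with the defining triangles of the pair log complexes on both sides. This compatibility is already baked into the construction via functoriality of the log Du Bois complex \cite[Theorem 1.2.2]{Kov02}, so I do not expect any serious obstacle; the proof reduces to the formal fiber-comparison argument outlined above.
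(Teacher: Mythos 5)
Your proposal is correct and follows essentially the same route as the paper: both deduce from Proposition~\ref{lem-irrtl-triangle} (via Lemma~\ref{lem-rlim}) that the two fibers (equivalently, cones) in the homotopy pullback square are quasi-isomorphic, and then identify those fibers with $\uOmega^p_{X,S}(\log\Sigma)$ and $R\pi_*\uOmega^p_{\widetilde{X},E}(\log\pi^{-1}(\Sigma))$ using Definition~\ref{def_pair_log_poles} and the exactness of $R\pi_*$.
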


\begin{proof}
The triangle in Lemma \ref{lem-irrtl-triangle} is precisely equivalent to the cones of the two maps
\begin{align*}
    \uOmega^\kdot_{X}(\log \Sigma)\to& \uOmega^\kdot_S(\log \Sigma|_S), \\ 
    R\pi_* \uOmega^\kdot_{\widetilde{X}}(\log \pi^{-1}(\Sigma)) \to& R\pi_* \uOmega^\kdot_{E}(\log \pi^{-1}(\Sigma)|_E)
\end{align*}
being quasi-isomorphic. Therefore, this follows from definition \ref{def_pair_log_poles}.
\end{proof}

\begin{proof}[Proof of Theorem \ref{thm-irrtl-pushforward}]
    % Sketch (maybe break into more lemma): first cite a similar argument to DB81, or argue by extending partial cubical hyperresolution to get
    % \[
    % \uOmega^p_X(\log \Sigma) \qis R\pi_* \Omega^p_{\widetilde{X}}(\pi^{-1}(\Sigma)).
    % \]
    % Then form the triangle
    % \[
    % \uOmega^p_X(\log \Sigma)\to \uOmega^p_S(\log \Sigma\cap S) \oplus R\pi_* \Omega^p_{\widetilde{X}}(\log \pi^{-1}(\Sigma)) \to R\pi_* \Omega^p_{E}(\log \pi^{-1}(\Sigma)\cap E),
    % \]
    % from the two resolution diagram of $\widetilde{X}\to X$ with discriminant $S$. Then use 2.2 to get
    % \[
    % \uOmega^p_{X, S}(\log \Sigma)\qis R\pi_*\Omega^p_{\widetilde{X}, E}(\log \pi^{-1}(\Sigma)) = R\pi_*\Omega^p_{\widetilde{X}, E}(\log \widetilde{\Sigma})
    % \]
    % where the last step is basically definition.
    
Given $\pi$ as in Notation \ref{notation-pair}, we compute using Corollary \ref{cor-irrtl-psuhforward}
    \begin{align*}
    \bbD_X(\uOmega^{n-p}_{X,S}(\log \Sigma))
    &\qis R\pi_* \bbD_{\widetilde{X}}(\Omega^{n-p}_{\widetilde{X}}(\log \pi^{-1}\Sigma\cup E)(-E)) \\
    &\qis R\pi_* \Omega^p_{\widetilde{X}}(\log \pi^{-1}\Sigma\cup E)(E - (\pi^{-1}\Sigma\cup E)) \\
    &\qis R\pi_* \Omega^p_{\widetilde{X}}(\log \pi^{-1}\Sigma\cup E)(-\widetilde{\Sigma}) \\
    &\qis R\pi_* \Omega^p_{\widetilde{X},\widetilde{\Sigma}}(\log E)
    \end{align*}
    
Dualizing the defining triangle in \ref{def_pair_log_poles} for $(X, S, \Sigma)$, we have
    \[
    \bbD_X(\uOmega^{n-p}_{S}(\log \Sigma|_{S})) \to
    \bbD_X(\uOmega^{n-p}_X(\log \Sigma)) \to
    \bbD_X(\uOmega^{n-p}_{X,S}(\log \Sigma)) \xrightarrow{+1}.
    \]
For $n-p\ge d=\dim S$, we have $\uOmega^{n-p}_S(\log \Sigma|_S)=0$, so 
\[
\bbD_X(\uOmega^{n-p}_X(\log \Sigma)) \qis \bbD_X(\uOmega^{n-p}_{X,S}(\log \Sigma))\qis R\pi_* \Omega^p_{\widetilde{X},\widetilde{\Sigma}}(\log E),
\]
as needed.
\end{proof}

%\newpage

\section{Singularities}

Let $\Omega^p_{X,\Sigma}$ be the kernel of the natural map $\Omega_X^p\to \Omega_{\Sigma}^p / \Tors(\Omega_{\Sigma}^p)$, where $\Tors(\Omega_{\Sigma}^p)$ denotes its torsion subsheaf. By definition we have a short exact sequence,

\begin{equation} \label{pair-omega}
0 \to \Omega^p_{X,\Sigma} \to \Omega_X^p \to \Omega_{\Sigma}^p / \Tors(\Omega_{\Sigma}^p) \to 0.
\end{equation}

We thus have a natural map $\Omega^p_{X,\Sigma}\to \uOmega^p_{X, \Sigma}$ for every $p$. %We will see in Proposition \ref{prop:map} that there is also a natural map $\uOmega^p_{X, \Sigma} \to \bbD_X(\uOmega^{n-p}_{X}(\log \Sigma))$. 
Note that when $(X, \Sigma)$ is snc, we have $\Omega^p_{X,\Sigma} = \Omega^p_X(\log \Sigma)(-\Sigma)$.

There is a natural map between the two objects of interest for Du Bois and rational pairs, see also \cite[Lemma 3.12]{FL22}.

\begin{proposition}\label{prop:map}
For all $0\le p\le n$, there exists a natural map $\uOmega^p_{X,\Sigma} \to \bbD_X(\uOmega^{n-p}_{X}(\log \Sigma))$.
\end{proposition}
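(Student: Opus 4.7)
My plan is to build the natural map by adjunction from a wedge-product pairing, propagating the classical residue pairing from the snc case to the general case via a hyperresolution.

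In the snc case, when $X$ is smooth and $\Sigma$ is snc, we have $\uOmega^p_{X,\Sigma}\qis \Omega^p_X(\log\Sigma)(-\Sigma)$ as noted in the excerpt, and $\uOmega^{n-p}_X(\log\Sigma)\qis \Omega^{n-p}_X(\log\Sigma)$. The wedge product
\[
\Omega^p_X(\log\Sigma)(-\Sigma)\otimes \Omega^{n-p}_X(\log\Sigma)\to \Omega^n_X(\log\Sigma)(-\Sigma) = \omega_X
\]
is the classical perfect residue pairing, which by adjunction produces a natural isomorphism $\uOmega^p_{X,\Sigma}\qis \bbD_X(\uOmega^{n-p}_X(\log\Sigma))$ in this case.

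For general $(X,\Sigma)$, I would take a good cubical hyperresolution $\pi_\kdot:(X_\kdot,\Sigma_\kdot)\to (X,\Sigma)$ with each $X_\alpha$ smooth and $\Sigma_\alpha$ either snc, empty, or all of $X_\alpha$. Applying Lemma \ref{lem-pair-log-poles-hyperres} twice---once to the pair $(X,\Sigma)$ with $Z=0$, and once to the pair $(X,\emptyset)$ with $Z=\Sigma$---gives
\[
\uOmega^p_{X,\Sigma}\qis R(\pi_\kdot)_*\Omega^p_{X_\kdot,\Sigma_\kdot}, \qquad \uOmega^{n-p}_X(\log\Sigma)\qis R(\pi_\kdot)_*\Omega^{n-p}_{X_\kdot}(\log\Sigma_\kdot).
\]
On each smooth piece $X_\alpha$, the wedge product
\[
\Omega^p_{X_\alpha,\Sigma_\alpha}\otimes \Omega^{n-p}_{X_\alpha}(\log\Sigma_\alpha)\to \Omega^n_{X_\alpha}(\log\Sigma_\alpha)(-\Sigma_\alpha)
\]
vanishes when $\dim X_\alpha<n$ for degree reasons and lands in $\omega_{X_\alpha}$ on the top-dimensional components. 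Composing the natural K\"unneth-type map $R(\pi_\kdot)_*A\otimes^L R(\pi_\kdot)_*B \to R(\pi_\kdot)_*(A\otimes^L B)$ with this piecewise wedge and with the Grothendieck trace $R(\pi_\kdot)_*\omega_{X_\kdot}\to \omega_X^\kdot[-n]$ (well-defined because only top-dimensional components contribute, and for those the trace lands naturally in $\omega_X^\kdot[-n]$) gives a natural pairing
\[
\uOmega^p_{X,\Sigma}\otimes^L \uOmega^{n-p}_X(\log\Sigma)\to \omega_X^\kdot[-n],
\]
and adjunction then produces the desired map $\uOmega^p_{X,\Sigma}\to \bbD_X(\uOmega^{n-p}_X(\log\Sigma))$.

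The main obstacle will be verifying that this construction descends to a well-defined, canonical morphism in $D(X)$---independent of the hyperresolution, compatible with the cubical face maps of $X_\kdot$, and compatible with the Grothendieck trace on each stratum. I expect this to follow from the filtered hyperresolution framework in \cite[\S 10.6]{Kol13} together with the independence theorem for filtered log complexes \cite[Theorem 6.3]{DB81}, in the same spirit as the cubical cofinality argument used in the proof of Lemma \ref{lem-log-partial-hyperres}.
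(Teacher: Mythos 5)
Your construction builds a global pairing $\uOmega^p_{X,\Sigma}\otimes^L\uOmega^{n-p}_X(\log\Sigma)\to\omega_X^\kdot[-n]$ out of piecewise residue pairings on a cubical hyperresolution; the paper does something considerably simpler and avoids gluing altogether. It fixes a single strong log resolution $\pi\colon\widetilde X\to X$, takes the functoriality map $\uOmega^{n-p}_X(\log\Sigma)\to R\pi_*\Omega^{n-p}_{\widetilde X}(\log\pi^{-1}(\Sigma))$, dualizes using Grothendieck duality for $\pi$ to get $R\pi_*\bbD_{\widetilde X}(\Omega^{n-p}_{\widetilde X}(\log\pi^{-1}(\Sigma)))\to\bbD_X(\uOmega^{n-p}_X(\log\Sigma))$, identifies the source with $R\pi_*\Omega^p_{\widetilde X,\pi^{-1}(\Sigma)}$ via the snc duality on the smooth variety $\widetilde X$ (the only place the residue pairing is used), and precomposes with the functoriality map $\uOmega^p_{X,\Sigma}\to R\pi_*\Omega^p_{\widetilde X,\pi^{-1}(\Sigma)}$.

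Your route requires two pieces of machinery that you have not set up and that are not available off the shelf. First, the ``K\"unneth-type map'' $R(\pi_\kdot)_*A\otimes^L R(\pi_\kdot)_*B\to R(\pi_\kdot)_*(A\otimes^L B)$: the cubical pushforward is a totalization (a homotopy limit over the cube), so this map needs something like an Alexander--Whitney structure on the cubical totalization, not just lax monoidality of a single $Rf_*$. Second, the trace $R(\pi_\kdot)_*\omega_{X_\kdot}\to\omega_X^\kdot[-n]$: the pieces $X_\alpha$ live in varying dimensions, and you need the projection onto the top stratum followed by the trace on $X_0$ to be a well-defined morphism of complexes compatible with the cubical differentials and with the K\"unneth structure; ``only top-dimensional components contribute'' is a statement about where the wedge pairing is nonzero, not a construction of the trace morphism. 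These are precisely the issues your closing paragraph flags, so you are aware of them --- but they are where the proof actually lives, and they are avoided entirely by the paper's approach: apply snc duality once on the single smooth variety $\widetilde X$ and let functoriality of both Du Bois complexes, together with Grothendieck duality for the one morphism $\pi$, do the rest.
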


\begin{proof}
Let $\pi:\widetilde{X} \to X$ be a log resolution as in Notation \ref{notation-pair}. By functoriality of the log Du Bois complex, we have $\uOmega^{n-p}_{X}(\log \Sigma)\to R\pi_* \Omega^{n-p}_{\widetilde{X}}(\log \pi^{-1}(\Sigma))$. Dualizing and apply Grothendieck duality, we obtain
\[
%R\pi_* (\Omega^p_{\widetilde{X},\widetilde{\Sigma}}(\widetilde{\Sigma})) \isom 
R\pi_*\bbD_{\widetilde{X}}(\Omega^{n-p}_{\widetilde{X}}(\log \pi^{-1}(\Sigma))) \to 
\bbD_X(\uOmega^{n-p}_{X}(\log \Sigma)).
\]
But note that $\bbD_{\widetilde{X}}(\Omega^{n-p}_{\widetilde{X}}(\log \pi^{-1}(\Sigma)))\isom \Omega^p_{\widetilde{X},\pi^{-1}(\Sigma)}$. We may pre-compose with the map $\uOmega^{p}_{X,\Sigma}\to R\pi_* \Omega^p_{\widetilde{X}, \pi^{-1}(\Sigma)}$ from the functoriality of the Du Bois complex of pairs to obtain
% \[
% \Omega_{\widetilde{X}}^p(\log \widetilde{\Sigma})\isom 
% \Omega_{\widetilde{X}}^{n-p}(\log \widetilde{\Sigma})\,\check{}\otimes \Omega^n_{\widetilde{X}}(\log \widetilde{\Sigma}) \isom
% \Omega_{\widetilde{X}}^{n-p}(\log \widetilde{\Sigma})\,\check{}\otimes \omega_{\widetilde{X}}(\widetilde{\Sigma}) \isom
% \bbD_{\widetilde{X}}(\Omega^{n-p}_{\widetilde{X},\widetilde{\Sigma}}),
% \]
\[
\uOmega^p_{X,\Sigma} \to 
R\pi_* \Omega^p_{\widetilde{X},\pi^{-1}(\Sigma)} \to 
R\pi_*\bbD_{\widetilde{X}}(\Omega^{n-p}_{\widetilde{X}}(\log \pi^{-1}(\Sigma))) \to 
\bbD_X(\uOmega^{n-p}_{X}(\log \Sigma)).
\]
By the usual factorization arguments, this map is independent of the chosen resolution.
\end{proof}

We are ready to state the main definitions.

\begin{definition}\label{def-sings}
Let $(X, \Sigma)$ be a reduced pair (see Definition \ref{def-pair}).
\begin{enumerate}    
    \item $(X, \Sigma)$ has \emph{pre-$m$-Du Bois} singularities if $h^i(\uOmega^p_{X,\Sigma})=0$ for all $i \ge 0$ and $0\le p\le m$.

    \item $(X, \Sigma)$ has \emph{weakly $m$-Du Bois} singularities if it is pre $m$-Du Bois, $X$ is seminormal and $\tOmega_{X, \Sigma}^p$ is $S_2$ for $p \le m$.

    \item $(X, \Sigma)$ has \emph{$m$-Du Bois} singularities if it is weakly $m$-Du Bois, $\tOmega_{X, \Sigma}^p$ is reflexive for $p\le m$, and $\codim_X(\Sing(X,\Sigma)) \ge 2m + 1$.

    \item $(X,\Sigma)$ has \emph{strict-$m$-Du Bois} singularities if the natural map $\Omega^p_{X,\Sigma} \to \uOmega^p_{X,\Sigma}$ is a quasi-isomorphism for all $0\le p\le m$.
\end{enumerate}
\end{definition}

\begin{remark}
    Note that the condition that $\ot$ be $S_2$ forces $\Sigma$ to be a divisor when $X$ is seminormal, since for $p = 0$, $\ot$ is the ideal sheaf of $\Sigma$ (see the proof of \ref{prop-defn-recovers}).
\end{remark}

\begin{definition}\label{def-rat-sings}
Let $(X, \Sigma)$ be a pure-dimensional reduced pair.

\begin{enumerate}    
    \item $(X, \Sigma)$ has \emph{pre-$m$-rational} singularities if $h^i(\bbD_X(\uOmega_{X}^{n-p}(\log \Sigma))) = 0$ for all $i \ge 0$ and $0\le p\le m$.
\end{enumerate}

For (2) and (3), assume that $X$ is normal and $\Sigma$ is a divisor.

\begin{enumerate}
    \setcounter{enumi}{1}
    \item $(X,\Sigma)$ has \emph{$m$-rational} singularities if it is pre $m$-rational, $h^0(\bbD_X(\uOmega_{X}^{n-p}(\log \Sigma)))$ is reflexive for $p\le m$, and 
    $\codim_X(\Sing(X,\Sigma)) \ge 2m + 2$.
    
    \item $(X,\Sigma)$ has \emph{strict-$m$-rational} singularities if the natural map $\Omega^p_{X,\Sigma} \to \bbD_X(\uOmega_{X}^{n-p}(\log \Sigma))$ (\ref{prop:map}) is a quasi-isomorphism for all $0\le p\le m$.

\end{enumerate}
\end{definition}

% \begin{definition}
% Let $(X, \Sigma)$ be an snc pair and let $E$ be a divisor such that $E + \Sigma$ is also snc. Then we define $\Omega^p_{X,\Sigma}(\textup{log } E) = \Omega^p(\textup{log } E + \Sigma)(-\Sigma)$.
% \end{definition}

% \begin{definition}\label{def-irr-complex}
% Let $(X,\Sigma)$ be a pure-dimensional reduced pair and assume $\Sigma$ is a divisor. We say $(X,\Sigma)$ has \emph{pre-$m$-rational} singularities if $R^i\pi_* \Omega^p_{\widetilde{X},\widetilde{\Sigma}}(\textup{log } E) = 0 $ for all $i > 0$ and all $0 \le p \le m$ for some thrifty resolution $\pi: \widetilde{X} \rightarrow X$.
% \end{definition}

% $\Omega^p_{\widetilde{X},\widetilde{\Sigma}}$ is a subsheaf of $\Omega^p_{\widetilde{X},\widetilde{\Sigma}}(\textup{log } E)$ so we obtain a natural map

% \begin{equation} \label{DB to Rat}
% \ou \rightarrow R\pi_*\Omega^p_{\widetilde{X},\widetilde{\Sigma}} \rightarrow R\pi_* \Omega^p_{\widetilde{X},\widetilde{\Sigma}}(\textup{log } E)
% \end{equation}

% \begin{definition}\label{defn-strict}
% Let $(X, \Sigma)$ be a pure-dimensional reduced pair. Cf. \cite{SVV23} for a discussion on the lci assumption.

% \begin{enumerate}
%     \item We say that $(X,\Sigma)$ has \emph{strict-$m$-Du Bois} singularities if the natural map $\Omega^p_{X,\Sigma} \to \uOmega^p_{X,\Sigma}$ is a quasi-isomorphism for all $0\le p\le m$.
    
%     \item We say that $(X,\Sigma)$ has \emph{strict-$m$-rational} singularities if the natural map $\Omega^p_{X,\Sigma} \to \bbD_X(\uOmega_X^{n-p}(\log \Sigma))$ is a quasi-isomorphism for all $0\le p\le m$.
% \end{enumerate}
% \end{definition}

\begin{remark}\label{rmk-db-defn-divisor}
    Note that in the case when $X$ is smooth and $\Sigma$ is an snc divisor, $\Omega^p_{X, \Sigma} = \Omega^p_X(\log \Sigma)(-\Sigma)\qis \bbD_X(\Omega^{n-p}_{X}(\log \Sigma))$. The definition for higher rational singularities therefore needs to involve the logarithmic Du Bois complex just to preserve the smooth case.

    By Theorem \ref{thm-irrtl-pushforward}, $\bbD_X(\uOmega^{n-p}_X(\log \Sigma))\qis R\pi_* \Omega^p_{\widetilde{X}, \widetilde{\Sigma}}(\log E)$, for $p\le \codim_X(\Sing(X, \Sigma))$, using the notation in the theorem. Thus in this case we can check the various $m$-rational singularities on  the pair $(X, \Sigma)$ from a log resolution, akin to the original definition of rational singularities.
\end{remark}

These definitions recovers the existing definition for higher Du Bois and rational singularities in the literature. When $p=0$, we can also precisely say how each notion is related.

\begin{proposition}\label{prop-defn-recovers}
    Let $(X,\Sigma)$ be a reduced pair.
    \begin{enumerate}
        \item If $\Sigma=\emptyset$, all of the notions in Definition \ref{def-sings}, \ref{def-rat-sings} agrees with those defined in \cite{Kov25, SVV23}.

        \item The notion of strict-$0$-Du Bois pair agrees with Du Bois pair in the usual sense.

        \item If $X$ is semi-normal, the notion of pre-$0$-Du Bois pair agrees with Du Bois pair.
        
        \item If $\Sigma$ is a divisor, the notions of weakly $0$-Du Bois, $0$-Du Bois, strict-$0$-Du Bois, and Du Bois pair all agree.

        \item The notions of $0$-rational, strict-$0$-rational and rational pair in the sense of \cite[2.80]{Kol13} agree.
        
        \item The notions in (5) are further equivalent to $(X, \Sigma)$ being normal (cf. \cite[3.1]{Kov11}) and pre-$0$-rational.
    \end{enumerate}
\end{proposition}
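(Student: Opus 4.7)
The approach is to unwind each notion via the defining triangle
\[
\uOmega^\kdot_{X,\Sigma}\to \uOmega^\kdot_X\to \uOmega^\kdot_{\Sigma}\xrightarrow{+1}
\]
together with two standard facts: $\Omega^0_{X,\Sigma} = I_{\Sigma/X}$ is the ideal sheaf of $\Sigma$ (from \eqref{pair-omega}, since $\Omega^0_\Sigma = \scrO_\Sigma$ is torsion free when $\Sigma$ is reduced), and $\tOmega^0_Y$ is the structure sheaf of the seminormalization of a reduced scheme $Y$, so in particular $\scrO_Y\hookrightarrow\tOmega^0_Y$ is always injective.

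Parts (1) and (2) are pure definition-chasing. When $\Sigma=\emptyset$, the triangle degenerates to $\uOmega^\kdot_{X,\Sigma}\qis\uOmega^\kdot_X$, and similarly $\uOmega^\kdot_X(\log\emptyset) = \uOmega^\kdot_X$, so each of the $m$-Du Bois and $m$-rational notions reduces to the corresponding definition in \cite{SVV23, Kov25}. For (2), the strict-$0$-Du Bois condition $I_{\Sigma/X}\qis \uOmega^0_{X,\Sigma}$ is by definition the Du Bois pair of \cite{Kov11}.

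For (3), pre-$0$-Du Bois gives $\uOmega^0_{X,\Sigma}\qis\tOmega^0_{X,\Sigma}$. Seminormality of $X$ identifies $\tOmega^0_X\cong\scrO_X$, and taking $h^0$ of the defining triangle yields $\tOmega^0_{X,\Sigma} = \ker(\scrO_X \to \tOmega^0_\Sigma)$. Since $\scrO_\Sigma\hookrightarrow\tOmega^0_\Sigma$, this kernel is exactly $I_{\Sigma/X}$, so $\uOmega^0_{X,\Sigma}\qis I_{\Sigma/X}$, i.e., $(X,\Sigma)$ is a Du Bois pair. Statement (4) then combines (2) and (3): under the hypothesis that $\Sigma$ is a divisor, $I_{\Sigma/X}$ is reflexive and $S_2$, and $\codim_X\Sing(X,\Sigma)\ge 1$ is automatic since the non-snc locus of a reduced divisor has codimension $\ge 2$ in $X$ while $\Sing(X)$ has codimension $\ge 1$; hence the extra conditions in the definitions of weakly $0$-Du Bois and $0$-Du Bois are vacuous, and all four notions collapse.

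For (5), Theorem \ref{thm-irrtl-pushforward} identifies $\bbD_X(\uOmega^n_X(\log\Sigma))\qis R\pi_*\scrO_{\widetilde{X}}(-\widetilde{\Sigma})$ on a log resolution. Pre-$0$-rational becomes $R^i\pi_*\scrO_{\widetilde{X}}(-\widetilde{\Sigma}) = 0$ for $i\ge 1$, and strict-$0$-rational further gives $\pi_*\scrO_{\widetilde{X}}(-\widetilde{\Sigma}) = I_{\Sigma/X}$ via Proposition \ref{prop:map}; together these are Kollár's definition of a rational pair in \cite[2.80]{Kol13}. For (6), normality of $(X,\Sigma)$ in the sense of \cite{Kov11} gives $X$ normal and $\Sigma$ a divisor, and combined with pre-$0$-rational this forces the map of Proposition \ref{prop:map} to be a quasi-isomorphism, hence strict-$0$-rational, hence rational by (5). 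The main obstacle is part (3): verifying that the natural map $I_{\Sigma/X}\to\uOmega^0_{X,\Sigma}$ arising from \eqref{pair-omega} agrees functorially with the map built into the standard definition of a Du Bois pair, which requires a careful diagram chase using the functoriality of the Du Bois complex.
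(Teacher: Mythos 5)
Your overall approach mirrors the paper's, and parts (1)–(3) and (6) are essentially correct as written. The main gap is in part (5): you establish that (pre-$0$-rational) $+$ (strict-$0$-rational) together recover Koll\'ar's rational pair via $\bbD_X(\uOmega_X^n(\log\Sigma))\qis R\pi_*\scrO_{\widetilde{X}}(-\widetilde{\Sigma})$, but the statement also asserts that $0$-rational is equivalent to strict-$0$-rational, and you never close that loop. The nontrivial direction is $0$-rational $\Rightarrow$ strict-$0$-rational: one must observe that $h^0(\bbD_X(\uOmega_X^n(\log\Sigma)))$ and $\Omega^0_{X,\Sigma}=\scrO_X(-\Sigma)$ agree on the snc locus of $(X,\Sigma)$, that $\scrO_X(-\Sigma)$ is always reflexive (as $X$ is normal and $\Sigma$ a divisor), and that the $0$-rational hypothesis makes $h^0(\bbD_X(\cdot))$ reflexive too, so the two reflexive sheaves agreeing in codimension one must coincide. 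You also omit the generalized Grauert--Riemenschneider input (\cite[10.38(1)]{Kol13}) needed to verify that the $\omega$-condition in the definition of a thrifty rational resolution is automatic, so that $R\pi_*\scrO_{\widetilde{X}}(-\widetilde{\Sigma})\qis\scrO_X(-\Sigma)$ alone suffices to produce Koll\'ar's rational pair.

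A smaller imprecision appears in part (4): you assert that when $\Sigma$ is a divisor, $\scrI_\Sigma$ is automatically reflexive and $S_2$. This is not true on an arbitrary reduced $X$; it is the combination with the Du Bois pair hypothesis (which forces $X$ to be seminormal, hence identifies $\tOmega^0_{X,\Sigma}$ with $\scrI_\Sigma$ as in (3)) together with the fact that $\scrI_\Sigma$ being $S_2$ is \emph{equivalent} to $\Sigma$ being a divisor that makes the extra conditions collapse. The logic runs through the equivalence, not a one-way implication that holds in a vacuum. Finally, your closing remark flagging (3) as "the main obstacle" is misplaced: your argument for (3) is in fact complete and matches the paper; the substantive missing step is the reflexivity comparison in (5).
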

    
\begin{proof}
    The statement of (1) and (2) follows directly from definition. 
    
    If $X$ is seminormal, $\tOmega_{X,\Sigma}^0$ is equal to the kernel of the map

    $$ \OO_X \rightarrow \eta_* \OO_{\Sigma^{sn}} $$
    where $\eta: \Sigma^{sn} \rightarrow \Sigma$ is the seminormalization. This map factors through $\OO_\Sigma$ so its kernel is just $\mathscr{I}_{\Sigma} = \Omega^0_{X,\Sigma}$. This shows being pre-$0$-Du Bois and Du Bois pair are equivalent when $X$ is semi-normal, which is (3). When $\Sigma$ is a divisor, this gives (4) since $\mathscr{I}_\Sigma$ being S2 is equivalent to $\Sigma$ being a divisor. (Note that the codimension condition trivially holds for $p=0$).

    For (5), first note that $h^0(\bbD_X(\uOmega_X^{n}(\log \Sigma)))$ agrees with $\Omega^0_{X, \Sigma} = \mathscr{I}_\Sigma$ on the snc locus of $(X, \Sigma)$. The latter is always reflexive, thus, as $X$ is normal, they agree when the former is. This shows that $0$-rational and strict-$0$-rational are equivalent. (Again note that the codimension condition is trivial since $X$ is normal).
    
    By Theorem \ref{thm-irrtl-pushforward}, $\bbD_X(\uOmega_X^{n}(\log \Sigma)) \qis R\pi_* \scrO_{\widetilde{X}}(-\widetilde{\Sigma})$ for a strong log resolution $\pi: \widetilde{X}\to X$. Since $\pi$ is a thrifty resolution, we always have $R^i\pi_*\omega_{\widetilde{X}}(\widetilde{\Sigma}) =0$ for all $i>0$ by generalized Grauert-Riemenschneider vanishing \cite[10.38(1)]{Kol13}. %Applying Grothendieck duality, we get $\uOmega_X^{n}(\log \Sigma) \qis R\pi_*\omega_{\widetilde{X}}(\widetilde{\Sigma})$. But since $h^i(\uOmega_X^{p}(\log \Sigma))=0$ for $i>n-p$, we see that $R^i\pi_*\omega_{\widetilde{X}}(\widetilde{\Sigma}) =0$ for all $i>0$. 
    Now if $(X,\Sigma)$ is strict-$0$-rational, we have $R\pi_*\scrO_{\widetilde{X}}(-\widetilde{\Sigma})\qis \scrO_X(-\Sigma)$. Therefore $\pi$ is thrifty rational resolution of $(X, \Sigma)$, and hence $(X, \Sigma)$ is a rational pair in the sense of Koll\'ar--Kov\'acs. Conversely, if $(X, \Sigma)$ is a rational pair, then every strong log resolution (which is thrifty) is a rational resolution by \cite[2.86]{Kol13}. Again using Theorem \ref{thm-irrtl-pushforward}, we have
    \[
    \bbD_X(\uOmega_X^{n}(\log \Sigma)) \qis R\pi_* \scrO_{\widetilde{X}}(-\widetilde{\Sigma}) \qis \scrO_X(-\Sigma),
    \]
    which shows (5). 
    
    Lastly, observe that by Theorem \ref{thm-irrtl-pushforward}, $(X, \Sigma)$ being normal is precisely $h^0(\bbD_X(\uOmega_X^{n}(\log \Sigma))) = \scrO_X(-\Sigma)$. This gives (6).
\end{proof}

\begin{proposition}\label{prop-lci-recover}
Let $(X, \Sigma)$ be a reduced pair with $\Sigma$ a divisor. Assume that $X$ and $\Sigma$ are local complete intersections. If $(X, \Sigma)$ has $m$-Du Bois (resp. $m$-rational) singularities then $(X, \Sigma)$ has strict-$m$-Du Bois (resp. strict-$m$-rational) singularities.
\end{proposition}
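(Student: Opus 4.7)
The plan is to reduce each claim to showing that a natural map of coherent sheaves is an isomorphism, and then apply reflexivity together with a Hartogs-type extension. The $m=0$ cases are already covered by Proposition \ref{prop-defn-recovers}(4) and (5), so we may assume $m \ge 1$.

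Since $(X, \Sigma)$ is in particular pre-$m$-Du Bois (resp.\ pre-$m$-rational), the complex $\uOmega^p_{X,\Sigma}$ (resp.\ $\bbD_X(\uOmega^{n-p}_X(\log \Sigma))$) is concentrated in cohomological degree zero for $0 \le p \le m$, agreeing there with $\tOmega^p_{X,\Sigma}$ (resp.\ $h^0(\bbD_X(\uOmega^{n-p}_X(\log \Sigma)))$). Hence the natural map from $\Omega^p_{X,\Sigma}$ (coming from (\ref{pair-omega}) in the Du Bois case and from Proposition \ref{prop:map} in the rational case) is a quasi-isomorphism if and only if it is an isomorphism of coherent sheaves. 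Both targets agree on the snc locus $U := X \setminus \Sing(X, \Sigma)$ with the locally free sheaf $\Omega^p_X(\log \Sigma)(-\Sigma)|_U = \Omega^p_{X,\Sigma}|_U$, using Lemma \ref{lem-pair-log-poles-hyperres} on the Du Bois side and Remark \ref{rmk-db-defn-divisor} on the rational side.

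By hypothesis, $X \setminus U$ has codimension $\ge 2m+1 \ge 3$ (resp.\ $\ge 2m+2 \ge 4$) in $X$, and the target sheaf is reflexive. Since two reflexive sheaves on the normal scheme $X$ that agree via a given map on an open subset whose complement has codimension $\ge 2$ must be globally isomorphic via that map, it suffices to show that $\Omega^p_{X, \Sigma}$ is reflexive on $X$. For this I would apply Lemma \ref{S2-seq-trick} to the defining exact sequence (\ref{pair-omega}): since $\Omega^p_\Sigma / \Tors(\Omega^p_\Sigma)$ is torsion-free on $\Sigma$ tautologically, the remaining input is that $\Omega^p_X$ is $S_2$ and torsion-free on $X$.

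This is precisely where the lci hypothesis enters: for lci $X$ with $\codim_X \Sing X \ge p+2$, $\Omega^p_X$ is reflexive via a standard depth computation using the conormal sequence. The required bound is supplied by $\codim_X \Sing X \ge \codim_X \Sing(X, \Sigma) \ge 2m+1 \ge 2p+1 \ge p+2$ for $1 \le p \le m$. The main obstacle I expect is locating and invoking a clean reference for this reflexivity of $\Omega^p_X$ on lci varieties with controlled singular locus (cf.\ the analogous non-pair statements used in \cite{MP25, SVV23}); once that is granted, the pair version follows from the formal assembly above.
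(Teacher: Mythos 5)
Your proof is correct and follows essentially the same route as the paper: handle $m=0$ via Proposition~\ref{prop-defn-recovers}, then for $m\ge 1$ use Lemma~\ref{S2-seq-trick} together with the reflexivity of $\Omega^p_X$ on lci varieties to show $\Omega^p_{X,\Sigma}$ is $S_2$ and torsion-free (hence reflexive, since $X$ is lci with singular locus of codimension $\ge 3$ and therefore normal), and conclude by comparing reflexive sheaves that agree with the target via the natural map on the big open set. The one ingredient you flagged as a possible obstacle is exactly what the paper supplies: the reflexivity of $\Omega^p_X$ for lci $X$ with controlled singular locus is cited from \cite[Corollary 3.1]{MV19}, so your "main obstacle" is just a missing citation rather than a gap in the argument.
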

\begin{proof}
    The case $m=0$ is covered by \ref{prop-defn-recovers}, so let $m\ge 1$. Suppose $X$ and $\Sigma$ are local complete intersections. In either case, we have $\codim_X(\Sing(X)) \ge \codim_X(\Sing(X,\Sigma)) \ge 2m + 1$. Therefore, by \cite[Corollary 3.1]{MV19}, $\Omega_X^p$ is reflexive. Now it follows from Lemma \ref{S2-seq-trick} and \ref{pair-omega} that $\Omega_{X,\Sigma}^p$ is reflexive for $p \le m$. 
    
    Now, when $(X, \Sigma)$ is $m$-Du Bois, this gives $\Omega_{X,\Sigma}^p = \ot$ for $p \le m$ since they agree away from a $\codim \ge 3$ locus. Therefore $(X,\Sigma)$ is strict-$m$-Du Bois. Similarly, when $(X, \Sigma)$ is $m$-rational, we have $\Omega_{X,\Sigma}^p = h^0(\bbD_X(\uOmega^{n-p}_X(\log \Sigma)))$, so $(X, \Sigma)$ is strict-$m$-rational.
\end{proof}

\subsection{Two Out of Three}

In this section, we see how a pair $(X, \Sigma)$ being $m$-Du Bois relates to $X$ and $\Sigma$ being $m$-Du Bois. First, recall the following basic property of Du Bois pairs. 

\begin{proposition} \label{230DB}
    Let $(X,\Sigma)$ be a reduced pair. If two of $X, \Sigma$ and $(X,\Sigma)$ are Du Bois then so is the third. 
\end{proposition}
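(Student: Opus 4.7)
The plan is to exploit the defining exact triangle
\[
\uOmega^\kdot_{X,\Sigma} \to \uOmega^\kdot_X \to \uOmega^\kdot_\Sigma \xrightarrow{+1}
\]
together with the short exact sequence of structure sheaves
\[
0 \to \mathscr{I}_\Sigma \to \OO_X \to \OO_\Sigma \to 0
\]
and then reduce the proposition to a purely diagrammatic statement in the derived category. Taking $0$-th graded pieces in the filtered category gives precisely the triangle $\uOmega^0_{X,\Sigma} \to \uOmega^0_X \to \uOmega^0_\Sigma \xrightarrow{+1}$, and by the functoriality that defines $\uOmega^\kdot_{X,\Sigma}$ (see \cite[\S 6]{Kol13}) the natural maps from the structure sheaves fit into a commutative diagram of triangles
\[
\begin{tikzcd}
\mathscr{I}_\Sigma \ar[r]\ar[d] & \OO_X \ar[r]\ar[d] & \OO_\Sigma \ar[r,"+1"]\ar[d] & {} \\
\uOmega^0_{X,\Sigma} \ar[r] & \uOmega^0_X \ar[r] & \uOmega^0_\Sigma \ar[r,"+1"] & {}.
\end{tikzcd}
\]

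From here the argument is a routine triangulated five-lemma: if any two of the three vertical arrows are quasi-isomorphisms, rotating the triangles and taking cones shows that the third vertical arrow has vanishing cone and is therefore a quasi-isomorphism as well. Since the three vertical arrows being quasi-isomorphisms correspond respectively to $(X,\Sigma)$, $X$, and $\Sigma$ being Du Bois, this yields the ``two-out-of-three'' statement.

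The only delicate point is to verify that the vertical maps in the diagram above are genuinely the comparison maps which, when quasi-isomorphisms, realize the Du Bois condition, and that they are compatible (i.e.\ the squares actually commute). This is the content of the functoriality of the Du Bois construction applied to the inclusion $\Sigma \hookrightarrow X$, as recorded in \cite[Theorem 1.2.2]{Kov02} and \cite[\S 6]{Kol13}. Once compatibility is in hand, no further calculation is required; the proposition follows immediately from the five-lemma in $D(X)$.
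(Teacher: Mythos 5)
Your proof is correct and matches the paper's argument: both use the map of triangles from $\mathscr{I}_\Sigma \to \OO_X \to \OO_\Sigma$ to $\uOmega^0_{X,\Sigma} \to \uOmega^0_X \to \uOmega^0_\Sigma$ and conclude by a two-out-of-three cone argument. Your write-up is simply a more detailed unpacking of the same diagram the paper records.
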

This easily follows from the map of triangles,

\begin{center}
\begin{tikzcd}
\mathscr{I}_{\Sigma} \arrow[d] \arrow[r]    & \mathscr{O}_X \arrow[d] \arrow[r]  & \mathscr{O}_\Sigma \arrow[d] \arrow[r, "+1"]  & {} \\
{\underline{\Omega}^0_{X,\Sigma}} \arrow[r] & \underline{\Omega}^0_{X} \arrow[r] & \underline{\Omega}^0_{\Sigma} \arrow[r, "+1"] & {}
\end{tikzcd}
\end{center}

The same argument works for the strict-$m$-Du Bois case.

\begin{proposition} \label{23strict}
    Let $(X,\Sigma)$ be a reduced pair. If any of the following two holds, then so is the third:
    \begin{enumerate}
        \item $(X, \Sigma)$ is strict-$m$-Du Bois;
        \item $X$ is strict-$m$-Du Bois;
        \item $\Sigma$ is strict-$m$-Du Bois up to torsion, that is, $\Omega_\Sigma^p/\Tors(\Omega_\Sigma^p)\qis \uOmega^p_\Sigma$ for $p\le m$.
    \end{enumerate}
\end{proposition}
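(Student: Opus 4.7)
The plan is to mimic the argument sketched for Proposition \ref{230DB}, replacing structure sheaves by $p$-forms and replacing the ideal sheaf $\mathscr{I}_\Sigma$ by the pair-differentials $\Omega^p_{X,\Sigma}$. First, the defining short exact sequence \eqref{pair-omega} yields one exact triangle
\[
\Omega^p_{X,\Sigma} \to \Omega^p_X \to \Omega^p_\Sigma/\Tors(\Omega^p_\Sigma) \xrightarrow{+1},
\]
and the defining triangle of the Du Bois complex of a pair, after passing to the $p$-th graded piece, yields the other,
\[
\uOmega^p_{X,\Sigma} \to \uOmega^p_X \to \uOmega^p_\Sigma \xrightarrow{+1}.
\]

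Next, I would assemble these two triangles into a commutative ladder using the natural comparison maps $\Omega^p \to \uOmega^p$ on $X$, on $\Sigma$ (noting that this comparison kills $\Tors(\Omega^p_\Sigma)$ and hence factors through the torsion-free quotient), and on the pair (the map introduced right after \eqref{pair-omega}):
\begin{center}
\begin{tikzcd}
\Omega^p_{X,\Sigma} \arrow[d] \arrow[r]    & \Omega^p_X \arrow[d] \arrow[r]  & \Omega^p_\Sigma/\Tors(\Omega^p_\Sigma) \arrow[d] \arrow[r, "+1"]  & {} \\
\uOmega^p_{X,\Sigma} \arrow[r] & \uOmega^p_{X} \arrow[r] & \uOmega^p_{\Sigma} \arrow[r, "+1"] & {}
\end{tikzcd}
\end{center}
Conditions (1), (2), and (3) are precisely the statements that the left, middle, and right vertical arrows, respectively, are quasi-isomorphisms for all $0\le p\le m$. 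Granting commutativity, the standard ``two out of three'' principle for morphisms of exact triangles—equivalently, a diagram chase on the long exact sequences of cohomology sheaves via the five lemma—immediately gives the conclusion.

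The main obstacle is verifying commutativity of the right-hand square, which amounts to naturality of the comparison map $\Omega^\bullet \to \uOmega^\bullet$ along the closed immersion $\Sigma\hookrightarrow X$. This I would handle by fixing a cubical hyperresolution of the $1$-cubical variety $\Sigma\to X$; it simultaneously induces hyperresolutions of $X$ and $\Sigma$, and all three comparison maps then arise from the component-wise identities $\Omega^p_{X_\alpha}\to \Omega^p_{X_\alpha}$ on the smooth strata, making commutativity automatic. Commutativity of the left-hand square follows from the construction of $\Omega^p_{X,\Sigma} \to \uOmega^p_{X,\Sigma}$ recorded after \eqref{pair-omega}, together with the snake lemma applied to the two horizontal short exact sequences.
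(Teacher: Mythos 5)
Your proposal is correct and takes essentially the same approach as the paper: both use the map of triangles comparing the K\"ahler and Du Bois short exact sequences for the pair, with the ``two out of three'' principle for quasi-isomorphisms of exact triangles. You simply spell out the commutativity checks that the paper leaves implicit.
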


\begin{proof}
%If $\Sigma$ is strict-$m$-Du Bois then $\Omega_\Sigma^p$ is torsion free. Now 
The statement follows from the map of triangles,
    
\begin{center}
\begin{tikzcd}[row sep=scriptsize]
{\Omega}^p_{X,\Sigma} \arrow[d] \arrow[r]    & {\Omega}^p_{X} \arrow[d] \arrow[r]  & {\Omega}^p_{\Sigma}/\Tors(\Omega^p_\Sigma) \arrow[d] \arrow[r, "+1"]  & {} \\
{\underline{\Omega}^p_{X,\Sigma}} \arrow[r] & \underline{\Omega}^p_{X} \arrow[r] & \underline{\Omega}^p_{\Sigma} \arrow[r, "+1"] & {}
\end{tikzcd}\end{center}\end{proof}

\begin{remark}
    In particular, if $(X, \Sigma)$ is snc, then it is strict-$m$-Du Bois for all $m$. This example also shows that in general $(X, \Sigma)$ and $X$ both being strict-$m$-Du Bois does not imply $\Sigma$ is strict-$m$-Du Bois, as $\Omega^p_\Sigma$ has nontrivial torsion.
\end{remark}

We also have the fairly obvious partial result for pre-$m$-Du Bois singularities,

\begin{proposition} \label{23mDB}
    Let $(X, \Sigma)$ be a pre-$m$-Du Bois pair. Then $X$ is pre-$m$-Du Bois if and only if $\Sigma$ is pre-$m$-Du Bois.
\end{proposition}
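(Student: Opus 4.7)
The plan is to read off the equivalence directly from the defining triangle for the Du Bois complex of a pair. By construction (see \cite[\S 6]{Kol13}), the filtered complex $\uOmega^\kdot_{X,\Sigma}$ sits in an exact triangle
\[
\uOmega^\kdot_{X,\Sigma} \to \uOmega^\kdot_X \to \uOmega^\kdot_\Sigma \xrightarrow{+1}.
\]
Passing to graded pieces (with the shift built into the convention of Notation~\ref{notation-pair}) yields, for every $p\ge 0$, an exact triangle
\[
\uOmega^p_{X,\Sigma} \to \uOmega^p_X \to \uOmega^p_\Sigma \xrightarrow{+1}.
\]

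The main step is then to take the long exact sequence in cohomology sheaves. For each $p\le m$ and each $i\ge 1$ this reads
\[
h^i(\uOmega^p_{X,\Sigma}) \to h^i(\uOmega^p_X) \to h^i(\uOmega^p_\Sigma) \to h^{i+1}(\uOmega^p_{X,\Sigma}).
\]
Under the hypothesis that $(X,\Sigma)$ is pre-$m$-Du Bois, the flanking terms $h^i(\uOmega^p_{X,\Sigma})$ and $h^{i+1}(\uOmega^p_{X,\Sigma})$ vanish for all $i\ge 1$ and all $p\le m$. Hence the middle map is an isomorphism
\[
h^i(\uOmega^p_X) \isom h^i(\uOmega^p_\Sigma),
\]
so $h^i(\uOmega^p_X)=0$ if and only if $h^i(\uOmega^p_\Sigma)=0$, which by definition gives the claimed biconditional.

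There is no substantive obstacle: the content of the proposition is simply that knowing the vanishing for the pair lets one compare the vanishing for $X$ and $\Sigma$ through the connecting triangle, and this is purely a diagram-chase in the long exact sequence.
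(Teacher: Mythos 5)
Your proof is correct and takes essentially the same approach as the paper: the paper's argument is exactly the long exact sequence on cohomology sheaves of the triangle $\uOmega^p_{X,\Sigma} \to \uOmega^p_X \to \uOmega^p_\Sigma \xrightarrow{+1}$, using the vanishing of the pair terms for $i\ge 1$ to identify $h^i(\uOmega^p_X)$ with $h^i(\uOmega^p_\Sigma)$.
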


\begin{proof}
This follows directly from the long exact sequence for $i\ge 1$
\begin{center}
\begin{tikzcd}
h^i({{\uOmega}^p_{X,\Sigma}}) \arrow[r] & h^i(\underline{\Omega}^p_{X}) \arrow[r] & h^i(\underline{\Omega}^p_{\Sigma}) \arrow[r] &
h^{i+1}({{\uOmega}^p_{X,\Sigma}}).
\end{tikzcd}
\end{center}
\end{proof}

What we will see in Example \ref{23-counterexample} is that \ref{23strict} does not hold for (non-strict) $m$-Du Bois singularities, even in the weakest form. Specifically, we construct a pair where $X$ and $\Sigma$ are 1-Du Bois but $(X, \Sigma)$ is not even pre-$1$-Du Bois.

% It turns out that Proposition \ref{230DB} fails even for pre-0 Du Bois singularities, as the following example shows.

\begin{blank} \label{23counterexample}
Suppose that for some $0\le p\le n$, $X$ satisfies $\Omega^p_X\isom h^0(\uOmega^p_X)$ and $h^1(\uOmega^p_X)=0$, and that the natural map $\phi:\Omega^p_\Sigma\to h^0(\uOmega^p_\Sigma)$ is not surjective. The condition on $X$ is easily met if it is smooth, or just strict-$p$-Du Bois. %, and the condition on $\Sigma$ holds if it is pre-$p$-Du Bois but not strict-$p$-Du Bois.
Consider the following commutative diagram where the bottom row is exact
\begin{equation}\label{23Diagram}
\begin{tikzcd}
    \Omega^p_X \ar[r] \ar[d, "\isom"] & \Omega^p_\Sigma \ar[d, "\phi"] & & \\
    h^0(\uOmega^p_X) \ar[r, "\psi"] & h^0(\uOmega^p_\Sigma) \ar[r] & h^1(\uOmega^p_{X, \Sigma}) \ar[r] & 0.
\end{tikzcd}
\end{equation}
%Since $h^0(\uOmega^p_\Sigma)$ is torsion-free, $\phi$ is injective as it is generically an isomorphism. Consequently, $\phi$ therefore 
By assumption on $\phi$, we have that $\psi$ must also not be surjective. This implies that $h^1(\uOmega_{X, \Sigma}^p)\ne 0$. In particular, the pair $(X, \Sigma)$ is not pre-$p$-Du Bois.
\end{blank}

\begin{example}
Notation as in \ref{23counterexample}, let $X=\bbP^2$ and $\Sigma\subseteq X$ a cuspidal cubic. Note that $\uOmega^0_\Sigma\qis \nu_*\scrO_{\bbP^1}$, where $\nu:\bbP^1\to \Sigma$ is the normalization. In particular, $\Sigma$ is pre-$0$-Du Bois and locally $h^0(\uOmega_\Sigma^p)$ is the integral closure of $\scrO_\Sigma$. The hypothesis is therefore met for $p=0$, so $(X, \Sigma)$ is not pre-$0$-Du Bois even though both $X$ and $\Sigma$ are.
\end{example}

Despite this example, we can obtain the pre-$0$-Du Bois statement if we assume $\Sigma$ is semi-normal. But note that in this case all the $0$-Du Bois notions agree on $\Sigma$, see \cite[Proposition 4.5]{Kov25}.

\begin{proposition}
Let $(X,\Sigma)$ be a reduced pair. If $X, \Sigma$ are pre $0$-Du Bois and in addition $\Sigma$ is semi-normal, then $(X, \Sigma)$ is pre-$0$-Du Bois.
\end{proposition}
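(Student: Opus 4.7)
The plan is to apply the long exact sequence of cohomology sheaves to the defining exact triangle
\[
\uOmega^0_{X,\Sigma}\to \uOmega^0_X\to \uOmega^0_\Sigma\xrightarrow{+1}
\]
and extract the desired vanishing $h^i(\uOmega^0_{X,\Sigma})=0$ for all $i\ge 1$ in two cases.

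For $i\ge 2$, the segment
\[
h^{i-1}(\uOmega^0_\Sigma)\to h^i(\uOmega^0_{X,\Sigma})\to h^i(\uOmega^0_X)
\]
has both outer terms vanishing by the pre-$0$-Du Bois hypothesis on $X$ and $\Sigma$ (note $i-1\ge 1$), so $h^i(\uOmega^0_{X,\Sigma})=0$ is automatic. This part is purely formal.

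The substantive case is $i=1$. Since $h^1(\uOmega^0_X)=0$, the long exact sequence identifies $h^1(\uOmega^0_{X,\Sigma})$ with the cokernel of the natural map $\tOmega^0_X\to \tOmega^0_\Sigma$. For any reduced scheme $Y$ one has $\tOmega^0_Y\cong \OO_{Y^{sn}}$, and this identification is functorial in closed immersions; this is precisely the perspective used in the proof of Proposition \ref{prop-defn-recovers}(3). Thus the task reduces to showing that the induced map $\OO_{X^{sn}}\to \OO_{\Sigma^{sn}}=\OO_\Sigma$ is surjective, where the final identification uses seminormality of $\Sigma$.

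The key input will be the universal property of seminormalization: since $\Sigma$ is seminormal, the closed immersion $\Sigma \hookrightarrow X$ factors uniquely through a closed immersion $\Sigma\hookrightarrow X^{sn}$, equivalently the surjection $\OO_X\twoheadrightarrow \OO_\Sigma$ factors as $\OO_X\to \OO_{X^{sn}}\to \OO_\Sigma$. Surjectivity of the second arrow is then immediate from surjectivity of the composition. The main obstacle, and the only place requiring genuine care, is to confirm that this factorization map genuinely coincides with the one coming from functoriality of the $0$-th Du Bois sheaf applied to $\Sigma\hookrightarrow X$; this should follow from naturality of the isomorphism $\tOmega^0_{(-)}\cong \OO_{(-)^{sn}}$ combined with the universal property above. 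Once that identification is in hand, the cokernel vanishes and the proof concludes.
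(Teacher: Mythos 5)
Your overall strategy is essentially the paper's: take cohomology of the defining triangle, observe that the terms vanish formally for $i\ge 2$, and reduce the $i=1$ case to showing that $\psi\colon \tOmega^0_X\to \tOmega^0_\Sigma$ is surjective. Where you diverge is in how you prove that surjectivity. You produce a map $\scrO_{X^{sn}}\to\scrO_\Sigma$ via the universal property of seminormalization, observe that it must be surjective because the composite $\scrO_X\to\scrO_{X^{sn}}\to\scrO_\Sigma$ is, and then correctly flag---but do not carry out---the check that this map agrees with $\psi$ under the identification $\tOmega^0_{(-)}\cong \scrO_{(-)^{sn}}$. That verification is genuinely fillable (functoriality of seminormalization plus uniqueness of the lift), but the paper sidesteps it entirely. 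It instead uses the commutative square whose vertical arrows are the natural, tautologically functorial maps $\Omega^0_{(-)}\to\tOmega^0_{(-)}$, with $\scrO_X\to\scrO_\Sigma$ on top and $\psi$ on the bottom: the top is surjective since $\Sigma\hookrightarrow X$ is a closed immersion, the right vertical $\phi\colon\scrO_\Sigma\to\tOmega^0_\Sigma$ is an isomorphism by seminormality of $\Sigma$, so $\psi$ is surjective with no need to ever produce a second map and match it to $\psi$. Your argument is correct once the flagged naturality is established, but the paper's square is the shorter route and makes the flagged step unnecessary; I'd recommend rewriting the $i=1$ step in those terms.
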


\begin{proof}
As in \ref{23mDB}, we have the required vanishing except $h^1(\uOmega^p_{X, \Sigma})$. By assumption, $\scrO_\Sigma\isom \scrO_{\Sigma_{\rm sn}} \isom h^0(\uOmega^0_\Sigma)$ where $\Sigma_{\rm sn}$ is the semi-normalization of $\Sigma$. Therefore $\psi:h^0(\uOmega_X^0)\to h^0(\uOmega_\Sigma^0)$ in diagram \ref{23Diagram} is surjective, from which we can conclude that $(X, \Sigma)$ is pre-$0$-Du Bois.
\end{proof}

The following example shows that this fails for the higher Du Bois singularities, even assuming semi-normality.

\begin{example} \label{23-counterexample}
Fix $r, d\ge 1$, and let $V$ be the $d$-uple embedding of $\bbP^r$, and $\Sigma$ the affine cone over $V$, embedded inside $X=\bbA^n$ where $n={r+d \choose d}$. We will show that for $r\ge 2,d\ge 2$, $\Sigma$ is $1$-Du Bois and in addition $\Omega^1_\Sigma\to h^0(\uOmega^1_\Sigma)$ is not surjective. Consequently, by the discussion in  \ref{23counterexample}, we will have a pair $(X, \Sigma)$ that fails to be pre-$1$-Du Bois even though both $X, \Sigma$ are $1$-Du Bois.

Note that $\Sigma$ is normal. By \cite[Corollary 7.3(2)]{SVV23}, for $r\ge 2$, $h^0(\uOmega^1_\Sigma)$ is reflexive since $H^0(\bbP^r, \Omega^1_{\bbP^r})=0$. By \cite[Theorem 3.1]{PS24}, for all $i>0$ we have 
\[
\Gamma(\Sigma, h^i(\uOmega_\Sigma^1))\isom \bigoplus_{m\ge 1} H^i(\bbP^r, \Omega^1_{\bbP^r}\otimes \scrO_{\bbP^r}(md)) \oplus \bigoplus_{m\ge 1} H^i(\bbP^r, \scrO_{\bbP^r}(md))
\]
which vanishes for $d\ge 1$. This shows that $\Sigma$ is weakly $1$-Du Bois for $r\ge2, d\ge 1$. When $r\ge 2$, we have $\codim_\Sigma \Sing \Sigma \ge 3$, so in fact $\Sigma$ is $1$-Du Bois.

On the other hand, for all $d\ge 2$, $\Omega^1_\Sigma$ has nontrivial cotorsion by \cite[Proposition 10]{GR11}. Since $h^0(\uOmega^1_\Sigma)$ is reflexive for $r\ge 2$, we have $h^0(\uOmega^1_\Sigma)\isom \Omega_\Sigma^{[1]}$, so the natural map $\phi:\Omega^1_\Sigma\to h^0(\uOmega^1_\Sigma)$ is not surjective. %However, $h^0(\uOmega^1_\Sigma)$ is always torsion-free (cf. \cite[Lemma 3.5]{Kov25}). 
In particular, as in \ref{23counterexample}, for $r\ge 2$, $d\ge 2$ we have $h^1(\uOmega^1_{X,\Sigma})\ne 0$ and therefore $(X, \Sigma)$ is not pre-1-Du Bois.
\end{example}

\begin{corollary}\label{cor-23-lci}
    Let $(X, \Sigma)$ be a reduced pair. Assume $X$ and $\Sigma$ are lci. If $X$ and $\Sigma$ are $m$-Du Bois, then $(X, \Sigma)$ is $m$-Du Bois.
\end{corollary}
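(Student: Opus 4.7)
The plan is to reduce the statement to showing that $(X, \Sigma)$ is strict-$m$-Du Bois by applying Proposition \ref{23strict} to $X$ and $\Sigma$ individually, and then verify the remaining conditions in the $m$-Du Bois pair definition using the lci hypothesis and the individual codimension bounds.

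First, I would dispose of the $m = 0$ case separately: there $0$-Du Bois pair agrees with the classical Du Bois pair notion by Proposition \ref{prop-defn-recovers}(4), so the conclusion follows directly from Proposition \ref{230DB}. Henceforth assume $m \ge 1$. The individual $m$-Du Bois hypotheses give $\codim_X \Sing X \ge 2m+1 \ge 3$ and $\codim_\Sigma \Sing \Sigma \ge 2m+1 \ge 3$. Combined with the lci (hence $S_2$) hypothesis, Serre's criterion forces both $X$ and $\Sigma$ to be normal. By \cite[Corollary 3.1]{MV19} applied separately to $X$ and $\Sigma$, $\Omega^p_X$ and $\Omega^p_\Sigma$ are reflexive for $p \le m$; in particular $\Omega^p_\Sigma$ is torsion-free so $\Omega^p_\Sigma/\Tors(\Omega^p_\Sigma) = \Omega^p_\Sigma$.

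Next, I would upgrade $X$ and $\Sigma$ individually to strict-$m$-Du Bois. Since $\Omega^p_X$ and $\tOmega^p_X = h^0(\uOmega^p_X)$ are both reflexive on the normal $X$ and agree on its smooth locus, they coincide; combined with the pre-$m$-Du Bois vanishing $h^i(\uOmega^p_X) = 0$ for $i \ge 1$, this yields $\Omega^p_X \qis \uOmega^p_X$ for $p \le m$. The same argument applied to $\Sigma$ gives $\Omega^p_\Sigma \qis \uOmega^p_\Sigma$. Applying Proposition \ref{23strict}, deducing (1) from (2) and (3), the pair $(X, \Sigma)$ is strict-$m$-Du Bois. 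In particular $\uOmega^p_{X, \Sigma} \qis \Omega^p_{X, \Sigma}$, which yields pre-$m$-Du Bois for the pair and the identification $\tOmega^p_{X, \Sigma} = \Omega^p_{X, \Sigma}$.

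Finally, I would verify the remaining requirements. Seminormality of $X$ is automatic from $X$ being Du Bois. The codimension bound $\codim_X \Sing(X, \Sigma) \ge 2m+1$ follows from $\codim_X \Sing X \ge 2m+1$ and $\codim_X \Sing \Sigma \ge 2m+2$ (the latter since $\Sigma$ is a divisor in $X$). For reflexivity of $\tOmega^p_{X, \Sigma}$, I would apply Lemma \ref{S2-seq-trick} to the sequence \eqref{pair-omega}: $\Omega^p_X$ is reflexive (hence $S_2$ and torsion-free) and $\Omega^p_\Sigma/\Tors(\Omega^p_\Sigma)$ is torsion-free on $\Sigma$, so $\Omega^p_{X, \Sigma}$ is $S_2$ and torsion-free on $X$; since $X$ is normal, this is equivalent to reflexive. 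The main subtlety is the reflexivity of $\Omega^p_{X, \Sigma}$, which requires $X$ to be normal; this is precisely where the bound $2m+1 \ge 3$ is used, which is why the $m = 0$ case must be handled separately via Proposition \ref{230DB}.
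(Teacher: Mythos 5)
Your proof is correct and takes essentially the same route as the paper: both pass through the strict-$m$-Du Bois notion for $X$ and $\Sigma$, invoke Proposition~\ref{23strict} to get strictness (hence pre-$m$-Du Bois) for the pair, and finish with Lemma~\ref{S2-seq-trick} for the $S_2$/reflexivity of $\tOmega^p_{X,\Sigma}$. The only difference is that where the paper cites \cite[Corollary 5.6]{SVV23} as a black box for ``lci $m$-Du Bois $\Rightarrow$ strict-$m$-Du Bois,'' you re-derive it in-line via \cite{MV19} and the reflexive-hull comparison (which is why you need to treat $m=0$ separately); this is more detail rather than a genuinely different argument.
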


\begin{proof}
    For lci varieties, $m$-Du Bois and strict-$m$-Du Bois agree \cite[Corollary 5.6]{SVV23}, so $(X,\Sigma)$ is pre-$m$-Du Bois by \ref{23strict}. The codimension condition is immediate since it is satisfied for both $X$ and $\Sigma$. So we just need to check that $\ot$ is $S_2$ for $p \le m$. This follows from Lemma \ref{S2-seq-trick}.
\end{proof}

In fact, by Proposition \ref{prop-lci-recover} and \cite[5.6]{SVV23}, under the assumptions of Corollary \ref{cor-23-lci} $m$-Du Bois implies strict-$m$-Du Bois for either $(X, \Sigma)$, $X$ or $\Sigma$. Therefore Proposition \ref{23strict} also applies here.

%\newpage

\section{Hyperplane Sections and Cyclic Covers}

In the first part of this section we gather several lemmas on the behavior of the Du Bois complex of a pair under general hyperplane sections and cyclic covers. This builds toward a surjectivity statement \ref{prop_surjecitivty_cover}, which is a key ingredient in the proof of the injectivity theorem. 

The non-pair version of several statements can be found in \cite[\S 5, \S 6]{Kov25}. We will highlight the difference in the pair setting in the proof.

\begin{blank}[Notation for hyperplane sections and cyclic covers]\label{notation-cyclic-cover}

Let $(X,\Sigma)$ be a reduced pair, $\scrL$ a semi-ample line bundle on $X$. First denote $\iota:\Sigma\to X$ the closed immersion, $V=X\setminus \Sigma$, and $\scrL_\Sigma =\iota^*\scrL$. Now let $s\in H^0(X,\scrL^n)$ a general section for some $n\gg 0$, such that its image $s_\Sigma\in H^0(\Sigma,\scrL_\Sigma^n)$ is a non-zero (general) section on each component of $\Sigma$. Denote $H=(s=0)$ the zero locus of $s$ and form
\[
\eta: Y=\Spec \bigoplus_{i=0}^{n-1} \scrL^{-i} \to X,
\]
the cyclic cover corresponding to the section $s$. Denote $H'=(\eta^* H)_{\red}$, $\Sigma' = \eta^{-1}\Sigma$, and $V'=\eta^{-1}V=Y\setminus \Sigma'$. Our setup gives $H\cap \Sigma = (s_\Sigma=0)$ is a general hyperplane in $\Sigma$, for which we will denote by both $\Sigma_H$ and $H_\Sigma$ for convenience. Further, we have
\[
\eta_\Sigma:=\eta|_{\Sigma'}:\Sigma'=\Spec \bigoplus_{i=0}^{n-1} \scrL_\Sigma^{-i} \to \Sigma
\]
is the cyclic cover corresponding to the section $s_\Sigma$. Lastly, we will always let $\pi_\kdot:X_\kdot\to X$ be a good hyperresolution for the reduced pair $(X,\Sigma)$. That is, using the notation $V_\kdot = X_\kdot \times_X V$ and $\Sigma_\kdot = X_\kdot \setminus V_\kdot$, we have either $\Sigma_\alpha$ is a snc divisor in $X_\alpha$ or $\Sigma_\alpha=X_\alpha$ (in which case we will view $\Sigma$ as the trivial divisor) for all $\alpha$. We may take $H$ general such that $H_\kdot = X_\kdot \times_X H\to H$ is a hyperresolution, and $H_\alpha + \Sigma_\alpha$ is an snc divisor in $X_\alpha$ whenever $\Sigma_\alpha\ne X_\alpha$.
\end{blank}

\begin{lemma}\label{lem-pair-hyperplane-triangles}
Using notation \ref{notation-cyclic-cover}, we have the following exact triangles for all $p$
\begin{align}
    &\uOmega^p_{X, \Sigma}\to \uOmega^p_{X,\Sigma}(\log H)\to \uOmega^{p-1}_{H,\Sigma_H} \xrightarrow{+1} \label{lem-pair-hyperplane-omega} \\
    &\uf_p^{X, \Sigma}\to \uf_p^{X,\Sigma}(\log H)\to \uf_{p-1}^{H,\Sigma_H}[-1] \xrightarrow{+1}. \label{lem-pair-hyperplane-uf}
\end{align}
Recall that $\uf_p^{-, -}$ and $\tf_p^{-, -}$ are the cofiltration complex associated to $\uOmega^\kdot_{-, -}$ and $\tOmega^\kdot_{-, -}$ respectively, see \ref{notation-pair}.
\end{lemma}

\begin{proof}
This is the logarithmic version of \cite[Lemma 5.7]{Kov25} involving our newly defined Du Bois complex associated to a triple. We will do this also through a streamlined hyperresolution argument. Let $\pi_\kdot: X_\kdot \to X$ be as in \ref{notation-cyclic-cover}. On each component we have short exact sequences by \cite[2.3]{EV92} twisted with $-\Sigma_\alpha$
\[
0\to \Omega^p_{X_\alpha} (\log \Sigma_\alpha)(-\Sigma_\alpha) \to \Omega^p_{X_\alpha}(\log \Sigma_\alpha + H_\alpha)(-\Sigma_\alpha)\to \Omega^{p-1}_{H_\alpha}(\log \Sigma_\alpha\cap H_\alpha)(-\Sigma_\alpha \cap H_\alpha) \to 0
\]
Note the use of the projection formula for the third term. Again with the understanding of $\Sigma_\alpha$ being the trivial divisor when $\Sigma_\alpha=X_\alpha$, this is by definition
\[
0\to \Omega^p_{X_\alpha, \Sigma_\alpha}\to \Omega^p_{X_\alpha,\Sigma_\alpha}(\log H_\alpha)\to \Omega^{p-1}_{H_\alpha,(\Sigma_H)_\alpha} \to 0.
\]
Applying $(R\pi_\kdot)_*$ and using Lemma \ref{lem-pair-log-poles-hyperres} we obtain the desired triangle \ref{lem-pair-hyperplane-omega}
\[
\uOmega^p_{X, \Sigma}\to \uOmega^p_{X,\Sigma}(\log H)\to \uOmega^{p-1}_{H,\Sigma_H} \xrightarrow{+1}.
\]

The same argument as in \cite[Lemma 5.8]{Kov25} gives the statement \ref{lem-pair-hyperplane-uf} on cofiltrations from \ref{lem-pair-hyperplane-omega} by applying the 9-lemma on a hyperresolution.
\end{proof}

\begin{proposition}\label{prop_pair_cover}
Using notation \ref{notation-cyclic-cover}, we have for all $p$
\begin{align}
\eta_* \uOmega^p_{Y, \Sigma'}\qis& \uOmega^p_{X,\Sigma} \oplus\left( \bigoplus_{i=1}^{n-1}\uOmega^p_{X,\Sigma}(\log H)\otimes \scrL^{-i} \right) \label{prop_pair_cover_uOmega},\\
\eta_* \tOmega^p_{Y, \Sigma'}\qis& \tOmega^p_{X,\Sigma} \oplus\left( \bigoplus_{i=1}^{n-1}\tOmega^p_{X,\Sigma}(\log H)\otimes \scrL^{-i} \right) \label{prop_pair_cover_tOmega},\\
\eta_* \uf_p^{Y, \Sigma'}\qis& \uf_p^{X,\Sigma} \oplus\left( \bigoplus_{i=1}^{n-1}\uf_p^{X,\Sigma}(\log H)\otimes \scrL^{-i} \right) \label{prop_pair_cover_uf},\\
\eta_* \tf_p^{Y, \Sigma'}\qis& \tf_p^{X,\Sigma} \oplus\left( \bigoplus_{i=1}^{n-1}\tf_p^{X,\Sigma}(\log H)\otimes \scrL^{-i} \right) \label{prop_pair_cover_tf},
\end{align}
where $\uf_p^{X,\Sigma}(\log H)\otimes \scrL^{-i}$ and $\tf_p^{X,\Sigma}(\log H)\otimes \scrL^{-i}$ are considered as complexes of connections compatible with the hyperfiltration (cf. \cite[\S 2.E]{Kov25}).
\end{proposition}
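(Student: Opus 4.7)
The plan is to reduce to the classical logarithmic cyclic-cover splitting on each piece of a good hyperresolution, as in \cite[Section 6]{Kov25}, with the pair-specific $-\Sigma$ twist carried along via the projection formula. Use the hyperresolution $\pi_\kdot: X_\kdot \to X$ from \ref{notation-cyclic-cover}, chosen compatibly so that on each $X_\alpha$ either $\Sigma_\alpha+H_\alpha$ is snc in a smooth $X_\alpha$, or $\Sigma_\alpha=X_\alpha$ (treated as the trivial divisor).

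For (\ref{prop_pair_cover_uOmega}), form the base change $\eta_\kdot: Y_\kdot \to X_\kdot$ and refine it if necessary to a good hyperresolution of $(Y,\Sigma')$ compatible with $H'$; since $\eta$ is finite, Lemma \ref{lem-pair-log-poles-hyperres} still applies to the refined cubical scheme and computes both $\uOmega^\kdot_{Y,\Sigma'}$ and its log-$H'$ variant. On a smooth piece with $\Sigma_\alpha + H_\alpha$ snc, the classical Esnault--Viehweg eigenspace decomposition for $\eta_{\alpha *}\Omega^p_{Y_\alpha}(\log \Sigma'_\alpha)$ (where the trivial eigenspace picks up only the unramified log poles along $\Sigma_\alpha$, and each nontrivial eigenspace acquires an extra log pole along the branch divisor $H_\alpha$), combined with the projection formula applied to $\eta_\alpha^* \scrO(-\Sigma_\alpha) = \scrO(-\Sigma'_\alpha)$, gives
\[
\eta_{\alpha *}\,\Omega^p_{Y_\alpha,\Sigma'_\alpha}
\isom \Omega^p_{X_\alpha,\Sigma_\alpha} \oplus
\bigoplus_{i=1}^{n-1} \Omega^p_{X_\alpha,\Sigma_\alpha}(\log H_\alpha)\otimes \scrL^{-i};
\]
pieces where $\Sigma_\alpha = X_\alpha$ contribute zero on both sides by convention. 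Applying $R(\pi_\kdot)_*$ and using that $\eta$ is finite, so $\eta_* = R\eta_*$ commutes with the hyperresolution pushforward, yields (\ref{prop_pair_cover_uOmega}).

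Statement (\ref{prop_pair_cover_tOmega}) follows from (\ref{prop_pair_cover_uOmega}) by taking $h^0$, since $\eta_*$ is exact on coherent sheaves and tensoring with an invertible sheaf is exact. For (\ref{prop_pair_cover_uf}) and (\ref{prop_pair_cover_tf}), the decompositions in (\ref{prop_pair_cover_uOmega}) and (\ref{prop_pair_cover_tOmega}) are natural in the internal degree $p$, as they come from a global eigenspace decomposition compatible with the Hodge filtration on each piece of the hyperresolution. Passing to the cofiltrations defined as $\Cone(F^{p+1}\uOmega^\kdot_{X,\Sigma}\to \uOmega^\kdot_{X,\Sigma})$, and the analogous cone for the filtration b\^ete on $\tOmega^\kdot_{X,\Sigma}$, preserves the splittings, yielding (\ref{prop_pair_cover_uf}) and (\ref{prop_pair_cover_tf}).

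The main obstacle I anticipate is the hyperresolution bookkeeping: the naive base change $Y_\kdot$ is not automatically a good hyperresolution of $(Y,\Sigma')$, so one must justify that a suitable refinement exists and that it simultaneously computes $\uOmega^\kdot_{Y,\Sigma'}$, $\tOmega^\kdot_{Y,\Sigma'}$, and their logarithmic variants in a way compatible with the piecewise eigenspace decomposition. Once this setup is in place, the pair-specific content collapses to the projection-formula identity displayed above, and the rest of the argument proceeds in parallel with the non-pair case of \cite[Section 6]{Kov25}.
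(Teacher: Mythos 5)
Your proof takes a genuinely different route from the paper, and the route you chose reopens exactly the technical difficulty you flag at the end. The paper's proof is much more economical: it does not touch hyperresolutions at all. Instead it applies the non-pair splitting \cite[Theorem 6.2(iv)]{Kov25} \emph{twice}, once to the cyclic cover $\eta: Y\to X$ and once to the induced cyclic cover $\eta_\Sigma: \Sigma'\to\Sigma$, obtaining natural, compatible eigensheaf decompositions of $\eta_*\uOmega^p_Y$ and $\eta_*\uOmega^p_{\Sigma'}$. Then, since $\eta$ is finite and hence $R\eta_*=\eta_*$ is exact, pushing forward the defining triangle for $\uOmega^p_{Y,\Sigma'}$ (Definition \ref{def_pair_log_poles}) and matching summands immediately gives \eqref{prop_pair_cover_uOmega}; the projection formula appears only to identify the twisted summands supported on $\Sigma$. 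All the hyperresolution work is thus delegated to the already-proven non-pair theorem. Your (\ref{prop_pair_cover_tOmega}) and cofiltration steps are essentially the same as the paper's.

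As written, your argument has a genuine gap, and it is exactly the one you anticipate. You form $Y_\kdot = X_\kdot\times_X Y$, observe it may fail to be a good hyperresolution of $(Y,\Sigma')$, and propose to ``refine if necessary.'' But any refinement replaces the cubical scheme over $Y$ with one whose pieces no longer admit finite maps to the pieces $X_\alpha$, so the piecewise Esnault--Viehweg eigenspace decomposition
\[
\eta_{\alpha*}\,\Omega^p_{Y_\alpha,\Sigma'_\alpha}\isom \Omega^p_{X_\alpha,\Sigma_\alpha}\oplus\bigoplus_{i=1}^{n-1}\Omega^p_{X_\alpha,\Sigma_\alpha}(\log H_\alpha)\otimes\scrL_\alpha^{-i}
\]
no longer assembles under $R(\pi_\kdot)_*$ as stated; you would have to redo the comparison of the two cubical schemes (the refinement of $Y_\kdot$ and $X_\kdot$), which is nontrivial and is precisely the content of the non-pair theorem you are trying not to cite. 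There is a secondary issue as well: even on smooth pieces, $\pi_\alpha^* H$ need not be reduced or need not be a divisor on lower-dimensional strata of the hyperresolution, so $Y_\alpha$ need not be smooth, and this is not remedied by taking $H$ general when $\pi_\alpha$ is not generically an isomorphism. I would recommend abandoning the from-scratch hyperresolution computation and instead following the paper's reduction: invoke \cite[Theorem 6.2(iv)]{Kov25} for both $Y\to X$ and $\Sigma'\to\Sigma$, then take cones.
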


\begin{proof}
%Note that since $s\in H^0(X, \scrL^n)$ is a general section of a base-point free line bundle, we may choose it so that $H\cap\Sigma$ is a hyperplane (i.e. is the zero locus of the non-zero image of $s\in H^0(\Sigma, \iota^*\scrL^n)$) and further $\eta|_{\Sigma'}:\Sigma'\to \Sigma$ is a cyclic cover of $\Sigma$. For convenience, we denote $H_\Sigma=\Sigma_H:=H\cap \Sigma$ and $\scrL_\Sigma=\iota^*\scrL$. 
We apply \cite[Theorem 6.2(iv)]{Kov25} to both covers $Y\to X$ and $\Sigma'\to \Sigma$ to obtain natural decompositions compatible with the morphism $\eta_* \uOmega^p_{Y} \to \eta_* \uOmega^p_{\Sigma'}$:
\begin{align*}
\eta_*\uOmega^p_Y &\qis \uOmega^p_X \oplus\left( \bigoplus_{i=1}^{n-1}\uOmega^p_{X}(\log H)\otimes \scrL^{-i} \right), \\
\eta_*\uOmega^p_{\Sigma'} &\qis \uOmega^p_\Sigma \oplus\left( \bigoplus_{i=1}^{n-1}\uOmega^p_{\Sigma}(\log H_\Sigma)\otimes \scrL_\Sigma^{-i} \right).
\end{align*}
As $\eta$ is finite, $R\eta_*=\eta_*$ is exact. So we have the following exact triangle on $X$
\[
\eta_* \uOmega^p_{Y, \Sigma'} \to \eta_* \uOmega^p_{Y} \to \eta_* \uOmega^p_{\Sigma'} \xrightarrow{+1}
\]
Examining each summand and using Definition \ref{def_pair_log_poles}, we obtain the quasi-isomorphism \ref{prop_pair_cover_uOmega}
\[
\eta_* \uOmega^p_{Y, \Sigma'}\qis \uOmega^p_{X,\Sigma} \oplus\left( \bigoplus_{i=1}^{n-1}\uOmega^p_{X,\Sigma}(\log H)\otimes \scrL^{-i} \right).
\]
Note the implicit use of the derived projection formula above due to omitted $\iota_*$ for complex supported on $\Sigma$. Now take the $h^0$ cohomology of \ref{prop_pair_cover_uOmega}, which commutes with exact functors $\eta_*$ and $-\otimes \scrL^j$. This gives \ref{prop_pair_cover_tOmega}.

Now consider the triangle for the hyperfiltered complexes
\[\begin{tikzcd}
% \eta_* \tOmega^\kdot_{Y, \Sigma'} \ar[r] \ar[d] & 
% \eta_* \tOmega^\kdot_{Y} \ar[r] \ar[d] & 
% \eta_* \tOmega^\kdot_{\Sigma'} \ar[r, "+1"] \ar[d] & \, \\
\eta_* \uOmega^\kdot_{Y, \Sigma'} \ar[r] & 
\eta_* \uOmega^\kdot_{Y} \ar[r] & 
\eta_* \uOmega^\kdot_{\Sigma'} \ar[r, "+1"] & \,
\end{tikzcd}\]
Using \ref{prop_pair_cover_uOmega} and \cite[Lemma 2.28]{Kov25}, for each $0< i< n$, there exists compatible hyperfiltered complexes $\uOmega^\kdot_{X,\Sigma}(\log H)\otimes \scrL^{-i}$ with connection. Taking $h^0$ and applying \ref{prop_pair_cover_tOmega}, we obtain compatible hyperfiltered complexes and a filtered morphism
\[
\tOmega^\kdot_{X,\Sigma}(\log H)\otimes \scrL^{-i} \to \uOmega^\kdot_{X,\Sigma}(\log H)\otimes \scrL^{-i}.
\]
This again uses the fact that $h^0$ commutes with exact $\eta_*$. Consequently, for each $p$, the cofiltration complexes also admits compatible connection
\[
\tf_p^{X,\Sigma}(\log H)\otimes \scrL^{-i} \to
\uf_p^{X,\Sigma}(\log H)\otimes \scrL^{-i}.
\]

Now the existence is established, the same argument as in \cite[Corollary 6.6]{Kov25} gives the statement on cofiltrations, where \ref{prop_pair_cover_uf} follows from \ref{prop_pair_cover_uOmega} and \ref{prop_pair_cover_tf} follows from \ref{prop_pair_cover_tOmega}.
\end{proof}

\begin{remark}
Note well that the notation $\uf_p^{X,\Sigma}(\log H)\otimes \scrL^{-i}$ and $\tf_p^{X,\Sigma}(\log H)\otimes \scrL^{-i}$ are not considered as tensor products, but simply as complexes of connections with respect to $\scrL$, i.e. the associated graded complexes are $\uOmega_p^{X,\Sigma}(\log H)\otimes \scrL^{-i}$ and $\tOmega_p^{X,\Sigma}(\log H)\otimes \scrL^{-i}$ respectively. Their existence ultimately comes only from the cyclic cover, cf. \cite[3.16]{EV92}.
\end{remark}

\begin{proposition}\label{prop_surjecitivty_cover}
Suppose in addition to \ref{notation-cyclic-cover} that $X$ is a proper, connected variety. Then for each $q$ and $0<i<n$, the natural morphism
\[
\bbH^q(X,\tf_p^{X,\Sigma}(\log H)\otimes \scrL^{-i})\twoheadrightarrow \bbH^q(X,\uf_p^{X,\Sigma}(\log H)\otimes \scrL^{-i})
\]
is surjective.
\end{proposition}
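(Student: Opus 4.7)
The plan is to realize each summand appearing in the statement as a $\bbZ/n$-isotypic component on the cyclic cover $Y$, and then deduce the surjectivity from a single global Hodge-theoretic surjection on the proper variety $Y$.

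To set this up, I would first note that since $\eta:Y\to X$ is finite and $X$ is proper, $Y$ is proper and $R\eta_*=\eta_*$. Combining the Leray isomorphism with Proposition \ref{prop_pair_cover} (specifically \ref{prop_pair_cover_uf} and \ref{prop_pair_cover_tf}) yields natural identifications
\begin{align*}
\bbH^q(Y,\tf_p^{Y,\Sigma'}) &\cong \bbH^q(X,\tf_p^{X,\Sigma}) \oplus \bigoplus_{i=1}^{n-1} \bbH^q\bigl(X,\tf_p^{X,\Sigma}(\log H)\otimes\scrL^{-i}\bigr), \\
\bbH^q(Y,\uf_p^{Y,\Sigma'}) &\cong \bbH^q(X,\uf_p^{X,\Sigma}) \oplus \bigoplus_{i=1}^{n-1} \bbH^q\bigl(X,\uf_p^{X,\Sigma}(\log H)\otimes\scrL^{-i}\bigr).
\end{align*}
These are precisely the eigenspace decompositions for the natural $\bbZ/n$-action induced by the cyclic covering. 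The comparison map $\tOmega^\kdot_{Y,\Sigma'}\to\uOmega^\kdot_{Y,\Sigma'}$ is $\bbZ/n$-equivariant, so the induced map $\tf_p^{Y,\Sigma'}\to\uf_p^{Y,\Sigma'}$ respects the decomposition, sending the $i$-th summand on the left to the $i$-th summand on the right.

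The heart of the argument would then be the global surjectivity
\[
\bbH^q(Y,\tf_p^{Y,\Sigma'}) \twoheadrightarrow \bbH^q(Y,\uf_p^{Y,\Sigma'})
\]
on the proper pair $(Y,\Sigma')$, a direct pair-analogue of \cite[Proposition 6.5]{Kov25}. This is a Hodge-theoretic consequence of the $E_1$-degeneration of the Hodge-to-de Rham spectral sequence for the Du Bois complex of a proper pair---equivalently, the strictness of the Hodge filtration---established via cubical hyperresolutions of $(Y,\Sigma')$, cf.\ \cite[\S 6]{Kol13}. Since this global surjection is $\bbZ/n$-equivariant and the decompositions above are into distinct eigenspaces, its restriction to each eigenspace is individually surjective. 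Extracting the $i$-th eigenspace for each $0<i<n$ yields exactly the claimed surjection.

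The main obstacle is justifying the global surjectivity on $Y$. Once this pair-version of the Deligne--Du Bois degeneration is in hand, the remainder is formal eigenspace bookkeeping using Proposition \ref{prop_pair_cover}. Tracking the compatibility of the Hodge filtration strictness with the finite pushforward $\eta_*$, so that the eigenspace decomposition indeed comes from the global action on $Y$, is the only substantive technical point.
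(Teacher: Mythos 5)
Your proposal matches the paper's proof: transfer the problem to the proper cover $(Y,\Sigma')$ via the finite map $\eta$, invoke the known global Hodge-theoretic surjection $\bbH^q(Y,\tf_p^{Y,\Sigma'})\twoheadrightarrow \bbH^q(Y,\uf_p^{Y,\Sigma'})$, and read off the claimed surjection on each summand of the decomposition from Proposition~\ref{prop_pair_cover}. The ``main obstacle'' you flag---the pair version of the global surjectivity on $(Y,\Sigma')$---is resolved in the paper by a direct citation of \cite[Corollary 7.5]{Kov25} applied to the pair $(Y,\Sigma')$, so that ingredient is already available and you do not need to rerun a degeneration argument; the eigenspace bookkeeping you describe is exactly the remaining step.
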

\begin{proof}
By \cite[Corollary 7.5]{Kov25} applied to $(Y, \Sigma')$ we have a surjection
% \[\begin{tikzcd}[row sep=small]
% \bbH^q(Y,\tf_p^{Y,\Sigma'})\ar[r, twoheadrightarrow] \ar[d, "\isom"] & \bbH^q(Y,\uf_p^{Y,\Sigma'}) \ar[d, "\isom"]\\
% \bbH^q(X,\eta_*\tf_p^{Y,\Sigma'})  & \bbH^q(X,\eta_*\uf_p^{Y,\Sigma'})
% \end{tikzcd}\]
\[
\bbH^q(X,\eta_*\tf_p^{Y,\Sigma'})\isom \bbH^q(Y,\tf_p^{Y,\Sigma'})\twoheadrightarrow \bbH^q(Y,\uf_p^{Y,\Sigma'}) \isom \bbH^q(X,\eta_*\uf_p^{Y,\Sigma'}).
\]
We now simply apply the decompositions \ref{prop_pair_cover_uf} and \ref{prop_pair_cover_tf} to get the surjection on each summand.
\end{proof}

\subsection{Bertini Theorems}

We first prove a conormal sequence for the Du Bois complex of pairs. For simplicity, we suppressed the left derived tensor symbol and denote, for exaxmple, $\uOmega_H^p(-H) =\uOmega_H^p\otimes_{\scrO_H}^L \scrO_H(-H)$ and $\uOmega_X^p|_H = \uOmega_X^p\otimes_{\scrO_X}^L \scrO_H$.

\begin{lemma}\label{lem-conormal}
Let $H\subset X$ be a general hyperplane. Then there is an exact triangle
\[
\uOmega^{p-1}_{H,\Sigma\cap H}(-H) \to \uOmega^p_{X, \Sigma}|_H \to \uOmega^p_{H,\Sigma\cap H} \xrightarrow{+1}
\]
\end{lemma}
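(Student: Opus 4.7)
\textbf{Proof plan for Lemma \ref{lem-conormal}.}
The plan is to reduce the assertion to the snc case via a good hyperresolution of the pair $(X, \Sigma)$, where the log conormal sequence for differential forms is classical, and then push everything forward.

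First, fix a good hyperresolution $\pi_\kdot: X_\kdot \to X$ of the pair $(X, \Sigma)$, as in \ref{notation-cyclic-cover}. Since $H \subset X$ is general, we may further arrange that $H_\kdot := X_\kdot \times_X H \to H$ is a good hyperresolution of $(H, \Sigma \cap H)$, that $\Sigma_\alpha + H_\alpha$ is snc on each smooth $X_\alpha$, and that $H_\alpha$ meets $\Sigma_\alpha$ transversally (in particular $\scrO_{X_\alpha}(-\Sigma_\alpha)|_{H_\alpha} \isom \scrO_{H_\alpha}(-\Sigma_\alpha \cap H_\alpha)$).

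On each component $X_\alpha$, the classical log conormal short exact sequence (cf.\ \cite[2.3]{EV92}) reads
\[
0 \to \Omega^{p-1}_{H_\alpha}(\log \Sigma_\alpha \cap H_\alpha)(-H_\alpha) \to \Omega^p_{X_\alpha}(\log \Sigma_\alpha)|_{H_\alpha} \to \Omega^p_{H_\alpha}(\log \Sigma_\alpha \cap H_\alpha) \to 0.
\]
Twisting by $\scrO_{X_\alpha}(-\Sigma_\alpha)$ and using transversality to identify $\scrO_{X_\alpha}(-\Sigma_\alpha)|_{H_\alpha}$ as above, this becomes
\[
0 \to \Omega^{p-1}_{H_\alpha, \Sigma_\alpha \cap H_\alpha}(-H_\alpha) \to \Omega^p_{X_\alpha, \Sigma_\alpha}|_{H_\alpha} \to \Omega^p_{H_\alpha, \Sigma_\alpha \cap H_\alpha} \to 0,
\]
where the edge cases $\Sigma_\alpha = X_\alpha$ are handled by the convention of Lemma \ref{lem-pair-log-poles-hyperres} (in which each term is interpreted as zero).

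Next, I would apply the cubical pushforward $R(\pi_\kdot)_*$. Since each $\Omega^p_{X_\alpha, \Sigma_\alpha}$ is locally free, its derived and underived restriction to the Cartier divisor $H_\alpha$ agree, so no higher $\Tor$ terms appear. By Lemma \ref{lem-pair-log-poles-hyperres} (applied on $H$) together with the derived projection formula along $H_\kdot \hookrightarrow X_\kdot$, one gets
\[
R(\pi_\kdot)_*(\Omega^p_{X_\kdot, \Sigma_\kdot}|_{H_\kdot}) \qis \uOmega^p_{X,\Sigma}|_H, \qquad R(\pi_\kdot)_*\Omega^p_{H_\kdot, \Sigma_\kdot \cap H_\kdot} \qis \uOmega^p_{H, \Sigma \cap H},
\]
and analogously for the twist by $\scrO(-H)$. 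Assembling these three quasi-isomorphisms produces the desired exact triangle.

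The step I expect to be the main obstacle is the bookkeeping needed to justify that the restriction-to-$H$ operation commutes with the cubical pushforward in the derived sense, i.e.\ that the derived projection formula genuinely identifies $R(\pi_\kdot)_*\Omega^p_{X_\kdot, \Sigma_\kdot} \otimes^L_{\scrO_X} \scrO_H$ with $R(\pi_\kdot)_*(\Omega^p_{X_\kdot, \Sigma_\kdot} \otimes^L_{\scrO_{X_\kdot}} \scrO_{H_\kdot})$ uniformly on the cubical diagram, together with the uniform handling of the edge components where $\Sigma_\alpha = X_\alpha$. Once these compatibilities are in place, the triangle follows formally from the component-wise exact sequences above.
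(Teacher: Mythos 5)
Your proposal is correct but takes a genuinely different route from the paper. The paper's proof does \emph{not} return to hyperresolutions at all: it builds the $3\times 3$ diagram whose rows are the defining triangles for $\uOmega^p_{-,\Sigma}$ (restricted to $H$ in the middle row), observes that the second and third columns are the already-established non-pair conormal triangles $\uOmega^{p-1}_H(-H)\to\uOmega^p_X|_H\to\uOmega^p_H$ and its analogue for $\Sigma$ from \cite[Lemma 3.2]{SVV23}, and then applies the derived $9$-lemma \cite[B.1]{Kov13} to deduce exactness of the first column. You instead re-derive the conormal triangle from scratch: good hyperresolution, componentwise log conormal short exact sequence on each smooth $X_\alpha$, twist by $-\Sigma_\alpha$, and push forward, using the derived projection formula to commute $R(\pi_\kdot)_*$ with $\otimes^L\scrO_H$. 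Both arguments are sound. The paper's approach is shorter, pushes the hyperresolution bookkeeping into the cited \cite[Lemma 3.2]{SVV23}, and sidesteps the compatibility issues you flag (e.g.\ ensuring $\pi_\alpha^*H=H_\alpha$ is reduced, smooth, snc with $\Sigma_\alpha$, not containing any $X_\alpha$, and that derived pullback of $\scrO_H$ is underived on each component), which you would need to establish uniformly across the cubical diagram. Your approach, on the other hand, is self-contained and matches the style of Lemma~\ref{lem-pair-hyperplane-triangles} in the paper, so it is a perfectly reasonable alternative. One small caveat: the short exact sequence you quote from \cite[2.3]{EV92} is the \emph{restriction} (not residue) log conormal sequence; that reference gives the residue and ``twist-down'' sequences, from which the sequence you write follows by a short diagram chase, so the citation is slightly imprecise but the statement is standard.
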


\begin{proof}
First, we take $H$ so that $\Sigma \cap H$ is also a general hyperplane section. Consider the following diagram
\begin{equation*}
\begin{tikzcd}[cramped, row sep=scriptsize]
\uOmega^{p-1}_{H,\Sigma\cap H}(-H) \ar[r] \ar[d] & 
\uOmega^{p-1}_H(-H) \ar[r] \ar[d] & 
\uOmega^{p-1}_{\Sigma\cap H}(-\Sigma\cap H) \ar[r,"+1"] \ar[d] & {} \\
\uOmega^{p}_{X,\Sigma}|_H \ar[r] \ar[d] & 
\uOmega^{p}_{X}|_H \ar[r] \ar[d] & 
\uOmega^{p}_{\Sigma}|_{\Sigma\cap H} \ar[r,"+1"] \ar[d] & {} \\
\uOmega^{p}_{H,\Sigma\cap H} \ar[r] \ar[d] & 
\uOmega^{p}_H \ar[r] \ar[d,"+1"] & 
\uOmega^{p-1}_{\Sigma\cap H} \ar[r,"+1"] \ar[d,"+1"] & {} \\
{} & {} & {} & , \\
\end{tikzcd}\end{equation*}
where all pullback and twists are left-derived and we omitted the exact pushforward on the right most column. We note in particular that $\uOmega^{p-1}_{\Sigma\cap H}(-\Sigma\cap H) \qis \uOmega^{p-1}_{\Sigma_\cap H}(-H)$ by the derived projection formula, where the former is twisted before pushforward and the latter after.

Now, each row is exact and the last two columns are exact by \cite[Lemma 3.2]{SVV23}. The first column is then exact by a version of the derived $9$-lemma, see \cite[B.1]{Kov13}.
\end{proof}

\begin{theorem}\label{thm-hyperplane}
Let $H\subset X$ be a general hyperplane section. If $(X,\Sigma)$ has pre-$m$-Du Bois (resp. pre-$m$-rational) singularities, then $(H, \Sigma\cap H)$ also has pre-$m$-Du Bois (resp. pre-$m$-rational) singularities.
\end{theorem}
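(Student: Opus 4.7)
The plan is to prove both parts by induction on $p$, using an appropriate conormal-type exact triangle in each case. The Du Bois case follows directly from Lemma \ref{lem-conormal}; the rational case requires first establishing a log analog by the same hyperresolution argument and then dualizing.

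The core observation I will use throughout is that for any $F\in D(X)$, the exact triangle $F(-H)\to F\to F|_H \xrightarrow{+1}$ yields the long exact sequence
\[
\cdots\to h^i(F)(-H)\to h^i(F)\to h^i(F|_H)\to h^{i+1}(F)(-H)\to \cdots,
\]
so $h^i(F)=0$ for all $i\ge 1$ implies $h^i(F|_H)=0$ for all $i\ge 1$. Applied to $F=\uOmega^p_{X,\Sigma}$ with $p\le m$, the pre-$m$-Du Bois hypothesis immediately gives $h^i(\uOmega^p_{X,\Sigma}|_H)=0$ for $i\ge 1$. I then induct on $p$. The base case $p=0$ holds because Lemma \ref{lem-conormal} degenerates to $\uOmega^0_{X,\Sigma}|_H\qis \uOmega^0_{H,\Sigma\cap H}$. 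For $p\ge 1$, the long exact sequence attached to
\[
\uOmega^{p-1}_{H,\Sigma\cap H}(-H)\to \uOmega^p_{X,\Sigma}|_H\to \uOmega^p_{H,\Sigma\cap H}\xrightarrow{+1}
\]
delivers the desired vanishing: the left term vanishes in degrees $i\ge 1$ by the inductive hypothesis, and the middle by the observation above.

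For the pre-$m$-rational case, I will first establish the log analog of Lemma \ref{lem-conormal},
\[
\uOmega^{p-1}_H(\log \Sigma\cap H)(-H)\to \uOmega^p_X(\log \Sigma)|_H\to \uOmega^p_H(\log \Sigma\cap H)\xrightarrow{+1},
\]
by the same hyperresolution method: take a good hyperresolution $\pi_\kdot:X_\kdot\to X$ of $(X,\Sigma)$ compatible with a general $H$ so that $H_\alpha+\Sigma_\alpha$ is snc on each smooth piece, apply the standard log conormal short exact sequence on each $\alpha$, and push forward using Lemma \ref{lem-log-partial-hyperres} together with base change along $H\hookrightarrow X$. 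Next, I will apply $\bbD_H$ and untwist by $\scrO(-H)$, using $\bbD_H(F(-H))\qis \bbD_H(F)(H)$ and the Grothendieck duality identification $\bbD_H(G|_H)\qis \bbD_X(G)|_H\otimes \scrO_H(H)$, which follows from $i^!\omega_X^\kdot\qis \omega_H^\kdot$ for the closed embedding $i: H\hookrightarrow X$. This produces
\[
\bbD_H(\uOmega^{n-p}_H(\log \Sigma\cap H))(-H)\to \bbD_X(\uOmega^{n-p}_X(\log \Sigma))|_H\to \bbD_H(\uOmega^{n-p-1}_H(\log \Sigma\cap H))\xrightarrow{+1}.
\]
Since $\dim H=n-1$, the outer terms are precisely the degree-$(p-1)$ and degree-$p$ objects controlling pre-$m$-rationality of $(H,\Sigma\cap H)$, and the identical induction on $p$ then closes the case.

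The main obstacle will be the dualization step in the rational case: the bookkeeping of shifts and twists through $\bbD_H$ so that the output triangle carries the twist $\scrO(-H)$ on the term coming from $H$, rather than on the middle term. The hyperresolution proof of the log conormal triangle itself is routine, following the template of Lemma \ref{lem-conormal}, and both inductions are mechanical once the triangles are in hand.
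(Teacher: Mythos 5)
Your proof is correct and takes essentially the same approach as the paper: both use the conormal triangle of Lemma \ref{lem-conormal} (and for the rational case, Grothendieck duality applied to the log conormal triangle, which is exactly \cite[Lemma 5.5]{Kov25}, which you propose to reprove rather than cite) together with an induction on $p$. The only cosmetic difference is that you extract the vanishing of $h^i(\uOmega^p_{X,\Sigma}|_H)$ from the long exact sequence of $F(-H)\to F\to F|_H$, whereas the paper observes that for $H$ general taking cohomology commutes with restriction; both yield the needed vanishing.
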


\begin{proof}
The Du Bois case follows from taking cohomology of the sequence in Lemma \ref{lem-conormal}, as below. By induction on $p$ up to $m$, we may assume that for all $i>0$, $h^i(\uOmega^{p-1}_{H,\Sigma\cap H}(-H)) = 0$. We can choose $H$ general such that taking cohomology commutes with pullback for finitely many given complexes, which is true when $H$ doesn't contain any associated point of their cohomology sheaves. That is, for all $i>0$,
\[
h^i(\uOmega^p_{X, \Sigma}|_H) = h^i(\uOmega^p_{X, \Sigma})|_H = 0.
\] 
Taking cohomology of the conormal sequence \ref{lem-conormal}, we see that $h^i(\uOmega^p_{H,\Sigma\cap H}) =0 $ for all $i>0$ and $0\le p\le m$. Therefore $(H, \Sigma \cap H)$ is pre-$m$-Du Bois.

For the rational case, note that by \cite[Tag 0AU3]{stacks-project}
\[
\bbD_H(-|_H) = i^!\bbD_X(-)[1] = \bbD_X(-)(H)|_H,
\]
where $i:H\to X$ is the inclusion of an effective Cartier divisor. Keep in mind that our convention of $\bbD_X$ includes a shift of $-n$, which produced the $1$ shift in the first quasi-isomorphism. We now dualize the exact triangle of \cite[Lemma 5.5]{Kov25} on $H$ to obtain
\[
\bbD_H(\uOmega_{H}^{(n-1)-(p-1)}(\log \Sigma\cap H)) \to \bbD_X(\uOmega_{X}^{n-p}(\log \Sigma))(H)|_H \to \bbD_H(\uOmega_{H}^{(n-1)-p}(\log \Sigma\cap H))(H)\xrightarrow{+1}.
\]
Again using induction and the fact that $H$ is general, we see that 
\[
h^i(\bbD_H(\uOmega_{H}^{(n-1)-p}(\log \Sigma\cap H))(H)) = 0
\]
for all $i>0$ and $0\le p\le m$. Therefore $(H,\Sigma\cap H)$ is pre-$m$-rational.
\end{proof}

\begin{corollary} \label{cor-hyperplane}
    Let $H\subset X$ be a general hyperplane section, and assume in addition that $X$ has rational singularities. If $(X, \Sigma)$ has $m$-Du Bois (resp. $m$-rational) singularities, then $(H, \Sigma\cap H)$ has $m$-Du Bois (resp. $m$-rational) singularities.
\end{corollary}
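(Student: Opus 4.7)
The plan is to verify, for $(H, \Sigma \cap H)$, the defining conditions of $m$-Du Bois (resp. $m$-rational) beyond what Theorem \ref{thm-hyperplane} already supplies. That theorem yields pre-$m$-Du Bois (resp. pre-$m$-rational) on the pair $(H, \Sigma \cap H)$, so it remains to check: (i) $H$ is seminormal and $\Sigma \cap H$ is a divisor; (ii) $\codim_H \Sing(H, \Sigma \cap H) \ge 2m+1$ (resp. $\ge 2m+2$); and (iii) the reflexivity of $\tOmega^p_{H, \Sigma \cap H}$ (resp. of $h^0(\bbD_H(\uOmega^{(n-1)-p}_H(\log \Sigma \cap H)))$) for all $p \le m$.

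Conditions (i) and (ii) are the easy part. The hypothesis that $X$ has rational singularities makes $X$ normal and Cohen--Macaulay; Seidenberg's theorem then makes a general hyperplane $H$ normal, while Cohen--Macaulayness is inherited under cutting with a regular element, so $H$ is in particular seminormal. Since $(X, \Sigma)$ is $m$-Du Bois (resp. $m$-rational), $\Sigma$ is a divisor in $X$ (in the Du Bois case via the remark after Definition \ref{def-sings}), hence $\Sigma \cap H$ is a divisor in $H$ for generic $H$. For a generic $H$ one has $\Sing(H, \Sigma \cap H) = \Sing(X, \Sigma) \cap H$, so Bertini gives $\codim_H \Sing(H, \Sigma \cap H) = \codim_X \Sing(X, \Sigma)$ and the codimension bound transfers verbatim.

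The main obstacle is condition (iii), which I would prove by induction on $p$. For $p = 0$ the relevant sheaf is the ideal sheaf $\mathscr{I}_{\Sigma \cap H}$ in either case (using Theorem \ref{thm-irrtl-pushforward} on $H$ to identify $h^0(\bbD_H(\uOmega^{n-1}_H(\log \Sigma \cap H)))$ with $\pi_{H,*}\mathcal{O}_{\widetilde H}(-\widetilde{\Sigma \cap H})$ for the rational side), and reflexivity is automatic from $\Sigma \cap H$ being a divisor in the normal $H$. For $p \ge 1$ I would invoke the conormal triangle of Lemma \ref{lem-conormal},
$$\uOmega^{p-1}_{H, \Sigma \cap H}(-H) \to \uOmega^p_{X, \Sigma}|_H \to \uOmega^p_{H, \Sigma \cap H} \xrightarrow{+1},$$
which by the pre-$m$-Du Bois vanishings of Theorem \ref{thm-hyperplane} collapses to a short exact sequence of coherent sheaves for $p \le m$. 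The middle term is reflexive on $H$ because restriction of a reflexive sheaf on a normal variety to a general hyperplane stays reflexive, and the left term is reflexive by induction, twisted by $\mathcal{O}_H(-H)$. Combining this sequence with the codimension bound on $\Sing(H, \Sigma \cap H)$ and the local freeness of $\tOmega^p_{H, \Sigma \cap H}$ on the snc locus of $(H, \Sigma \cap H)$, Lemma \ref{S2-seq-trick} then yields the $S_2$ property and hence reflexivity on the normal $H$.

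The rational case runs in parallel, replacing Lemma \ref{lem-conormal} by the dualized conormal triangle
$$\bbD_H(\uOmega^{(n-1)-(p-1)}_H(\log \Sigma \cap H)) \to \bbD_X(\uOmega^{n-p}_X(\log \Sigma))(H)|_H \to \bbD_H(\uOmega^{(n-1)-p}_H(\log \Sigma \cap H))(H) \xrightarrow{+1}$$
from the proof of Theorem \ref{thm-hyperplane} and inducting on $p$ identically. In both cases the technical heart of the argument is the $S_2$ extraction from the short exact sequence of reflexive neighbors, and I expect this to be the main obstacle; it is precisely where the codimension bound $2m+1$ (resp. $2m+2$) and the local freeness on the snc locus enter.
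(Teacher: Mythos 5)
Your reduction is sound: Theorem \ref{thm-hyperplane} handles the pre-$m$ part, and the divisor, seminormality (via normality of $H$), and codimension conditions transfer to $H$ exactly as you describe. The gap is in your step (iii). The paper does not run an induction through the conormal sequence at all; it observes that since $X$ has rational singularities, so does a general $H$, and then applies Lemmas \ref{Rat-Div-h0} and \ref{Rat-Div-h0-intrinsic} \emph{directly to the pair} $(H, \Sigma\cap H)$. Those lemmas already say that for a rational $H$ with $\Sigma\cap H$ a divisor, $\tOmega^p_{H,\Sigma\cap H}$ and $h^0(\bbD_H(\uOmega^{n-1-p}_H(\log \Sigma\cap H)))$ are reflexive, with no induction needed.

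Your inductive route has two concrete problems. First, the assertion that ``restriction of a reflexive sheaf on a normal variety to a general hyperplane stays reflexive'' is false without extra depth hypotheses: reflexivity is $S_2$ plus torsion-free, and cutting by a regular element drops depth by one, so an $S_2$ but not $S_3$ sheaf can restrict to something that is only $S_1$. (Already the ideal sheaf of a line in $\bbA^3$, which is reflexive, restricts on a general plane to the ideal of a point, which is not reflexive.) So the middle term $\tOmega^p_{X,\Sigma}|_H$ of your exact sequence is not known to be reflexive by the reasoning given. Second, even granting reflexivity of the left and middle terms, Lemma \ref{S2-seq-trick} gives the $S_2$ property of the \emph{kernel} in a left-exact sequence, not of the cokernel; it cannot be used to conclude that the quotient $\tOmega^p_{H,\Sigma\cap H}$ is $S_2$. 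You are trying to propagate $S_2$ to the right, but the lemma only propagates it to the left. Dropping the conormal-sequence induction entirely and invoking the two reflexivity lemmas on $(H,\Sigma\cap H)$, as the paper does, closes both gaps.
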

\begin{proof}
    Note that either of these assumptions forces $\Sigma$ to be a divisor. Since $H$ is general, we may assume $\Sing(H, \Sigma\cap H)\subseteq \Sing (X, \Sigma)$, which shows that the codimension condition for either situation holds on $H$. Moreover, since $X$ has rational singularities, so does $H$ general. Using lemmas \ref{Rat-Div-h0} and \ref{Rat-Div-h0-intrinsic}, we see that both $\tOmega_{H,\Sigma}^p$ and $h^0(\bbD_X(\uOmega_{H,\Sigma}^{n-p}))$ are reflexive, which is all we needed to show given Theorem \ref{thm-hyperplane}.
\end{proof}

We end the section on Bertini's theorem by deducing the following lemma, which will be useful in establishing the main injectivity theroem.

\begin{lemma}\label{lem-premDB-hyperplane}
If $(X,\Sigma)$ has pre-$m$-Du Bois singularities, we have for all $p\le m$
\begin{equation}
    \tf_p^{X, \Sigma}\to \tf_p^{X,\Sigma}(\log H)\to \tf_{p-1}^{H,\Sigma_H}[-1] \xrightarrow{+1}. \label{lem-pair-hyperplane-tf}
\end{equation}
Consequently, $\tf_p^{X,\Sigma}(\log H)\qis \uf_p^{X,\Sigma}(\log H)$ for $p\le m$.
\end{lemma}

\begin{proof}
By Theorem \ref{thm-hyperplane} $H$ also has pre-$m$-Du Bois singularities. Since $h^1(\uOmega^p_{X, \Sigma})=0$ for all $p\le m$, the long exact sequence on cohomology for \ref{lem-pair-hyperplane-omega} becomes
\[
0\to \tOmega^p_{X, \Sigma}\to \tOmega^p_{X,\Sigma}(\log H)\to \tOmega^{p-1}_{H,\Sigma_H} \to 0.
\]
As $\tf^{X,\Sigma}_p$ and $\tf^{H,\Sigma_H}_p$ are associated to the ``filtration b\^ete'' on $\tOmega_{X,\Sigma}^\kdot$ and $\tOmega_{H,\Sigma_H}^\kdot$ respectively, we automatically have \ref{lem-pair-hyperplane-tf} on cofiltrations.

The final statement then following from the $5$-lemma, \ref{lem-pair-hyperplane-tf}, and \ref{lem-pair-hyperplane-omega}.
\end{proof}

%\newpage

\section{Injectivity Theorem and Applications}

\begin{theorem} \label{inj-thm}
Let $(X, \Sigma)$ be a pair with pre-$(m-1)$-Du Bois singularities. Then for each $p \le m$, the natural map $\bbD_X(\uOmega_{X,\Sigma}^p) \rightarrow \bbD_X(\tOmega_{X,\Sigma}^p)$ induces injections on cohomology sheaves,

$$ h^i( \bbD_X(\ou)) \hookrightarrow h^i (\bbD_X(\ot)) $$
for all $i$.
\end{theorem}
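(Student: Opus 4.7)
The plan is to adapt the strategy of \cite[Theorem 1.1]{Kov25} to the pair setting, using the cyclic cover decomposition \ref{prop_pair_cover} and the cofiltration surjectivity \ref{prop_surjecitivty_cover} as the main technical inputs. Note first that for $p \le m-1$ the conclusion is automatic: the pre-$(m-1)$-Du Bois hypothesis gives $\tOmega^p_{X,\Sigma} \qis \uOmega^p_{X,\Sigma}$, so the map of the theorem is an isomorphism. The real content is the case $p = m$.

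First I would reduce to a projective setting. Injectivity on cohomology sheaves is local on $X$, so after replacing $X$ with a projective compactification of an open neighborhood of a chosen point (and extending $\Sigma$ to its closure), we may assume $X$ is projective; the pre-$(m-1)$-Du Bois hypothesis is retained on the given neighborhood. Fix an ample line bundle $\scrL$ on $X$. For $k\gg 0$, the sheaf injectivity becomes equivalent (after twisting by $\scrL^k$, passing to global sections, and applying Serre duality) to the surjectivity
\[
\bbH^q(X, \tf_m^{X,\Sigma}\otimes \scrL^{-k}) \twoheadrightarrow \bbH^q(X, \uf_m^{X,\Sigma}\otimes \scrL^{-k})
\]
for every $q$, where the passage to cofiltrations uses the pre-$(m-1)$-Du Bois hypothesis to identify the cone of $\tf_m\to \uf_m$ with that of $\tOmega^m_{X,\Sigma}\to \uOmega^m_{X,\Sigma}$ up to shift.

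To establish this surjectivity, fix $n > k$ and a general section $s\in H^0(X, \scrL^n)$ with zero divisor $H$, and form the associated cyclic cover $\eta: Y\to X$. Proposition \ref{prop_pair_cover} yields a direct sum decomposition of $\eta_*\tf_m^{Y,\Sigma'}$ (and analogously $\eta_*\uf_m^{Y,\Sigma'}$) into $\tf_m^{X,\Sigma}$ together with log summands $\tf_m^{X,\Sigma}(\log H)\otimes \scrL^{-i}$ for $0<i<n$; Proposition \ref{prop_surjecitivty_cover} then gives the required surjectivity on the log summand indexed by $i = k$. To transfer this log surjectivity to the non-log cofiltration surjectivity, combine the cofiltration triangle of Lemma \ref{lem-pair-hyperplane-triangles} (exact precisely by the pre-$(m-1)$-Du Bois hypothesis) with its $\uf$-counterpart, and induct on $m$; the Bertini-type Theorem \ref{thm-hyperplane} propagates pre-$(m-1)$-Du Bois to $(H, \Sigma\cap H)$, so the hyperplane term $\uf^{H,\Sigma_H}_{m-1}$ appearing in the triangle is controlled by the inductive hypothesis.

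The main obstacle will be the bookkeeping across these log and hyperplane-section triangles: one must simultaneously track cofiltration indices, $\scrL^{-i}$ twists, and log poles along $H$, and verify that the pre-$(m-1)$-Du Bois hypothesis is invoked at each step to keep the relevant triangles exact. A secondary subtlety is choosing $k$ and $n$ compatibly so that Serre duality computes the right hypercohomology, the cyclic cover exists with sufficiently general $H$, and the induction on hyperplane sections terminates cleanly.
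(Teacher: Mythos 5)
The crucial gap is at the step where you plan to ``transfer this log surjectivity to the non-log cofiltration surjectivity'' by ``combin[ing] the cofiltration triangle of Lemma \ref{lem-pair-hyperplane-triangles} (exact precisely by the pre-$(m-1)$-Du Bois hypothesis).'' Re-read the hypothesis of that lemma: the triangle
\[
\tf_p^{X,\Sigma} \to \tf_p^{X,\Sigma}(\log H) \to \tf_{p-1}^{H,\Sigma_H}[-1] \xrightarrow{+1}
\]
is only asserted for $p\le m$ under the assumption that $(X,\Sigma)$ is pre-\emph{$m$}-Du Bois, precisely because its proof requires $h^1(\uOmega^p_{X,\Sigma})=0$ to split the long exact sequence into the short exact sequence of $\tOmega$'s. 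Under the weaker pre-$(m-1)$-Du Bois hypothesis that you actually have, this triangle is not available at the one degree $p=m$ that you correctly identify as ``the real content.'' That failure is not bookkeeping; it is the exact obstruction the paper isolates (the top row of diagram \ref{problem-diagram} is not a triangle), and the paper's entire appendix machinery --- the inductively defined replacement complex $\gtw$ of Definition \ref{g-twidle-def}, built so that the log triangle holds by construction, together with the companion object $\ugt$ replacing $\tOmega^p_{X,\Sigma}$ --- exists precisely to route around it. The key surjectivity one can actually prove (Theorem \ref{G-surj}) is for $\ugt$, not for $\ot$.

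A secondary but also substantive issue is the claim that ``the sheaf injectivity becomes equivalent \dots to the surjectivity $\bbH^q(\tf_m\otimes\scrL^{-k})\twoheadrightarrow\bbH^q(\uf_m\otimes\scrL^{-k})$.'' Once the surjectivity you can prove is for $\ugt$ rather than $\ot$, there is no direct Serre-duality dictionary back to sheaf injectivity, because $\ugt$ and $\ot$ agree only on $X\setminus H$. The paper's final argument cuts down by general hyperplanes (using the Bertini statement, Theorem \ref{thm-hyperplane}, to preserve the pre-$(m-1)$-Du Bois hypothesis) until the kernel $\mathscr{K}$ of the cohomology map has zero-dimensional support $Z$, and then contradicts the existence of a nonzero section of $\mathscr{K}$ by observing that its image under the $\ugt$-injection is supported both on $Z$ and on $H$, and $Z\cap H=\varnothing$ by generality of $H$. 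Both of these missing ingredients --- the auxiliary complexes $\gtw, \ugt$ and the support/generality contradiction --- are essential; without them the outline stalls at the case $p=m$.
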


We defer the proof of \ref{inj-thm} until section \ref{proof-inj-thm}.

\begin{theorem} \label{splits}
Let $(X, \Sigma)$ be a complex pair and assume the natural map $\ot \rightarrow \ou$ admits a left inverse for all $p \le m$. Then $(X, \Sigma)$ is pre-$m$-Du Bois.
\end{theorem}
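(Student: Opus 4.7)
The plan is to induct on $m$, with the induction powered by the injectivity theorem (Theorem~\ref{inj-thm}) and biduality. For the base case $m=0$, the hypothesis of Theorem~\ref{inj-thm} (pre-$(-1)$-Du Bois) is vacuous, so its conclusion applies for $p=0$. For the inductive step, if the splitting hypothesis holds for all $p\le m$, then in particular it holds for all $p\le m-1$, so by the induction hypothesis $(X,\Sigma)$ is pre-$(m-1)$-Du Bois, and Theorem~\ref{inj-thm} applies for every $p \le m$.

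The core argument is then a standard splitting-plus-injectivity ping-pong. Write $\alpha: \tOmega^p_{X,\Sigma} \to \uOmega^p_{X,\Sigma}$ for the natural map, and let $\beta: \uOmega^p_{X,\Sigma}\to \tOmega^p_{X,\Sigma}$ be the assumed left inverse, so $\beta\circ\alpha = \id$. Applying $\bbD_X$ reverses arrows, so $\bbD_X(\alpha)\circ\bbD_X(\beta)=\id$ on $\bbD_X(\tOmega^p_{X,\Sigma})$. In particular, passing to cohomology sheaves, each $h^i(\bbD_X(\alpha))$ admits a section $h^i(\bbD_X(\beta))$ and is therefore surjective.

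Combining this with Theorem~\ref{inj-thm}, which tells us $h^i(\bbD_X(\alpha))$ is injective for all $i$, we get that $h^i(\bbD_X(\alpha))$ is an isomorphism for every $i$. Hence $\bbD_X(\alpha)$ is a quasi-isomorphism. Since $\tOmega^p_{X,\Sigma}$ and $\uOmega^p_{X,\Sigma}$ are bounded complexes with coherent cohomology, Grothendieck--Serre biduality gives $\bbD_X\circ\bbD_X\simeq \id$ on these objects, and so applying $\bbD_X$ a second time shows that $\alpha$ itself is a quasi-isomorphism. This forces $h^i(\uOmega^p_{X,\Sigma})=0$ for all $i>0$ and $p\le m$, which is exactly the statement that $(X,\Sigma)$ is pre-$m$-Du Bois.

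The potential pitfalls are mild and mostly bookkeeping: one must check that the induction base actually lands inside the hypotheses of Theorem~\ref{inj-thm} (it does, vacuously), and that biduality is valid for $\uOmega^p_{X,\Sigma}$ and $\tOmega^p_{X,\Sigma}$ (it is, since both sit in $D^b_{\mathrm{coh}}(X)$). Once these are observed, the proof is a single pass through the above three-line argument for each $p\le m$.
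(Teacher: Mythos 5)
Your proposal is correct and follows essentially the same approach as the paper: an induction on $m$ that feeds the splitting hypothesis into Theorem~\ref{inj-thm}, concludes that $h^i(\bbD_X(\alpha))$ is bijective for all $i$, and then undoes the duality. The only difference is that you spell out the vacuous base case and the biduality step explicitly, which the paper leaves implicit.
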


\begin{proof}
By induction, we may assume $X$ is pre-$(m-1)$-Du Bois. Our assumption implies that $\bbD_X(\uOmega_{X,\Sigma}^m) \rightarrow \bbD_X(\tOmega_{X,\Sigma}^m)$ is has a right inverse, so $h^i(\bbD_X(\uOmega_{X,\Sigma}^m)) \rightarrow h^i (\bbD_X(\tOmega_{X,\Sigma}^m))$ is surjective for all $i$. By \ref{inj-thm}, $h^i(\bbD_X(\uOmega_{X,\Sigma}^m)) \rightarrow h^i (\bbD_X(\tOmega_{X,\Sigma}^m))$ is an isomorphism for all $i$, i.e. $\bbD_X(\uOmega_{X,\Sigma}^m) \rightarrow \bbD_X(\tOmega_{X,\Sigma}^m)$ is a quasi-isomorphism. Dualizing again, we get that $\ot \rightarrow \ou$ is a quasi-isomorphism, as desired.
\end{proof}

Our next result is a generalization of \cite[Theorem B]{SVV23} and \cite[10.11]{Kov25} to the pair case. Note that in the non-pair case, when $X$ is normal, pre $m$-rational implies $X$ is rational.

\begin{theorem} \label{pre-Rat-pre-DB}
Let $(X,\Sigma)$ be a pre-$m$-rational pair. Assume that $X$ has rational singularities. Then $(X,\Sigma)$ is pre-$m$-Du Bois.
\end{theorem}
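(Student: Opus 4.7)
The plan is to apply the splitting criterion Theorem \ref{splits}: it suffices to construct, for each $p\le m$, a left inverse to the truncation map $\tOmega^p_{X,\Sigma}\to \uOmega^p_{X,\Sigma}$ in the derived category. The key ingredient is the natural morphism
\[
\uOmega^p_{X,\Sigma} \to \bbD_X(\uOmega^{n-p}_X(\log \Sigma))
\]
from Proposition \ref{prop:map}. The pre-$m$-rationality hypothesis forces the target to be concentrated in degree $0$; denote the resulting sheaf by $\mathcal{G}^p$. Since $\mathcal{G}^p$ is a sheaf and $\uOmega^p_{X,\Sigma}\in D^{\ge 0}(X)$, the morphism above is determined by its induced sheaf map $\phi_p: \tOmega^p_{X,\Sigma}\to \mathcal{G}^p$ on $h^0$, and the composition $\tOmega^p_{X,\Sigma}\to \uOmega^p_{X,\Sigma}\to \mathcal{G}^p$ agrees with $\phi_p$.

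The heart of the proof is to show $\phi_p$ is an isomorphism of sheaves. Granting this, inverting produces $\mathcal{G}^p \xrightarrow{\sim} \tOmega^p_{X,\Sigma}$, and the composition
\[
\uOmega^p_{X,\Sigma} \to \mathcal{G}^p \xrightarrow{\sim} \tOmega^p_{X,\Sigma}
\]
is by construction a left inverse of the truncation map $\tOmega^p_{X,\Sigma}\to \uOmega^p_{X,\Sigma}$. Theorem \ref{splits} then delivers pre-$m$-Du Bois.

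To establish the sub-claim, both $\tOmega^p_{X,\Sigma}$ and $\mathcal{G}^p$ restrict to $\Omega^p_X(\log \Sigma)(-\Sigma)$ over the snc locus $U$ of $(X,\Sigma)$, so $\phi_p|_U$ is an isomorphism. The rationality of $X$ combined with Theorem \ref{thm-irrtl-pushforward} identifies $\mathcal{G}^p$ with the pushforward $\pi_*\Omega^p_{\widetilde X,\widetilde \Sigma}(\log E)$ of a locally free sheaf under a log resolution $\pi:\widetilde X\to X$, yielding reflexivity on the right-hand side; and applying Lemma \ref{S2-seq-trick} to the defining exact sequence of $\tOmega^p_{X,\Sigma}$ (using rationality of $X$ to control $\tOmega^p_X$) gives $S_2$-ness on the left-hand side. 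Agreement on $U$ with codimension at least $2$ complement then forces $\phi_p$ to be an isomorphism.

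The principal obstacle I anticipate is the codimension issue: pre-$m$-rational does not a priori impose the codimension condition of full $m$-rational, so one must separately rule out codimension-$1$ discrepancies along $\Sigma$. This requires careful use of the rationality of $X$ and the fact that $\Sigma$ must be a divisor in the pre-$m$-rational setting. An alternative route is to induct on $p$ and invoke Theorem \ref{inj-thm} at each step --- which becomes available under the inductive hypothesis of pre-$(p-1)$-Du Bois --- to promote agreement of $\phi_p$ on $U$ to a global isomorphism, thereby circumventing the codimension-$1$ difficulty.
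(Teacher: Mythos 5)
Your strategy is the same one the paper uses: the composition $\tOmega^p_{X,\Sigma}\to \uOmega^p_{X,\Sigma}\to \bbD_X(\uOmega^{n-p}_X(\log\Sigma))$ is a quasi-isomorphism once the target collapses to a sheaf (pre-$m$-rationality), which provides the left inverse needed for Theorem \ref{splits}. However, the justification you offer for the key sub-claim is incomplete in a way the paper's Lemmas \ref{Rat-Div-h0} and \ref{Rat-Div-h0-intrinsic} are specifically designed to address.

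The main gap is the reflexivity of $\mathcal{G}^p \cong \pi_*\Omega^p_{\widetilde X,\widetilde\Sigma}(\log E)$. You assert this holds because it is ``the pushforward of a locally free sheaf,'' but that reasoning fails: pushforward under a proper birational morphism does \emph{not} preserve reflexivity. Already $\pi_*\mathscr{O}_{\widetilde{\bbA^2}}(-E) = \mathfrak{m}_0$ for the blow-up of a point is not reflexive. The actual argument (Lemma \ref{Rat-Div-h0}) applies Lemma \ref{S2-seq-trick} to the sequence $0\to \pi_*\Omega^p_{\widetilde X,\widetilde\Sigma}(\log E)\to \pi_*\Omega^p_{\widetilde X}(\log E)\to \pi_*\Omega^p_{\widetilde\Sigma}(\log E\cap\widetilde\Sigma)$, and crucially invokes Kebekus--Schnell (Theorem \ref{KS}) to know the middle term $\pi_*\Omega^p_{\widetilde X}(\log E)\cong\Omega_X^{[p]}$ is reflexive on the rational variety $X$; this is the substantive input you omitted. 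Relatedly, identifying $h^0(\bbD_X(\uOmega^{n-p}_X(\log\Sigma)))$ with $\pi_*\Omega^p_{\widetilde X,\widetilde\Sigma}(\log E)$ is immediate from Theorem \ref{thm-irrtl-pushforward} only in the range $p\le n-\dim\Sing(X,\Sigma)$; for arbitrary $p\le m$ one needs the Cohen--Macaulay vanishing argument of Lemma \ref{Rat-Div-h0-intrinsic}, which again exploits rationality of $X$.

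The ``codimension-$1$ obstacle'' you flag is actually not an issue: once $X$ is normal (as it is, being rational) and $\Sigma$ is a reduced divisor, the snc locus has complement of codimension $\ge 2$, and two $S_2$ sheaves agreeing there must agree everywhere. Your alternative route --- induction on $p$ plus the injectivity Theorem \ref{inj-thm} --- is not really an alternative, since Theorem \ref{splits} already encapsulates precisely that induction together with \ref{inj-thm}; invoking it directly is the efficient formulation, which is what both you and the paper ultimately do.
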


Now fix a strong log resolution $(\widetilde{X}, \widetilde{\Sigma}) \rightarrow (X, \Sigma)$. We need the following result of Kebekus-Schnell on extending logarithmic forms,

\begin{theorem}[{\cite[1.4]{KS21}}] \label{KS}
Let $X$ be a variety with rational singularities. Then
$$ \otw \cong \pi_* \Omega_{\widetilde{X}}^p(\textup{log } E) \cong \Omega_X^{[p]} $$
where $\Omega_X^{[p]}$ is the reflexive hull of $\Omega_X^p$.
\end{theorem}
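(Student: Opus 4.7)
The theorem comprises two isomorphisms: $\tOmega_X^p \cong \pi_* \Omega_{\widetilde{X}}^p(\log E)$ and $\pi_* \Omega_{\widetilde{X}}^p(\log E) \cong \Omega_X^{[p]}$. The plan is to prove them sequentially, noting that only the second carries real content beyond unpacking definitions and applying earlier results of this paper.

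For the first isomorphism, I would apply Theorem \ref{thm-irrtl-pushforward} to the trivial pair $(X, \emptyset)$, which yields $\bbD_X(\uOmega_X^{n-p}) \qis R\pi_* \Omega_{\widetilde{X}}^p(\log E)$ in the appropriate range of $p$. Taking the zeroth cohomology sheaf identifies $h^0(\bbD_X(\uOmega_X^{n-p}))$ with $\pi_* \Omega_{\widetilde{X}}^p(\log E)$. To pass from this dual picture to $\tOmega_X^p = h^0(\uOmega_X^p)$, I would exploit rationality of $X$: the natural comparison map from $\tOmega_X^p$ to $h^0(\bbD_X(\uOmega_X^{n-p}))$ is the identity on $X_{\mathrm{reg}}$, so once both sides are known to be reflexive, they must coincide. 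An alternative, more direct route is to compute $\uOmega_X^p$ via a cubical hyperresolution dominating a log resolution, using that for rational singularities the higher $R^i\pi_*$ contributions to $h^0(\uOmega_X^p)$ vanish, reducing $\tOmega_X^p$ to $\pi_* \Omega_{\widetilde{X}}^p(\log E)$ directly.

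For the second isomorphism, the structure of the argument is: construct a natural injection $\pi_* \Omega_{\widetilde{X}}^p(\log E) \hookrightarrow \Omega_X^{[p]}$ using that both sheaves restrict to $\Omega_{X_{\mathrm{reg}}}^p$ on the smooth locus, and conclude it is an isomorphism by showing $\pi_* \Omega_{\widetilde{X}}^p(\log E)$ is reflexive. Since two reflexive sheaves on a normal variety that agree outside codimension two must coincide, reflexivity is the entire content. The Kebekus--Schnell strategy is Hodge-theoretic: realize $\pi_* \Omega_{\widetilde{X}}^p(\log E)$ as a graded piece of the Hodge filtration on the intermediate extension of the constant Hodge module on $X_{\mathrm{reg}}$, then apply strict compatibility of the Hodge filtration under proper pushforward together with Saito's vanishing theorem to extend locally defined log forms across the exceptional locus.

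The hard part is establishing the reflexivity in the second isomorphism, which is equivalent to the extension theorem for differential forms on singular varieties with rational singularities. A purely sheaf-theoretic or resolution-theoretic argument does not seem to suffice; the full machinery of Saito's mixed Hodge modules, and in particular the vanishing theorem for log forms on a log resolution, appears essential. An elementary reduction via cyclic covers in the spirit of Section 4 of the present paper would supply certain cohomological surjectivity statements but not the reflexive extension property itself. For this reason, I would proceed by invoking the Kebekus--Schnell result directly as a black box at step (ii), rather than attempting to reprove it.
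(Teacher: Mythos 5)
The paper does not prove this statement at all: it is imported verbatim as \cite[Theorem 1.4]{KS21}, so your final decision to invoke Kebekus--Schnell as a black box is exactly the paper's treatment, and you have correctly located where the genuinely hard, Hodge-module-theoretic content lives (the reflexivity/extension statement for $\pi_*\Omega^p_{\widetilde X}(\log E)$).

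That said, the preliminary ``derivation'' you sketch for the first isomorphism does not actually work as an independent argument, and it is worth being clear about why. Theorem \ref{thm-irrtl-pushforward} with $\Sigma=\emptyset$ (or Lemma \ref{Rat-Div-h0-intrinsic}) identifies $h^0(\bbD_X(\uOmega_X^{n-p}))$ with $\pi_*\Omega^p_{\widetilde X}(\log E)$ -- this is the \emph{rational-side} object. The theorem you are proving concerns $\tOmega_X^p = h^0(\uOmega_X^p)$, the \emph{Du Bois-side} object, and the natural map $\tOmega_X^p \to h^0(\bbD_X(\uOmega_X^{n-p}))$ being an isomorphism is part of the content of [KS21], not a formal consequence of rational singularities. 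Your proposed bridge -- ``once both sides are known to be reflexive, they must coincide'' -- is circular, since reflexivity of $\tOmega_X^p$ is precisely one of the assertions being proved ($\tOmega_X^p \cong \Omega_X^{[p]}$). Likewise, your ``alternative direct route'' asserting that for rational singularities the higher $R^i\pi_*$ contributions to $h^0(\uOmega_X^p)$ vanish is not something that follows formally for $p\ge 1$; for $p=0$ it is Grauert--Riemenschneider plus normality, but for higher $p$ it again requires the Hodge-theoretic input. So the honest structure of the proof is: all three identifications ($\tOmega_X^p$, $\pi_*\Omega^p_{\widetilde X}(\log E)$, and $\Omega_X^{[p]}$) are tied together by the Kebekus--Schnell machinery, and none of the three pairwise comparisons is elementary. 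Since the paper cites the result rather than reproving it, the right move is simply to quote [KS21, Theorem 1.4] without the intermediate sketch.
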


\begin{lemma} \label{Rat-Div-h0}
Let $(X,\Sigma)$ be a pair. Assume that $X$ is rational and $\Sigma$ is a divisor. Then $\ot$ is reflexive and the natural map

$$ \ot \rightarrow \pi_* \Omega^p_{\widetilde{X},\widetilde{\Sigma}}(\textup{log } E) $$
is an isomorphism for all $p$.
\end{lemma}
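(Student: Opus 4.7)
The plan is to realize both $\tOmega^p_{X,\Sigma}$ and $\pi_*\Omega^p_{\widetilde{X},\widetilde{\Sigma}}(\log E)$ as reflexive subsheaves of $\Omega_X^{[p]}$ that coincide on the SNC locus of the pair $(X,\Sigma)$, and then use reflexivity to conclude equality.

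I would first apply the Kebekus--Schnell theorem (Theorem \ref{KS}) to identify $\tOmega^p_X \cong \pi_*\Omega^p_{\widetilde{X}}(\log E) \cong \Omega_X^{[p]}$, which is reflexive. Taking $h^0$ of the defining triangle $\uOmega^p_{X,\Sigma}\to \uOmega^p_X \to \uOmega^p_\Sigma \xrightarrow{+1}$ embeds $\tOmega^p_{X,\Sigma}$ into $\Omega_X^{[p]}$ as the kernel of the map to $\tOmega^p_\Sigma$. Since $\tOmega^p_\Sigma$ is torsion free on $\Sigma$, Lemma \ref{S2-seq-trick} will apply and show that $\tOmega^p_{X,\Sigma}$ is $S_2$ and torsion free on the normal variety $X$, hence reflexive. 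This already establishes the first assertion of the lemma.

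Next, on the smooth $\widetilde{X}$ with SNC divisor $\widetilde{\Sigma}+E$, the Esnault--Viehweg residue sequence \cite[2.3]{EV92}, twisted by $\scrO_{\widetilde{X}}(-\widetilde{\Sigma})$, yields
\[
0 \to \Omega^p_{\widetilde{X},\widetilde{\Sigma}}(\log E) \to \Omega^p_{\widetilde{X}}(\log E) \to \Omega^p_{\widetilde{\Sigma}}(\log E|_{\widetilde{\Sigma}}) \to 0.
\]
Pushing forward along $\pi$ realizes $\pi_*\Omega^p_{\widetilde{X},\widetilde{\Sigma}}(\log E)$ as a subsheaf of $\pi_*\Omega^p_{\widetilde{X}}(\log E) = \Omega_X^{[p]}$. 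A second application of Lemma \ref{S2-seq-trick}, using that the pushforward of $\Omega^p_{\widetilde{\Sigma}}(\log E|_{\widetilde{\Sigma}})$ is torsion free on $\Sigma$ (as the pushforward of a locally free sheaf under a surjection), will show that this subsheaf is also reflexive on $X$.

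Finally, on the SNC locus $U\subseteq X$ of $(X,\Sigma)$, the resolution $\pi$ is an isomorphism with $E$ empty, so both subsheaves visibly coincide with $\Omega^p_X(\log \Sigma)(-\Sigma) \hookrightarrow \Omega^p_X$. The complement $X\setminus U$ lies in $\Sing X$ together with the non-SNC locus of $\Sigma$ in $X$, both of codimension at least $2$ in $X$ since $X$ is normal and $\Sigma$ is a reduced divisor whose components are generically smooth and meet transversally off a codimension-$2$ subset. As two reflexive subsheaves of $\Omega_X^{[p]}$ agreeing outside codimension $2$ must coincide, the natural functoriality map $\tOmega^p_{X,\Sigma} \to \pi_*\Omega^p_{\widetilde{X},\widetilde{\Sigma}}(\log E)$ will be an isomorphism. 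The main obstacle is checking that these two a priori distinct constructions really produce the same inclusion into $\Omega_X^{[p]}$, which ultimately reduces to identifying both with the obvious inclusion $\Omega^p_X(\log\Sigma)(-\Sigma)\hookrightarrow \Omega^p_X$ on the SNC locus.
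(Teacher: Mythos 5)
Your proposal is correct and takes essentially the same approach as the paper's proof: apply the Kebekus--Schnell theorem to identify the middle terms with the reflexive $\Omega_X^{[p]}$, use Lemma \ref{S2-seq-trick} twice (once for $\tOmega^p_{X,\Sigma}$ via the cohomology sequence of the defining triangle, once for the pushforward via the Esnault--Viehweg residue sequence on $\widetilde{X}$) to establish $S_2$, and conclude via agreement on the snc locus. The paper presents the agreement-on-snc-locus observation first and then reduces to showing both sides are $S_2$, but the content is identical.
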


\begin{proof}
As both sides agree on the locus where $(X,\Sigma)$ is snc, it suffices to show that both sheaves are $S_2$. First, we show $\ot$ is $S_2$. We have an exact sequence
$$ 0 \rightarrow \ot \rightarrow \widetilde{\Omega}_X^p \rightarrow \widetilde{\Omega}_\Sigma^p $$

Since $X$ is rational, $\widetilde{\Omega}_X^p$ is $S_2$ by \ref{KS}. The sheaf $\widetilde{\Omega}_\Sigma^p$ is torsion free on $\Sigma$ so $\ot$ is $S_2$ by Lemma \ref{S2-seq-trick}. Now we show $\pi_* \Omega^p_{\widetilde{X},\widetilde{\Sigma}}(\textup{log } E)$ is $S_2$ with the same technique. We have an exact sequence,

$$ 0 \rightarrow \pi_* \Omega^p_{\widetilde{X},\widetilde{\Sigma}}(\textup{log } E) \rightarrow \pi_* \Omega^p_{\widetilde{X}}(\textup{log } E) \rightarrow \pi_* \Omega^p_{\widetilde{\Sigma}}(\textup{log } E \cap \widetilde{\Sigma}) $$

Once the again the middle term is $S_2$ by \ref{KS} and the right term is torsion free on $\Sigma$ since it is the pushforward of a torsion free sheaf on $\widetilde{\Sigma}$.
\end{proof}

\begin{lemma} \label{Rat-Div-h0-intrinsic}
    If $X$ is Cohen-Macaulay and normal, then the natural map
    \[
    h^0(\bbD_X(\uOmega^{n-p}_{X}(\log \Sigma)))\to \pi_*\Omega^p_{\widetilde{X}, \widetilde{\Sigma}}(\log E)
    \]
    is an isomorphism for all $p$.
\end{lemma}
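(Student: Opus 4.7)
The plan is to derive the lemma from the dualized distinguished triangle associated to the triple $(X, S, \Sigma)$ with $S = \Sing(X, \Sigma)$:
\[
\bbD_X(\uOmega^{n-p}_S(\log \Sigma|_S)) \to \bbD_X(\uOmega^{n-p}_X(\log \Sigma)) \to \bbD_X(\uOmega^{n-p}_{X,S}(\log \Sigma)) \xrightarrow{+1}.
\]
The computation at the heart of the proof of Theorem \ref{thm-irrtl-pushforward}, namely Corollary \ref{cor-irrtl-psuhforward} combined with Grothendieck duality on the smooth space $\widetilde{X}$, yields the quasi-isomorphism $\bbD_X(\uOmega^{n-p}_{X,S}(\log \Sigma)) \qis R\pi_*\Omega^p_{\widetilde{X}, \widetilde{\Sigma}}(\log E)$ for every $p$. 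The natural map of the lemma is then obtained by taking $h^0$ of the second arrow in the triangle above.

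Since $R\pi_*$ has no cohomology in negative degrees, the cohomology sheaf $h^{-1}(\bbD_X(\uOmega^{n-p}_{X,S}(\log \Sigma)))$ vanishes, and the relevant segment of the long exact sequence reads
\[
0 \to h^0(\bbD_X(\uOmega^{n-p}_S(\log \Sigma|_S))) \to h^0(\bbD_X(\uOmega^{n-p}_X(\log \Sigma))) \to \pi_*\Omega^p_{\widetilde{X}, \widetilde{\Sigma}}(\log E) \to h^1(\bbD_X(\uOmega^{n-p}_S(\log \Sigma|_S))).
\]
Thus the lemma reduces to showing that $h^0$ and $h^1$ of $\bbD_X(\uOmega^{n-p}_S(\log \Sigma|_S))$ vanish. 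This is where the hypotheses enter: normality of $X$ together with $\Sigma$ being a reduced divisor forces $c := \codim_X S \ge 2$.

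To obtain the vanishing, I would apply Grothendieck duality for the closed immersion $\iota: S \hookrightarrow X$. Using that $X$ is Cohen--Macaulay, one has $\bbD_X(\iota_* F) \qis \iota_* \bbD_S(F)[-c]$ for any coherent complex $F$ on $S$, and hence $h^i(\bbD_X(\iota_* F)) \cong \iota_* h^{i-c}(\bbD_S(F))$. It therefore suffices to show that $\bbD_S(\uOmega^{n-p}_S(\log \Sigma|_S))$ lies in non-negative cohomological degrees. For this I would use a Postnikov filtration argument based on the structural fact that the higher cohomology sheaves $h^j(\uOmega^{n-p}_S(\log \Sigma|_S))$ for $j \ge 1$ are supported on successively deeper singular strata of $S$; iterating Grothendieck duality along the closed immersion of each such stratum should ensure that every graded piece contributes to $\bbD_S(\uOmega^{n-p}_S(\log \Sigma|_S))$ only in non-negative degrees. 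The main obstacle I anticipate is making this iterated stratification argument precise, since $S$ and its singular strata need not themselves be Cohen--Macaulay.
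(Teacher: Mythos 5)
Your proposal follows essentially the same route as the paper's proof: dualize the defining triangle of Definition \ref{def_pair_log_poles} for the triple $(X, S, \Sigma)$, identify $\bbD_X(\uOmega^{n-p}_{X,S}(\log\Sigma))$ with $R\pi_*\Omega^p_{\widetilde{X},\widetilde{\Sigma}}(\log E)$ via Corollary \ref{cor-irrtl-psuhforward} and duality on the smooth $\widetilde X$, note $c=\codim_X S\ge 2$ from normality plus $\Sigma$ being a reduced divisor, and conclude once $h^0$ and $h^1$ of $\bbD_X(\uOmega^{n-p}_S(\log\Sigma|_S))$ vanish. Your reduction via Grothendieck duality for the closed immersion $\iota:S\hookrightarrow X$, yielding that the required vanishing is equivalent to $\bbD_S(\uOmega^{n-p}_S(\log\Sigma|_S))\in D^{\ge 0}$, is a precise and correct restatement of the paper's ``Since $X$ is Cohen--Macaulay, $h^i(\bbD_X(\uOmega^{n-p}_S(\log\Sigma|_S)))=0$ for $i<c$.''

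Two clarifications on the last step, which you flag as the main obstacle. First, the fact that $\bbD_Y(\uOmega^q_Y(\log D))$ lies in non-negative cohomological degrees is a standard property of (log) Du Bois complexes, ultimately coming from the mixed Hodge module description; it is used, in exactly the guise you isolate, throughout \cite{Kov25,MP25,MOPW21,FL22}. You should simply cite it rather than re-derive it via a Postnikov/stratification argument, and the paper is implicitly doing the same. Second, your concern that the singular strata of $S$ need not be Cohen--Macaulay is not a real obstruction: the input you would need for a stratification argument is only the $\Ext$-vanishing $h^j(\bbD_Y(\scrF))=0$ for $j<\codim_Y\operatorname{Supp}\scrF$ applied to a coherent sheaf $\scrF$ on an auxiliary variety $Y$, and this holds for arbitrary $Y$ by local duality (it uses only that $\omega_Y^\kdot$ sits in degrees $[-\dim Y, 0]$, not Cohen--Macaulayness of $Y$). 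Relatedly, your identity $\bbD_X(\iota_*F)\qis\iota_*\bbD_S(F)[-c]$ is a direct consequence of $\iota^!\omega_X^\kdot\qis\omega_S^\kdot$ and needs no Cohen--Macaulay hypothesis; the role of $X$ Cohen--Macaulay in the paper's proof is chiefly to write $\bbD_X(-)=R\HOM(-,\omega_X)$ so that the $\Ext$-vanishing against a single sheaf $\omega_X$ is immediately visible.
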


\begin{proof}
    By Theorem \ref{thm-irrtl-pushforward} and the defining triangle of the log complex of a pair, we have
    \[
    R\pi_*\Omega^p_{\widetilde{X}, \widetilde{\Sigma}}(\log E) \to
    \bbD_X(\uOmega^{n-p}_X(\log \Sigma)) \to
    \bbD_X(\uOmega^{n-p}_{S}(\log \Sigma|_{S})) \xrightarrow{+1}.
    \]
    % \[
    % \bbD_X(\uOmega^{n-p}_{S}(\log \Sigma|_{S})) \qis \bbD_S(\uOmega^{n-p}_{S}(\log \Sigma|_{S}))[-c].
    % \]
    Since $X$ is Cohen-Macaulay, we have
    \[
    h^i(\bbD_X(\uOmega^{n-p}_{S}(\log \Sigma|_{S})))=0
    \]
    for $i< c=\codim_X S$. But the assumption implies that $c\ge 2$, since the non-snc locus of $\Sigma$ is automatically of $\codim \ge 2$ in $X$. The isomorphism on $h^0$ then follows immediately from the long exact sequence on cohomology.
\end{proof}

\begin{proof}[Proof of Theorem \ref{pre-Rat-pre-DB}]
By induction, we assume $X$ is pre-$(m-1)$-Du Bois. By Lemma \ref{Rat-Div-h0}, Lemma \ref{Rat-Div-h0-intrinsic} and the assumption that $(X,\Sigma)$ is pre-$k$-rational, we have that the composition

$$ \ot \rightarrow \ou \rightarrow \bbD_X(\uOmega^{n-p}_{X}(\log \Sigma)) $$
is a quasi-isomorphism. Therefore $\ot \rightarrow \ou$ splits and the theorem follows from Theorem \ref{splits}.
\end{proof}

\begin{corollary}
    Let $(X,\Sigma)$ be a $m$-rational (resp. strict-$m$-rational) pair. Assume that $X$ has rational singularities. Then $(X,\Sigma)$ is $m$-Du Bois (resp. strict-$m$-Du Bois). \hfill\qedsymbol
\end{corollary}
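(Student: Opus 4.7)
My plan is to handle the two implications in parallel, building on Theorem~\ref{pre-Rat-pre-DB}, which already supplies the pre-$m$-Du Bois condition in either case. After that, the full $m$-Du Bois conclusion is a short check of the auxiliary conditions in Definition~\ref{def-sings}, while the strict case follows from a two-out-of-three argument on the natural factorization in Proposition~\ref{prop:map}.

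\textbf{The $m$-rational case.} Theorem~\ref{pre-Rat-pre-DB} yields that $(X,\Sigma)$ is pre-$m$-Du Bois, and $X$ rational is in particular normal, hence seminormal. Since $(X,\Sigma)$ is $m$-rational, $\Sigma$ is a divisor, so Lemma~\ref{Rat-Div-h0} applies: $\tOmega^p_{X,\Sigma}$ is reflexive (in particular $S_2$) for $p \le m$. This simultaneously gives the weakly $m$-Du Bois condition and the reflexivity condition in Definition~\ref{def-sings}(3). The codimension inequality $\codim_X \Sing(X,\Sigma)\ge 2m+1$ is weaker than the $\ge 2m+2$ built into the $m$-rational hypothesis, so this half is complete.

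\textbf{The strict-$m$-rational case.} For $0 \le p \le m$, I would consider the factorization from Proposition~\ref{prop:map}
\[
\Omega^p_{X,\Sigma} \longrightarrow \tOmega^p_{X,\Sigma} \longrightarrow \uOmega^p_{X,\Sigma} \longrightarrow \bbD_X(\uOmega^{n-p}_X(\log \Sigma)).
\]
The strict-$m$-rational hypothesis says the total composition is a quasi-isomorphism, and the argument in the proof of Theorem~\ref{pre-Rat-pre-DB} (via Lemmas~\ref{Rat-Div-h0} and~\ref{Rat-Div-h0-intrinsic}, using that $X$ is rational) shows the composition of the last two arrows is also a quasi-isomorphism. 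Two-out-of-three forces $\Omega^p_{X,\Sigma} \to \tOmega^p_{X,\Sigma}$ to be a quasi-isomorphism of complexes concentrated in degree zero, hence an isomorphism of sheaves. Composing with the quasi-isomorphism $\tOmega^p_{X,\Sigma} \qis \uOmega^p_{X,\Sigma}$ from pre-$m$-Du Bois yields the strict-$m$-Du Bois conclusion.

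Neither implication presents a real obstacle, since the substantive content has been packaged into Theorem~\ref{pre-Rat-pre-DB} and the reflexivity Lemmas~\ref{Rat-Div-h0}, \ref{Rat-Div-h0-intrinsic}. The only point worth double-checking is that in the strict case, the composition $\tOmega^p_{X,\Sigma} \to \uOmega^p_{X,\Sigma} \to \bbD_X(\uOmega^{n-p}_X(\log \Sigma))$ is genuinely a quasi-isomorphism; this is implicit in the proof of Theorem~\ref{pre-Rat-pre-DB}, because both ends are naturally isomorphic to $\pi_*\Omega^p_{\widetilde{X},\widetilde{\Sigma}}(\log E)$ and the middle quasi-isomorphism comes from the pre-$m$-Du Bois conclusion already obtained.
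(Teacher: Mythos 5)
Your proof is correct and fills in exactly the deduction the paper leaves implicit (the paper marks this corollary \qedsymbol\ with no written proof, treating it as immediate from Theorem~\ref{pre-Rat-pre-DB} together with Lemmas~\ref{Rat-Div-h0} and~\ref{Rat-Div-h0-intrinsic}). Both halves of your argument go through: in the $m$-rational case, reflexivity of $\tOmega^p_{X,\Sigma}$ and the weaker codimension bound are precisely what Definition~\ref{def-sings}(3) requires; in the strict case, the two-out-of-three reduction of the factorization $\Omega^p_{X,\Sigma}\to\tOmega^p_{X,\Sigma}\to\uOmega^p_{X,\Sigma}\to\bbD_X(\uOmega^{n-p}_X(\log\Sigma))$ is exactly the right move, and your observation that strict-$m$-rational implies pre-$m$-rational (so that Theorem~\ref{pre-Rat-pre-DB} and the identification of $\ot$ with $\bbD_X(\uOmega^{n-p}_X(\log\Sigma))$ via $\pi_*\Omega^p_{\widetilde{X},\widetilde{\Sigma}}(\log E)$ apply) is correct and worth stating.
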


Next we prove that being pre-$m$-Du Bois pair decends under finite maps. First we note the following consequence of \cite[Theorem 1.1]{kim2025traceduboiscomplex}.

\begin{theorem} \label{G-splitting}
    Let $(Y, \Delta)$ be a reduced pair. Assume $Y$ and $\Delta$ have compatible $G$ actions where $G$ is a finite group. Let $Y = X / G$ and $\Sigma = \Delta / G$. The composition of morphisms

    $$ \uOmega_{X,\Sigma}^\kdot \rightarrow Rf_* \uOmega_{Y,\Delta}^\kdot \rightarrow R\Gamma^G Rf_* \uOmega_{Y,\Delta}^\kdot $$
    is an isomorphism.
\end{theorem}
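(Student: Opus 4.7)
The plan is to reduce to the non-pair version of the trace splitting, namely \cite[Theorem 1.1]{kim2025traceduboiscomplex} applied separately to the two quotient maps $f:Y\to X$ and $f|_\Delta: \Delta \to \Sigma$ (using the compatibility of the $G$-actions so that the restriction of $f$ to $\Delta$ is again a finite quotient). Applying that result gives quasi-isomorphisms
\[
\uOmega^\kdot_X \qis R\Gamma^G Rf_* \uOmega^\kdot_Y, \qquad \uOmega^\kdot_\Sigma \qis R\Gamma^G Rf_* \uOmega^\kdot_\Delta,
\]
via the canonical composition maps in each case.

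The next step is to assemble these into a map of distinguished triangles, using the defining triangles for the Du Bois complex of a pair
\[
\uOmega^\kdot_{X,\Sigma}\to \uOmega^\kdot_X \to \uOmega^\kdot_\Sigma \xrightarrow{+1}, \qquad
\uOmega^\kdot_{Y,\Delta}\to \uOmega^\kdot_Y \to \uOmega^\kdot_\Delta \xrightarrow{+1}.
\]
Since $Rf_*$ and $R\Gamma^G$ are both triangulated (the latter being the derived invariants functor for a finite group, hence an exact functor on $D(\bbC[G])$-modules), applying $R\Gamma^G Rf_*(-)$ to the second triangle produces another distinguished triangle, and by functoriality of the canonical trace morphism we obtain a morphism of triangles whose right two vertical arrows are precisely the quasi-isomorphisms recalled above.

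By the usual triangulated five-lemma, the third vertical arrow
\[
\uOmega^\kdot_{X,\Sigma}\to R\Gamma^G Rf_* \uOmega^\kdot_{Y,\Delta}
\]
is then also a quasi-isomorphism, which is the desired conclusion. The main point to verify carefully is that the composition map in the statement is really the one induced from the non-pair trace on $(Y,\Delta)$ and $(X,\Sigma)$ at the level of the defining triangles; this is a naturality/functoriality check for the trace map of \cite{kim2025traceduboiscomplex} with respect to the closed embedding $\Delta\hookrightarrow Y$ (equivariantly) and $\Sigma\hookrightarrow X$, and is the only substantive point in the argument. Once this compatibility is in place, everything else is formal from the behavior of exact triangles under exact functors.
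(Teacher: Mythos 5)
Your proof is correct and is essentially identical to the paper's argument: both apply the non-pair trace theorem of Kim to the quotient maps on $Y$ and on $\Delta$ to identify the middle and right columns of the map between defining triangles, and then deduce the claim for the left column by functoriality of the triangles (five lemma). The one point you flag as "the only substantive point," namely that the composite in the statement is the map induced by the non-pair traces, is exactly what the paper packages into drawing the $3\times 3$ diagram of compatible morphisms of triangles.
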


\begin{proof}
    This follows directly from applying \cite[Theorem 1.1]{kim2025traceduboiscomplex} to the right two columns of the diagram

\begin{equation*}\begin{tikzcd}[cramped, column sep=scriptsize]
\uOmega_{X,\Sigma}^\kdot \ar[r] \ar[d] & 
\uOmega_X^\kdot \ar[r] \ar[d] & 
\uOmega_\Sigma^\kdot \ar[r,"+1"] \ar[d] & {} \\
Rf_* \uOmega_{Y,\Delta}^\kdot \ar[r] \ar[d] & 
Rf_* \uOmega_{Y}^\kdot \ar[r] \ar[d] & 
Rf_* \uOmega_{\Delta}^\kdot \ar[r,"+1"] \ar[d] & {} \\
R\Gamma^G Rf_* \uOmega_{Y,\Delta}^\kdot \ar[r] & 
R\Gamma^G Rf_* \uOmega_{Y}^\kdot \ar[r] & 
R\Gamma^G Rf_* \uOmega_{\Delta}^\kdot \ar[r,"+1"] & {} \\
\end{tikzcd}\end{equation*}
\end{proof}

\begin{theorem} \label{normal-splitting}
    Let $(X, \Sigma)$ be a reduced pair with $X$ normal and let $f: Y \rightarrow X$ be a surjective finite map. The morphism

    $$ \uOmega_{X,\Sigma}^\kdot \rightarrow Rf_* \uOmega_{Y,f^{-1}\Sigma}^\kdot $$
splits.
\end{theorem}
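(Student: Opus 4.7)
The plan is to reduce to the Galois situation and apply Theorem~\ref{G-splitting}, using the elementary fact that if a composition of morphisms in $D(X)$ admits a left inverse, then so does its first factor.

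I would first reduce to the case where $Y$ is normal. Let $\nu : Y^{\nu} \to Y$ be the normalization and set $f^{\nu} := f \circ \nu$, so $(f^{\nu})^{-1}\Sigma = \nu^{-1}(f^{-1}\Sigma)$. By functoriality of the Du Bois complex of pairs applied to the map of pairs $(Y^{\nu}, (f^{\nu})^{-1}\Sigma) \to (Y, f^{-1}\Sigma)$, the map of the theorem sits in a factorization
\[
\uOmega^\kdot_{X,\Sigma} \longrightarrow Rf_* \uOmega^\kdot_{Y, f^{-1}\Sigma} \longrightarrow Rf^{\nu}_* \uOmega^\kdot_{Y^{\nu}, (f^{\nu})^{-1}\Sigma}.
\]
A left inverse for the composition yields, by post-composition with the second arrow, a left inverse for the first. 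Hence it suffices to prove the statement for $f^{\nu}: Y^{\nu} \to X$, and we may assume $Y$ is normal.

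Next I would pass to a Galois closure: choose a finite surjective morphism $g: Z \to X$ with $Z$ normal, factoring as $Z \to Y \to X$, and such that a finite group $G$ acts on $Z$ with $X = Z/G$. Such a $Z$ is constructed as the normalization of $X$ in a Galois closure of $K(Y)/K(X)$ (treating irreducible components of $Y$ separately if needed); normality of $X$ ensures $Z/G = X$ scheme-theoretically. Setting $\Delta := (fg)^{-1}\Sigma$ with its reduced structure, $\Delta$ is a reduced $G$-invariant closed subscheme of $Z$, and $\Delta/G = \Sigma$ because $\Sigma$ is reduced and $X = Z/G$. Applying Theorem~\ref{G-splitting} to $(Z, \Delta)$ with its compatible $G$-action produces a left inverse to
\[
\uOmega^\kdot_{X,\Sigma} \longrightarrow R(fg)_* \uOmega^\kdot_{Z,\Delta},
\]
which by functoriality factors through $Rf_* \uOmega^\kdot_{Y, f^{-1}\Sigma}$. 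Composing that left inverse with the second factor gives the desired left inverse to $\uOmega^\kdot_{X,\Sigma} \to Rf_* \uOmega^\kdot_{Y, f^{-1}\Sigma}$.

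The main obstacle is not conceptual depth but careful bookkeeping: checking that the factorizations in both reduction steps are genuinely those coming from functoriality of $\uOmega^\kdot_{-,-}$ (so that the ``left inverse of a composition'' trick is valid), and verifying the identification $\Delta/G = \Sigma$ as reduced closed subschemes so that Theorem~\ref{G-splitting} applies as stated. Handling possibly reducible $Y$ in the Galois closure step requires a little extra care but is standard over $\mathbb{C}$.
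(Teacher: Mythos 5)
Your proof is correct and takes essentially the same route as the paper: pass to a Galois closure by taking the integral closure of $X$ in a Galois closure of the function-field extension $K(X) \hookrightarrow K(Y)$, apply Theorem~\ref{G-splitting}, and conclude by observing that the first factor of a split composition itself splits. The only deviation is your preliminary reduction to $Y$ normal, which is harmless but unnecessary since the Galois closure $Z$ is automatically normal and the factorization through $Rf_*\uOmega^\kdot_{Y,f^{-1}\Sigma}$ is available regardless.
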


\begin{proof}
    First note that if we can find a finite map $g: Y' \rightarrow Y$ such that the theorem holds for $h = g \circ f$, then we are done since if the composition

    $$ \uOmega_{X,\Sigma}^\kdot \rightarrow Rf_* \uOmega_{Y,f^{-1}\Sigma}^\kdot \rightarrow Rh_*\uOmega_{Y',h^{-1}\Sigma}^\kdot $$
    splits then the first map splits. Let $L$ be the Galois closure of the field extension $K(X) \rightarrow K(Y)$. Now we let $Y'$ be the integral closure of $X$ in $L$. Then we have $X = Y' / G$ where $G = \text{Gal}(L / K(X))$ is finite. Now the theorem follows from $\ref{G-splitting}$.
\end{proof}

\begin{corollary} \label{cor-finite-map}
    Let $(X, \Sigma)$ be a reduced pair with $X$ normal and let $f: Y \rightarrow X$ be a surjective finite map. If $(Y, f^{-1} \Sigma)$ is pre-$m$-Du Bois then $(X, \Sigma)$ is pre-$m$-Du Bois.
\end{corollary}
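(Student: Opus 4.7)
The plan is to deduce Corollary \ref{cor-finite-map} directly from Theorem \ref{normal-splitting} by passing to the graded pieces of the Hodge filtration and exploiting the finiteness of $f$. First I would apply Theorem \ref{normal-splitting} to obtain a splitting of the functorial morphism of filtered Du Bois complexes
\[
\uOmega^\kdot_{X,\Sigma} \to Rf_*\uOmega^\kdot_{Y, f^{-1}\Sigma}.
\]
The key observation is that both this map and the splitting constructed via the Galois closure and averaging argument of Theorem \ref{G-splitting} are morphisms of filtered objects in $D_{\rm filt}(X)$, since they come from functoriality of the Du Bois complex of a pair (see \cite[\S 6]{Kol13}) and from the trace/invariants map of \cite{kim2025traceduboiscomplex} applied to the filtered complex. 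Consequently, the splitting descends to each graded piece.

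Next I would pass to the $p$-th associated graded quotient (shifted by $p$), which on the left yields $\uOmega^p_{X,\Sigma}$. Because $f$ is finite, $Rf_* = f_*$ is exact, and commutes with taking graded pieces, so on the right we obtain $f_*\uOmega^p_{Y, f^{-1}\Sigma}$. Thus for every $p$, the complex $\uOmega^p_{X,\Sigma}$ is a direct summand of $f_*\uOmega^p_{Y,f^{-1}\Sigma}$ in $D(X)$.

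Finally, I would apply the pre-$m$-Du Bois hypothesis on $(Y, f^{-1}\Sigma)$: for all $p \le m$ and all $i > 0$, $h^i(\uOmega^p_{Y, f^{-1}\Sigma}) = 0$. Since $f_*$ is exact on coherent sheaves (as $f$ is finite), $h^i$ commutes with $f_*$, giving $h^i(f_*\uOmega^p_{Y, f^{-1}\Sigma}) = f_* h^i(\uOmega^p_{Y, f^{-1}\Sigma}) = 0$. Being a direct summand of a complex with vanishing positive cohomology sheaves, $\uOmega^p_{X,\Sigma}$ also satisfies $h^i(\uOmega^p_{X,\Sigma}) = 0$ for all $i > 0$ and $p \le m$, proving that $(X, \Sigma)$ is pre-$m$-Du Bois.

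The main obstacle is the compatibility of the splitting of Theorem \ref{normal-splitting} with the Hodge filtration. However, this is built into the construction: the trace map of \cite{kim2025traceduboiscomplex} is defined on the filtered Du Bois complex, and the three-term-diagram argument used in the proofs of Theorems \ref{G-splitting} and \ref{normal-splitting} takes place in $D_{\rm filt}$, so the filtered structure is preserved throughout, making the passage to graded pieces legitimate.
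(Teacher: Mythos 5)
Your proof is correct, and it takes a genuinely different (and simpler) route than the paper in the final step. Both arguments begin the same way: apply Theorem \ref{normal-splitting}, treat the resulting splitting as filtered (the trace map of \cite{kim2025traceduboiscomplex} respects the filtration, as you note), and pass to the $p$-th graded piece, obtaining that $\uOmega_{X,\Sigma}^p$ is a direct summand of $f_*\uOmega_{Y,f^{-1}\Sigma}^p$ in $D(X)$. At that point the two proofs diverge. The paper takes $h^0$ to get a splitting of $\tOmega_{X,\Sigma}^p \to f_*\tOmega_{Y,f^{-1}\Sigma}^p$, observes that it factors through $\tOmega_{X,\Sigma}^p \to \uOmega_{X,\Sigma}^p$, and invokes the splitting criterion Theorem \ref{splits} --- which in turn rests on the injectivity Theorem \ref{inj-thm}, the main technical engine of the paper. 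You instead observe directly that $h^i$ is additive, so a direct summand of a complex with vanishing $h^i$ for $i>0$ also has vanishing $h^i$ for $i>0$; combined with exactness of $f_*$ and the pre-$m$-Du Bois hypothesis on $(Y, f^{-1}\Sigma)$, this immediately yields the required vanishing. Your route is shorter, more elementary, and does not appeal to the injectivity theorem at all; the paper's route is arguably more systematic since it reuses the same criterion (\ref{splits}) that drives the other descent results, but for this particular corollary the direct summand argument suffices. One small caveat you handled correctly but is worth flagging: the definition of pre-$m$-Du Bois in Definition \ref{def-sings}(1) as printed says $i \ge 0$, but the intended condition (consistent with how it is used throughout, e.g.\ in Theorem \ref{thm-hyperplane}) is $i > 0$; your argument targets exactly that.
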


\begin{proof}
    Let $p \le m$. Taking the $p^{th}$ graded piece of the map in \ref{normal-splitting} shows that

    $$ \uOmega_{X,\Sigma}^p \rightarrow f_* \uOmega_{Y,f^{-1}\Sigma}^p $$
    splits. Taking cohomology, we get that

    $$ \tOmega_{X,\Sigma}^p \rightarrow f_* \tOmega_{Y,f^{-1}\Sigma}^p $$
    splits. We have a factorization

    $$ \tOmega_{X,\Sigma}^p \rightarrow \uOmega_{X,\Sigma}^p \rightarrow f_* \uOmega_{Y,f^{-1}\Sigma}^p \cong f_* \tOmega_{Y,f^{-1}\Sigma}^p $$

    Thus the statement follows from \ref{splits}.
\end{proof}

%\newpage

\appendix
\section{Proof of the Injectivity Theorem} \label{proof-inj-thm}

In this appendix, we prove Theorem \ref{inj-thm}. We follow the same strategy as the proof of \cite[Theorem 1.1]{Kov25}. To avoid repetition, we only give the saliant points and refer the reader to \textit{loc cit} for additional details.

Let $\scrL$ be a semiample line bundle on a complex variety $X$. Let $s \in H^0(X,\scrL^n)$ be a general section for some $n >> 0$ and let $H = (s=0)$. Let $0 < i < n$.

\begin{remark}
The technical issue we face is that
\begin{center}
    \begin{tikzcd}
\widetilde{\Omega}^p_{X,\Sigma} \otimes \scrL^{-i} \arrow[r] & \widetilde{\Omega}^p_{X,\Sigma}(\log H) \otimes \scrL^{-i} \arrow[r] & \widetilde{\Omega}^{p-1}_{H,\Sigma_H} \otimes \scrL^{-i}
\end{tikzcd}
\end{center}
is not exact. We would like to be able to just replace $\widetilde{\Omega}^p_{X,\Sigma} \otimes \scrL^{-i}$ with the cone of the right morphism, but this object does not satisfy enough compatibilities to be useful. So instead, we define objects $G^p_{X,\Sigma}(\scrL^{-i})$ and $G^p_{X,\Sigma,H}(\scrL^{-i})$ of $D^b_{coh}(X)$ to replace both the left and middle objects of the above triangle, see \ref{G-dia2}.
    
\end{remark}

Consider the diagram,

\begin{equation} \label{G-def}
\begin{tikzcd}
\widetilde{\Omega}^p_{X,\Sigma}(\log H)[-p] \arrow[r] \arrow[d] & \ftl \arrow[r] \arrow[d] & {\ftl[X][\Sigma][p-1]} \arrow[r, "+1"] \arrow[d] & {} \\
\underline{\Omega}^p_{X,\Sigma}(\log H)[-p] \arrow[r] & \ful \arrow[r] & {\ful[X][\Sigma][p-1]} \arrow[r, "+1"] & {}
\end{tikzcd}
\end{equation}

Define $G^p_{X,\Sigma,H}(\scrL^{-i}) = \textup{Cone}\left( \ftl \otimes \scrL^{-i} \rightarrow {\ful[X][\Sigma][p-1]} \otimes \scrL^{-i} \right)[p-1]$ where the morphism is the composition from \ref{G-def}. We obtain a diagram,

\begin{equation} \label{G-dia1}
\begin{tikzcd}
\widetilde{\Omega}^p_{X,\Sigma}(\log H) \otimes \scrL^{-i} [-p]  \arrow[r] \arrow[d, dashed] & \ftl \otimes \scrL^{-i} \arrow[r] \arrow[d, "id"] & {\ftl[X][\Sigma][p-1]} \otimes \scrL^{-i} \arrow[r, "+1"] \arrow[d] & {} \\
G^p_{X,\Sigma,H}(\scrL^{-i})[-p] \arrow[r] \arrow[d, dashed] & \ftl \otimes \scrL^{-i} \arrow[r] \arrow[d] & {\ful[X][\Sigma][p-1]} \otimes \scrL^{-i} \arrow[r, "+1"] \arrow[d, "id"] & {} \\
\underline{\Omega}^p_{X,\Sigma}(\log H) \otimes \scrL^{-i} [-p] \arrow[r] & \ful \otimes \scrL^{-i} \arrow[r] & {\ful[X][\Sigma][p-1]} \otimes \scrL^{-i} \arrow[r, "+1"] & {}
\end{tikzcd}
\end{equation}

So we can fill in the dashed arrows. Now consider the triangle,

\begin{equation}
\begin{tikzcd}
\underline{\Omega}^p_{X,\Sigma} \otimes \scrL^{-i} \arrow[r] & \underline{\Omega}^p_{X,\Sigma}(\log H) \otimes \scrL^{-i} \arrow[r] & \underline{\Omega}^{p-1}_{H,\Sigma_H} \otimes \scrL^{-i} \arrow[r, "+1"] & {}
\end{tikzcd}
\end{equation}

We define $G^p_{X,\Sigma}(\scrL^{-i}) = \textup{Cone}\left( G^p_{X,\Sigma,H}(\scrL^{-i}) \rightarrow \underline{\Omega}^{p-1}_{H,\Sigma_H} \otimes \scrL^{-i} \right)[-1]$. Thus we have a diagram,

\begin{equation} \label{G-dia2}
\begin{tikzcd}
\widetilde{\Omega}^p_{X,\Sigma} \otimes \scrL^{-i} \arrow[d, dashed] \arrow[r] & \widetilde{\Omega}^p_{X,\Sigma}(\log H) \otimes \scrL^{-i} \arrow[r] \arrow[d] & \widetilde{\Omega}^{p-1}_{H,\Sigma_H} \otimes \scrL^{-i} \arrow[d] & {} \\
G^p_{X,\Sigma}(\scrL^{-i}) \arrow[r] \arrow[d, dashed] & G^p_{X,\Sigma,H}(\scrL^{-i}) \arrow[r] \arrow[d] & \underline{\Omega}^{p-1}_{H,\Sigma_H} \otimes \scrL^{-i} \arrow[r, "+1"] \arrow[d, "id"] & {} \\
\underline{\Omega}^p_{X,\Sigma} \otimes \scrL^{-i} \arrow[r] & \underline{\Omega}^p_{X,\Sigma}(\log H) \otimes \scrL^{-i} \arrow[r] & \underline{\Omega}^{p-1}_{H,\Sigma_H} \otimes \scrL^{-i} \arrow[r, "+1"] & {}
\end{tikzcd}
\end{equation}
We can fill in the top dashed arrow since the composition $\widetilde{\Omega}^p_{X,\Sigma}\to \underline{\Omega}^{p-1}_{H,\Sigma_H} \otimes \scrL^{-i}$ is $0$. We can once again fill in the bottom dashed arrows since the bottom two rows are exact.

\begin{lemma} \label{m-1-iso}
    If $(X, \Sigma)$ is pre-$(m-1)$-Du Bois then $G^p_{X,\Sigma,H}(\scrL^{-i}) \cong \widetilde{\Omega}^p_{X,\Sigma}(\log H) \otimes \scrL^{-i}$ and $G^p_{X,\Sigma}(\scrL^{-i})|_{X \setminus H} \cong \underline{\Omega}^p_{X,\Sigma} \otimes \scrL^{-i}|_{X \setminus H}$ for all $p \le m$.
\end{lemma}
\begin{proof}
    This follows by the top two rows of \ref{G-dia1} and \ref{G-dia2} along with Lemma \ref{lem-premDB-hyperplane}.
\end{proof}

\begin{theorem} \label{G-surj}
Let $X$ be a proper complex variety with a semiample line bundle $\scrL$. For $0 < i < n$, the following maps are surjective:

\begin{enumerate}
    \item $\HH^q(X, G^p_{X,\Sigma,H}(\scrL^{-i})) \rightarrow \HH^q(X, \underline{\Omega}^p_{X,\Sigma}(\log H) \otimes \scrL^{-i})$
    \item $\HH^q(X, G^p_{X,\Sigma}(\scrL^{-i})) \rightarrow \HH^q(X, \underline{\Omega}^p_{X,\Sigma} \otimes \scrL^{-i})$
\end{enumerate}
\end{theorem}

\begin{proof}
    The bottom two rows of \ref{G-dia1} induce 
\vspace{4mm}

\adjustbox{scale=0.80,center}{%
\begin{tikzcd}[column sep=tiny]
\HH^{q+p-1}(X,{\ful[X][\Sigma][p-1]} \otimes \scrL^{-i}) \arrow[r] \arrow[d, "id"] & \HH^q(X,G^p_{X,\Sigma,H}(\scrL^{-i})) \arrow[r] \arrow[d] & \HH^{q+p}(X, \ftl \otimes \scrL^{-i}) \arrow[r] \arrow[d] & \HH^{q+p}(X,{\ful[X][\Sigma][p-1]} \otimes \scrL^{-i}) \arrow[d, "id"] \\
\HH^{q+p-1}(X,{\ful[X][\Sigma][p-1]} \otimes \scrL^{-i}) \arrow[r] & \HH^{q}(X,\underline{\Omega}^p_{X,\Sigma}(\log H) \otimes \scrL^{-i}) \arrow[r] & \HH^{q+p}(X,\ful \otimes \scrL^{-i}) \arrow[r] & \HH^{q+p}(X,{\ful[X][\Sigma][p-1]} \otimes \scrL^{-i})
\end{tikzcd}
} 
\vspace{4mm}
    (1) now follows by \ref{prop_surjecitivty_cover} and the 4-lemma. 

    The bottom two rows of \ref{G-dia2} induce
\vspace{4mm}

\adjustbox{scale=0.80,center}{%
\begin{tikzcd}[column sep=tiny]
\HH^{q-1}(X,\underline{\Omega}^{p-1}_{H,\Sigma_H} \otimes \scrL^{-i}) \arrow[r] \arrow[d, "id"] & \HH^q(X,G^p_{X,\Sigma}(\scrL^{-i})) \arrow[r] \arrow[d] & \HH^{q}(X, G^p_{X,\Sigma, H}(\scrL^{-i})) \arrow[r] \arrow[d] & \HH^{q}(X,\underline{\Omega}^{p-1}_{H,\Sigma_H} \otimes \scrL^{-i}) \arrow[d, "id"] \\
\HH^{q-1}(X,\underline{\Omega}^{p-1}_{H,\Sigma_H} \otimes \scrL^{-i}) \arrow[r] & \HH^{q}(X,\underline{\Omega}^p_{X,\Sigma} \otimes \scrL^{-i}) \arrow[r] & \HH^{q}(X,\underline{\Omega}^p_{X,\Sigma}(\log H) \otimes \scrL^{-i}) \arrow[r] & \HH^{q}(X,\underline{\Omega}^{p-1}_{H,\Sigma_H} \otimes \scrL^{-i})
\end{tikzcd}
} \vspace{4mm}
    Now part (1) and the 4-lemma imply (2).
\end{proof}

\begin{proof}[Proof of Theorem \ref{inj-thm}]

    From here the proof follows exactly the same as the proof of \cite[Theorem 9.1]{Kov25}. To avoid repetition, we will do the simpler case where we assume $X$ is projective. We hope this simpler case can serve as a guide for readers interested in the proof of the general case.

    Let $\mathscr{K}$ be the kernel of the map

    $$ h^i( \bbD_X(\ou)) \rightarrow h^i (\bbD_X(\ot)) $$
    Let $Z = \textrm{Supp } \mathscr{K}$. The first step of the proof is a hyperplane cut down argument to show that $\dim Z = 0$. This part is the same as in \cite[Theorem 9.1]{Kov25} so we omit it.

    Now choose a very ample line bundle $\scrL$ such that
    \begin{equation} \label{coh-vanishing}
        H^j(X, h^i(\bbD_X(\ou)) \otimes \scrL) = 0
    \end{equation}
    for all $i$ and $j > 0$. Now choose $n >> 0$ and a general section $s \in H^0(X, \scrL^n)$ to define $H$ and $\ugt$ as above. Let $t \ne 0 \in H^0(X, \mathscr{K} \otimes \scrL)$ be a section. We may also consider $t$ as a element of $H^0(X, h^i( \bbD_X(\ou) \otimes \scrL))$. Now consider the map
    $$ \mathbb{H}^i(X, \bbD_X(\ou) \otimes \scrL) \hookrightarrow \mathbb{H}^i(X, \bbD_X(\ugt) \otimes \scrL) $$
    which is injective by Grothendieck duality applied to the map in Theorem \ref{G-surj} (2).
    The left side is equal to $H^0(X, h^i( \bbD_X(\ou) \otimes \scrL))$ by \ref{coh-vanishing}. Now we apply \cite[Proposition 2.3, \href{https://arxiv.org/abs/2505.09912v3}{version 3}]{Kov25} to get that the map

    $$ H^0(X, h^i(\bbD_X(\ou) \otimes \scrL)) \overset{\gamma}{\hookrightarrow} H^0(X, h^i(\bbD_X(\ugt) \otimes \scrL)) $$
    is injective. We have that $\ugt{|_{X \setminus H}} \cong \ot{|_{X \setminus H}}$ by Lemma \ref{m-1-iso}. Therefore $\gamma(t)_{|V} = 0$ by the definition of $\mathscr{K}$. Thus $\gamma(t)$ is supported on $H$. Since $t$ is supported on $Z$, $\gamma(t)$ is supported on $Z$. Since $Z$ is finite and $H$ is general, $Z \cap H = \varnothing$. Therefore $\gamma(t) = 0$ contradicting the injectivity of $\gamma$.
\end{proof}

\bibliographystyle{alpha} 
\bibliography{ref}

@article {Art66,
    AUTHOR = {Artin, Michael},
     TITLE = {On isolated rational singularities of surfaces},
   JOURNAL = {Amer. J. Math.},
  FJOURNAL = {American Journal of Mathematics},
    VOLUME = {88},
      YEAR = {1966},
     PAGES = {129--136},
      ISSN = {0002-9327,1080-6377},
   MRCLASS = {14.18 (14.20)},
  MRNUMBER = {199191},
MRREVIEWER = {P.\ Du Val},
       DOI = {10.2307/2373050},
       URL = {https://doi.org/10.2307/2373050},
}

@misc{FL22,
      title={Higher Du Bois and higher rational singularities}, 
      author={Robert Friedman and Radu Laza},
      year={2022},
      eprint={2205.04729},
      archivePrefix={arXiv},
      primaryClass={math.AG}
}

@article {JKSY21,
    AUTHOR = {Jung, Seung-Jo and Kim, In-Kyun and Saito, Morihiko and Yoon,
              Youngho},
     TITLE = {Higher {D}u {B}ois singularities of hypersurfaces},
   JOURNAL = {Proc. Lond. Math. Soc. (3)},
  FJOURNAL = {Proceedings of the London Mathematical Society. Third Series},
    VOLUME = {125},
      YEAR = {2022},
    NUMBER = {3},
     PAGES = {543--567},
      ISSN = {0024-6115,1460-244X},
   MRCLASS = {14B05 (14F10 14F17 32S25)},
  MRNUMBER = {4480883},
MRREVIEWER = {Jihao\ Liu},
       DOI = {10.1112/plms.12464},
       URL = {https://doi.org/10.1112/plms.12464},
}

@incollection {Kol97,
    AUTHOR = {Koll\'{a}r, J\'{a}nos},
     TITLE = {Singularities of pairs},
 BOOKTITLE = {Algebraic geometry---{S}anta {C}ruz 1995},
    SERIES = {Proc. Sympos. Pure Math.},
    VOLUME = {62, Part 1},
     PAGES = {221--287},
 PUBLISHER = {Amer. Math. Soc., Providence, RI},
      YEAR = {1997},
      ISBN = {0-8218-0894-X; 0-8218-0493-6},
   MRCLASS = {14E30 (14E15 32S40)},
  MRNUMBER = {1492525},
MRREVIEWER = {Alessio\ Corti},
       DOI = {10.1090/pspum/062.1/1492525},
       URL = {https://doi.org/10.1090/pspum/062.1/1492525},
}

@book {Kol13,
    AUTHOR = {Koll\'{a}r, J\'{a}nos},
     TITLE = {Singularities of the minimal model program},
    SERIES = {Cambridge Tracts in Mathematics},
    VOLUME = {200},
      NOTE = {With a collaboration of S\'{a}ndor Kov\'{a}cs},
 PUBLISHER = {Cambridge University Press, Cambridge},
      YEAR = {2013},
     PAGES = {x+370},
      ISBN = {978-1-107-03534-8},
   MRCLASS = {14E30 (14B05)},
  MRNUMBER = {3057950},
MRREVIEWER = {Tommaso\ De Fernex},
       DOI = {10.1017/CBO9781139547895},
       URL = {https://doi.org/10.1017/CBO9781139547895},
}

@article {Kov11,
    AUTHOR = {Kov\'{a}cs, S\'{a}ndor J.},
     TITLE = {Du {B}ois pairs and vanishing theorems},
   JOURNAL = {Kyoto J. Math.},
  FJOURNAL = {Kyoto Journal of Mathematics},
    VOLUME = {51},
      YEAR = {2011},
    NUMBER = {1},
     PAGES = {47--69},
      ISSN = {2156-2261,2154-3321},
   MRCLASS = {14F17 (14B05 14J17)},
  MRNUMBER = {2784747},
MRREVIEWER = {Nero\ Budur},
       DOI = {10.1215/0023608X-2010-020},
       URL = {https://doi.org/10.1215/0023608X-2010-020},
}

@Unpublished{Kov25,
    author       = {Kov\'{a}cs, S\'{a}ndor J.},
    title        = {Complexes of differential forms and singularities: the injectivity theorem},
    year         = {2025},
    note         = {preprint},
}

@article {MOPW21,
    AUTHOR = {Musta\c{t}\u{a}, Mircea and Olano, Sebasti\'{a}n and Popa,
              Mihnea and Witaszek, Jakub},
     TITLE = {The {D}u {B}ois complex of a hypersurface and the minimal
              exponent},
   JOURNAL = {Duke Math. J.},
  FJOURNAL = {Duke Mathematical Journal},
    VOLUME = {172},
      YEAR = {2023},
    NUMBER = {7},
     PAGES = {1411--1436},
      ISSN = {0012-7094,1547-7398},
   MRCLASS = {14D07 (14B05 14F10 14F17 32S35)},
  MRNUMBER = {4583654},
       DOI = {10.1215/00127094-2022-0074},
       URL = {https://doi.org/10.1215/00127094-2022-0074},
}

@incollection {ST08,
    AUTHOR = {Schwede, Karl and Takagi, Shunsuke},
     TITLE = {Rational singularities associated to pairs},
      NOTE = {Special volume in honor of Melvin Hochster},
   JOURNAL = {Michigan Math. J.},
  FJOURNAL = {Michigan Mathematical Journal},
    VOLUME = {57},
      YEAR = {2008},
     PAGES = {625--658},
      ISSN = {0026-2285,1945-2365},
   MRCLASS = {14B05 (14E30)},
  MRNUMBER = {2492473},
MRREVIEWER = {Marko\ Roczen},
       DOI = {10.1307/mmj/1220879429},
       URL = {https://doi.org/10.1307/mmj/1220879429},
}

@misc{SVV23,
      title={On $k$-Du Bois and $k$-rational singularities}, 
      author={Wanchun Shen and Sridhar Venkatesh and Anh Duc Vo},
      year={2023},
      eprint={2306.03977},
      archivePrefix={arXiv},
      primaryClass={math.AG}
}

@incollection {Ste83,
    AUTHOR = {Steenbrink, J. H. M.},
     TITLE = {Mixed {H}odge structures associated with isolated
              singularities},
 BOOKTITLE = {Singularities, {P}art 2 ({A}rcata, {C}alif., 1981)},
    SERIES = {Proc. Sympos. Pure Math.},
    VOLUME = {40},
     PAGES = {513--536},
 PUBLISHER = {Amer. Math. Soc., Providence, RI},
      YEAR = {1983},
      ISBN = {0-8218-1466-4},
   MRCLASS = {32G11 (14B05 14B07 14C30 14D05)},
  MRNUMBER = {713277},
MRREVIEWER = {Jerome\ William\ Hoffman},
       DOI = {10.1090/pspum/040.2/713277},
       URL = {https://doi.org/10.1090/pspum/040.2/713277},
}

@book {EV92,
    AUTHOR = {Esnault, H\'el\`ene and Viehweg, Eckart},
     TITLE = {Lectures on vanishing theorems},
    SERIES = {DMV Seminar},
    VOLUME = {20},
 PUBLISHER = {Birkh\"auser Verlag, Basel},
      YEAR = {1992},
     PAGES = {vi+164},
      ISBN = {3-7643-2822-3},
   MRCLASS = {14F17 (14F40 32L10 32L20)},
  MRNUMBER = {1193913},
MRREVIEWER = {Marko\ Roczen},
       DOI = {10.1007/978-3-0348-8600-0},
       URL = {https://doi.org/10.1007/978-3-0348-8600-0},
}

@article {KS21,
    AUTHOR = {Kebekus, Stefan and Schnell, Christian},
     TITLE = {Extending holomorphic forms from the regular locus of a
              complex space to a resolution of singularities},
   JOURNAL = {J. Amer. Math. Soc.},
  FJOURNAL = {Journal of the American Mathematical Society},
    VOLUME = {34},
      YEAR = {2021},
    NUMBER = {2},
     PAGES = {315--368},
      ISSN = {0894-0347,1088-6834},
   MRCLASS = {32S20 (14B05 14B15)},
  MRNUMBER = {4280862},
MRREVIEWER = {Dmitry\ Kerner},
       DOI = {10.1090/jams/962},
       URL = {https://doi.org/10.1090/jams/962},
}

@misc{PS24,
      title={Du Bois complexes of cones over singular varieties, local cohomological dimension, and K-groups}, 
      author={Mihnea Popa and Wanchun Shen},
      year={2024},
      eprint={2406.03593},
      archivePrefix={arXiv},
      primaryClass={math.AG},
      url={https://arxiv.org/abs/2406.03593}, 
}

@article {GR11,
    AUTHOR = {Greb, Daniel and Rollenske, S\"onke},
     TITLE = {Torsion and cotorsion in the sheaf of {K}\"ahler differentials
              on some mild singularities},
   JOURNAL = {Math. Res. Lett.},
  FJOURNAL = {Mathematical Research Letters},
    VOLUME = {18},
      YEAR = {2011},
    NUMBER = {6},
     PAGES = {1259--1269},
      ISSN = {1073-2780,1945-001X},
   MRCLASS = {14F10 (13N05 14B05)},
  MRNUMBER = {2915479},
MRREVIEWER = {Wenliang\ Zhang},
       DOI = {10.4310/MRL.2011.v18.n6.a14},
       URL = {https://doi.org/10.4310/MRL.2011.v18.n6.a14},
}

@article {DB81,
    AUTHOR = {Du Bois, Philippe},
     TITLE = {Complexe de de {R}ham filtr\'e{} d'une vari\'et\'e{}
              singuli\`ere},
   JOURNAL = {Bull. Soc. Math. France},
  FJOURNAL = {Bulletin de la Soci\'et\'e{} Math\'ematique de France},
    VOLUME = {109},
      YEAR = {1981},
    NUMBER = {1},
     PAGES = {41--81},
      ISSN = {0037-9484},
   MRCLASS = {14C30},
  MRNUMBER = {613848},
MRREVIEWER = {J.\ H. M. Steenbrink},
       URL = {http://www.numdam.org/item?id=BSMF_1981__109__41_0},
}

@book {GNPP88,
    AUTHOR = {Guill\'en, F. and {Navarro Aznar}, V. and {Pascual Gainza}, P. and
              Puerta, F.},
     TITLE = {Hyperr\'esolutions cubiques et descente cohomologique},
    SERIES = {Lecture Notes in Mathematics},
    VOLUME = {1335},
      NOTE = {Papers from the Seminar on Hodge-Deligne Theory held in
              Barcelona, 1982},
 PUBLISHER = {Springer-Verlag, Berlin},
      YEAR = {1988},
     PAGES = {xii+192},
      ISBN = {3-540-50023-5},
   MRCLASS = {14F20 (14F40 32G20 32L20)},
  MRNUMBER = {972983},
MRREVIEWER = {Gerhard\ Pfister},
       DOI = {10.1007/BFb0085054},
       URL = {https://doi.org/10.1007/BFb0085054},
    shorthand = {GNPP88}
}

@article {Kov02,
    AUTHOR = {Kov\'acs, S\'andor J.},
     TITLE = {Logarithmic vanishing theorems and {A}rakelov-{P}arshin
              boundedness for singular varieties},
   JOURNAL = {Compositio Math.},
  FJOURNAL = {Compositio Mathematica},
    VOLUME = {131},
      YEAR = {2002},
    NUMBER = {3},
     PAGES = {291--317},
      ISSN = {0010-437X,1570-5846},
   MRCLASS = {14F17 (14J10)},
  MRNUMBER = {1905025},
MRREVIEWER = {Scott\ R.\ Nollet},
       DOI = {10.1023/A:1015592420937},
       URL = {https://doi.org/10.1023/A:1015592420937},
}

@incollection {Kov16a,
    AUTHOR = {Kov\'acs, S\'andor J. and Schwede, Karl},
     TITLE = {Du {B}ois singularities deform},
 BOOKTITLE = {Minimal models and extremal rays ({K}yoto, 2011)},
    SERIES = {Adv. Stud. Pure Math.},
    VOLUME = {70},
     PAGES = {49--65},
 PUBLISHER = {Math. Soc. Japan, [Tokyo]},
      YEAR = {2016},
      ISBN = {978-4-86497-036-5},
   MRCLASS = {14B07 (14B05 14F17 14F18)},
  MRNUMBER = {3617778},
MRREVIEWER = {Jan\ Stevens},
       DOI = {10.2969/aspm/07010049},
       URL = {https://doi.org/10.2969/aspm/07010049},
}

@article {Kov16b,
    AUTHOR = {Kov\'acs, S\'andor J. and Schwede, Karl},
     TITLE = {Inversion of adjunction for rational and {D}u {B}ois pairs},
   JOURNAL = {Algebra Number Theory},
  FJOURNAL = {Algebra \& Number Theory},
    VOLUME = {10},
      YEAR = {2016},
    NUMBER = {5},
     PAGES = {969--1000},
      ISSN = {1937-0652,1944-7833},
   MRCLASS = {14J17 (14B05 14B25 14D06 14E30)},
  MRNUMBER = {3531359},
MRREVIEWER = {I.\ Dolgachev},
       DOI = {10.2140/ant.2016.10.969},
       URL = {https://doi.org/10.2140/ant.2016.10.969},
}

@article {MP25,
    AUTHOR = {Musta\c{t}\u{a}, Mircea and Popa, Mihnea},
     TITLE = {On {$k$}-rational and {$k$}--{D}u {B}ois local complete
              intersections},
   JOURNAL = {Algebr. Geom.},
  FJOURNAL = {Algebraic Geometry},
    VOLUME = {12},
      YEAR = {2025},
    NUMBER = {2},
     PAGES = {237--261},
      ISSN = {2313-1691,2214-2584},
   MRCLASS = {14B05 (14F10 32S35)},
  MRNUMBER = {4869966},
}

@misc{PSV24,
      title={Injectivity and Vanishing for the Du Bois Complexes of Isolated Singularities}, 
      author={Mihnea Popa and Wanchun Shen and Anh Duc Vo},
      year={2024},
      eprint={2409.18019},
      archivePrefix={arXiv},
      primaryClass={math.AG},
      url={https://arxiv.org/abs/2409.18019}, 
}

@article {MV19,
    AUTHOR = {Miller, Claudia and Vassiliadou, Sophia},
     TITLE = {({C}o)torsion of exterior powers of differentials over
              complete intersections},
   JOURNAL = {J. Singul.},
  FJOURNAL = {Journal of Singularities},
    VOLUME = {19},
      YEAR = {2019},
     PAGES = {131--162},
      ISSN = {1949-2006},
   MRCLASS = {14B05 (14F10 32S05)},
  MRNUMBER = {4030581},
MRREVIEWER = {Eliris\ Cristina\ Rizziolli},
       DOI = {10.5427/jsing.2019.19h},
       URL = {https://doi.org/10.5427/jsing.2019.19h},
}

@incollection {Kov13,
    AUTHOR = {Kov\'acs, S\'andor J.},
     TITLE = {Singularities of stable varieties},
 BOOKTITLE = {Handbook of moduli. {V}ol. {II}},
    SERIES = {Adv. Lect. Math. (ALM)},
    VOLUME = {25},
     PAGES = {159--203},
 PUBLISHER = {Int. Press, Somerville, MA},
      YEAR = {2013},
      ISBN = {978-1-57146-258-9},
   MRCLASS = {14B05 (14E30 14H10)},
  MRNUMBER = {3184177},
MRREVIEWER = {Santiago\ Encinas},
}

@misc{stacks-project,
  author       = {The {Stacks project authors}},
  title        = {The Stacks project},
  howpublished = {\url{https://stacks.math.columbia.edu}},
  year         = {2025},
}

@misc{kim2025traceduboiscomplex,
      title={Trace for the Du Bois complex}, 
      author={Hyunsuk Kim},
      year={2025},
      eprint={2507.07350},
      archivePrefix={arXiv},
      primaryClass={math.AG},
      url={https://arxiv.org/abs/2507.07350}, 
}
% \printbibliography

\end{document}